\def\P{{\mathbb P}} 
\def\E{{\mathbb E}}
\def\cA{{\mathcal A}}
\def\cC{{\mathcal C}}
\def\cE{{\mathcal E}}
\def\cF{{\mathcal F}}
\def\cG{{\mathcal G}}
\def\cK{{\mathcal K}}
\def\cS{{\mathcal S}}
\def\cH{{\mathcal H}}
\def\cL{{\mathcal L}}
\def\cN{{\mathcal N}}
\def\cP{{\mathcal P}}
\def\cX{{\mathcal X}}
\def\cY{{\mathcal Y}}
\def\cZ{{\mathcal Z}}
\def\R{\mathbb{R}}
\def\N{\mathbb{N}}
\def\1{\mathbbm{1}}
\def\dd{\mathrm{d}}
\def\ee{\mathrm{e}}
\newcommand{\toL}{\,{\buildrel {\rm d} \over \longrightarrow }\,}
\newcommand{\defeq}{\mathrel{\mathop:}=}
\newcommand{\eqdef}{=\mathrel{\mathop:}}
\newcommand{\defiff}{\mathrel{\mathop:}\iff}
\newtheorem{defn}{Definition}[section]
\newtheorem{thm}[defn]{Theorem}
\newtheorem{prop}[defn]{Proposition}
\newtheorem{lemma}[defn]{Lemma}
\newtheorem{coro}[defn]{Corollary}
\newtheorem{obs}[defn]{Observation}
\newtheorem{example}[defn]{Example}
\theoremstyle{definition}
\theoremstyle{remark}
\newtheorem{remark}[defn]{Remark}
\title{A conditional coalescent for diploid exchangeable population models given the pedigree}
\author{Frederic Alberti\footnote{fralbert@uni-mainz.de}, Matthias Birkner\footnote{birkner@mathematik.uni-mainz.de}, Wai-Tong Louis Fan\footnote{louisfan@unc.edu},  and John Wakeley\footnote{wakeley@fas.harvard.edu}}
\date{\today}
\begin{document}

\maketitle

\begin{abstract}
We study coalescent processes conditional on the population pedigree under the exchangeable diploid bi-parental population model of \citet{BirknerEtAl2018}. While classical coalescent models average over all reproductive histories, thereby marginalizing the pedigree, our work analyzes the genealogical structure embedded within a fixed pedigree generated by the diploid Cannings model. In the large-population limit, we show that these conditional coalescent processes differ significantly from their marginal counterparts when the marginal coalescent process includes multiple mergers. We characterize the limiting process as an inhomogeneous $(\Psi,c)$-coalescent, where $\Psi$ encodes the timing and scale of multiple mergers caused by generations with large individual progeny (GLIPs), and $c$ is a constant rate governing binary mergers. 

Our results reveal fundamental distinctions between quenched (conditional) and annealed (classical) genealogical models, demonstrate how the fixed pedigree structure impacts multi-locus statistics such as the site-frequency spectrum, and have implications for interpreting  patterns of genetic variation among unlinked loci in the genomes of sampled individuals. They significantly extend the results of \citet{DiamantidisEtAl2024}, which considered a sample of size two under a specific Wright-Fisher model with a highly reproductive couple, and those of \citet{TyukinThesis2015}, where Kingman coalescent was the limiting process.
Our proofs adapt coupling techniques from the theory of random walks in random environments.
\end{abstract}

{\small {\bf Keywords:} coalescent theory; population pedigree; genealogy; multiple mergers; ancestral inference 

{\bf AMS MSC 2020:} Primary 60J90, 92D10; Secondary 60K37, 60J95}


\section{Introduction}

Coalescent models describe the sampling properties of genetic data with special reference to the hidden or underlying gene genealogy of the sample.  They are most commonly described as backward-time dual processes of the classic forward-time models of population genetics \citep{Donnelly1986,Ewens1990,Mohle1999}.  Predictions about samples of genetic data are the same under both approaches, though coalescent models are much simpler than forward-time models when there is no selection \citep{Tavare1984,Ewens1990,DonnellyAndKurtz1996a,DonnellyAndKurtz1999}.  The undeniable relevance of the gene genealogy as a latent variable adds to the appeal of coalescent models.  

The initial formal description of the standard neutral coalescent process called the $n$-coalescent \citep{Kingman1982a,Kingman1982b,Kingman1982c} was expressly for haploid organisms with exchangeable distributions of offspring numbers \citep{Cannings1974}. Here as usual $n$ is the sample size. Early biologically-oriented derivations assumed diploidy and bi-parental reproduction \citep{Hudson1983a,Hudson1983b,Tajima1983} under the Wright-Fisher model of reproduction \citep{Fisher1930,Wright1931}.  These implicitly used the notion of effective population size \citep{Wright1931,SjodinEtAl2005} in which a diploid Wright-Fisher population of size $N$ is understood to be equivalent to a haploid population of size $2N$, especially in the limit $N\to\infty$ \citep[section 3.7]{Ewens2004}.  

However, for diploid organisms there is another latent variable which is logically prior to the gene genealogy.  This is the population pedigree or organismal genealogy specifying the outcomes of bi-parental reproduction in every past generation \citep{BallEtAl1990,WollenbergAndAvise1998,WakeleyEtAl2012,Ralph2019}.  Genetic transmission within this pedigree according to Mendel's laws is what produces gene genealogies.    

The classic forward-time models make their predictions by averaging over outcomes of reproduction.  The traditional notion of an effective population size depends on this \citep{DiamantidisEtAl2024}.  The same averaging necessarily occurs in the corresponding backward-time dual coalescent models.  Rigorous treatments make this explicit, in particular by computing coalescent transition probabilities as expectations over the outcomes of reproduction, e.g.\ as in Lemma 3.1 of \citet{MohleAndSagitov2003}.  The resulting predictions about gene genealogies are marginal to all other aspects of the population, including the population pedigree.  The same is true of the numerous extensions of the coalescent approach, for example to cases in which multiple mergers occur in the ancestral process  \citep{DonnellyAndKurtz1999,Pitman1999,Sagitov1999,Schweinsberg2000,MohleAndSagitov2001,Sagitov2003} or when substantial inbreeding affects the gene genealogy \citep{NordborgAndDonnelly1997,Mohle1998a,SeversonEtAl2021,CotterEtAl2021}.  

Understanding how the population pedigree might affect the gene genealogy is a relatively new concern for population genetics.  \citet{TyukinThesis2015} gave the first rigorous account, proving that a broad class of diploid population models which have Kingman's $n$-coalescent as their limiting ancestral process under the traditional approach, i.e.\ marginal to the pedigree, also have the $n$-coalescent as their limiting ancestral process conditional on the pedigree.  \citet{DiamantidisEtAl2024} proved that the analogous statement \textit{does not} hold in general for coalescent processes with multiple mergers, using a simple model of large reproduction events and $n=2$.  \citet{newman2024conditionalgenegenealogiesgiven} did the same for inbreeding, using a simple model of self-fertilization and $n=2$.  Both \citet{DiamantidisEtAl2024} and \citet{newman2024conditionalgenegenealogiesgiven} showed that novel coalescent processes conditional on the pedigree emerge as $N\to\infty$, in which only certain events in the pedigree are retained in the limit.  

Here we consider the coalescent process for a sample of arbitrary size conditional on the pedigree under the exchangeable diploid bi-parental population model of \citet{BirknerEtAl2018}.  Our focus is on cases in which the coalescent process marginal to the pedigree includes (simultaneous) multiple mergers.  The corresponding marginal, pedigree-averaged or `unconditional' results are available in \citet{BirknerEtAl2018}.  Note that since each diploid individual possesses two copies of the genome, these coalescent processes marginal to the pedigree are always $\Xi$-coalescents \citep{Schweinsberg2000,MohleAndSagitov2001,Sagitov2003} rather than $\Lambda$-coalescents \citep{DonnellyAndKurtz1999,Pitman1999,Sagitov1999}.  We find that the resulting coalescent processes conditional on the pedigree, in the large population limit, differ from the corresponding marginal coalescent processes in \citet{BirknerEtAl2018}.  The reason is that the population-level events which lead to multiple-mergers coalescent events are encoded in the pedigree and thus frozen in time.  We show how to account for this, and provide a number of examples including two kinds of conditional diploid Beta coalescents. 

Coalescent processes conditional on the population pedigree are a new type of biologically motivated quenched limit result, analogous to those for random walks in random environments.  See \citet{Zeitouni2006} for a thorough review of the latter.  \citet{HassEtAl2024} provides more recent references as well as a new many-particle diffusion result.  In conditional coalescent processes, the environment is the pedigree generated by a process of reproduction in the population and walks are performed by ancestral genetic lineages according to Mendel's laws.  Whereas numerous properties of random walks in random environments may be of interest, the object of natural interest in conditional coalescent processes is the gene genealogy of a sample of size $n$.   

The method we use to establish convergence towards the quenched limit is to consider a pair of coalescent processes which are conditionally independent given the population pedigree.  This technique was developed for random walks in random environments \citep{BolthausenAndSznitman2002a,BirknerEtAl2013b} and adapted to a coalescent model in \citet{DiamantidisEtAl2024}, in which case the two hypothesized copies of the process have a natural interpretation.  They produce the gene genealogies at two unlinked or independently assorting loci.
Thus our results for the sampling properties of gene genealogies conditional on a shared population pedigree pertain directly to a typical way coalescent models are applied, namely to interpret patterns of genetic variation among loci in the genomes of sampled individuals. 

Understanding how the unobserved gene genealogy of a sample shapes genetic variation at a single locus is foundational to coalescent theory \citep{Donnelly1996}.  This is illustrated by applications to the site-frequency spectrum, which records the numbers $S_i$ of polymorphic sites with a mutant in sample frequency $i/n$ \citep{Tajima1989,FuAndLi1993,BravermanEtAl1995,Fu1995,GriffithsAndTavare1998}.  It is well known, for example, that under the infinite-sites mutation model with no recombination between sites \citep{Watterson1975}, single $n$-coalescent gene genealogies produce site-frequency spectra dramatically different from those expected on average \citep{SlatkinAndHudson1991,SainudiinEtAl2011}.  This can be seen from equations (1.9) and (3.3) in \citet{GriffithsAndTavare1998}, which show that only a fraction $2/(n-1)$ of gene genealogies present any opportunity for mutations in the largest sample frequency class $(n-1)/n$.  

Similarly, it is of interest to understand how population pedigrees affect statistical predictions for multi-locus data. Under the infinite-sites mutation model with free recombination between sites \citep{Fisher1930b,Kimura1969}, it has been standard practice in population genetics to treat each site as if it were an independent random draw from .the expected site-frequency spectrum, specifically using the averaged or annealed model rather than the quenched model.  Examples include \citet{Ewens1974}, \citet{SawyerAndHartl1992}, \citet{Nielsen2000}, \citet{AdamsAndHudson2004}, \citet{CaicedoEtAl2007}, \citet{GutenkunstEtAl2009}, \citet{ExcoffierEtAl2013}, \citet{GaoAndKeinan2016} and \citet{SeplyarskiyEtAl2023}.  The only time this is justified is when the same limit-process holds for the quenched and annealed models, as for example in \citet{TyukinThesis2015}.  In cases where the quenched and annealed limit-processes differ, then not only does the gene genealogy matter for each locus but the population pedigree matters for all loci.  This should be evident in the expected site-frequency spectrum and other measures which depend on the distribution of gene genealogies among loci.

This work is the first to rigorously characterize the full coalescent process conditional on a pedigree for samples of arbitrary size in a general exchangeable diploid population. Our results go beyond prior analyses limited to sample size two or restricted to the Kingman coalescent, introducing a new class of inhomogeneous coalescents that explicitly account for the timing and structure of pedigree-encoded reproductive events. A technical challenge is handling the dense set of potential jump times in the limiting process, which we overcome by introducing an operator on the space of partitions (which we call a coagulator) that gives rise to a stochastic flow for the inhomogeneous coalescent. Our proof is built on a coupling framework adapted from random walks in random environments, using a pairwise coupling of coalescents on a shared pedigree. 

\subsection{Organisation of the paper}
This paper is organised as follows. In Section~\ref{sec:genealogies}, we introduce, in a finite population of size $N$, the general concept of genealogies that are embedded within a given pedigree and driven by Mendelian randomness. Keeping track of which genealogies have or have not coalesced by a certain point backward in time gives rise to a coalescent process. This coalescent, whose law depends on the a-priori fixed realisations of the random pedigree is the primary object of our investigation.

In Section~\ref{sect:setup}, we recall from~\cite{BirknerEtAl2018} the definition of the 
\emph{diploid Cannings model}, which we will use as a model for a random pedigree; in each generation, the number of children of each pair of individuals is captured by an independent and identically distributed random matrix whose entries are jointly exchangeable. In
Subsection~\ref{subsec:limitingcoalescent}, we state our main result.
We argue (on a heuristic level) that, in the large-population limit, the behaviour of the coalescent is primarily characterised by the appearance of large families in what we call GLIPs, 
\emph{generations with large individual progeny}. These are those generations where the offspring of a 
bounded number of highly fecund parent individuals
makes up a nontrivial fraction of the population. In contrast, the contribution of the many small families leads to pair mergers that occur independently between each pair of blocks at a constant rate, in the spirit of the traditional Kingman coalescent. We describe this limiting behaviour by an
\emph{inhomogeneous
$(\Psi,c)$-coalescent}, which is characterised by
\begin{enumerate}[label = \arabic*)]
\item 
a (realisation of a) Poisson point process $\Psi$ containing the information on potential multiple merger events due to large individual offspring numbers and
\item
a constant exponential rate $c$ governing the pair mergers.  
\end{enumerate}
We first give a preliminary definition in the case where the Poisson point process $\Psi$ is discrete (locally finite) in time. The general case, where $\Psi$ is allowed to be dense in time and the inhomogeneous $(\Psi,c)$-coalescent performs multiple mergers at a potentially dense set of times, requires some technical preparation and is postponed to Section~\ref{sec:inhomcoal}.

But first, in Section~\ref{sec:pedigree}, 
we complete our setup by spelling out the formal details of the definition of the pedigree governed by the Cannings model introduced in Section~\ref{sect:setup}. This is done in Subsection~\ref{subsec:condpedigree}.
Naturally, this also leads to a relatively explicit description of the transition probabilities of the associated coalescent, as explained in 
Subsection~\ref{subsec:transitionsconditionalonoffspringnumbers}. In 
Subsection~\ref{subsec:largeoffspring},
we see how this simplifies in the presence of large individual progeny.
To close Section~\ref{sec:pedigree}, we provide a brief outline of the proof of our main result.

A rigorous description of our scaling limit, the inhomogeneous $(\Psi,c)$-coalescent is given in 
Section~\ref{sec:inhomcoal}. More specifically, we give a definition which is compatible with 
$\Psi$ being dense in time. In fact, we do even slightly more and construct a stochastic flow for this process.

In Section~\ref{sec:paircouplings} (specifically in
Subsection~\ref{subsec:annealedtransitionmatrices}), we describe a conditionally independent coupling of two coalescents on the same pedigree, which becomes a Markov process after averaging out the pedigree. It is the central tool for our proof. In Subsection~\ref{subsec:timescalesep}, we
make use of a separation-of-timescales argument from~\cite{Mohle1998a} to show that it suffices to consider a certain `coarse-graining' of the transition matrices of this coupling. The asymptotic behaviour of this coarse-graining is given in Subsection~\ref{subsec:aggregates} in terms of the limiting behaviour of `aggregate' transition probabilities. For the sake of readability, the rather cumbersome computations needed are relegated to Appendix~\ref{app:lemmaproof}.

Section~\ref{sec:examples} provides a few examples. These include  diploid versions of the Wright-Fisher model, models with random individual fitness as well as a model in which a single pair or a single individual produces a large family with a certain probability, while the remaining families are small. We also discuss briefly a `pseudo' two-sex version, where a distinction between two different sexes can be incorporated into our framework by assigning them in a uniform manner in each generation. 

In Section~\ref{sec:data}, we carry out simulations of the site-frequency spectrum and total branch lengths of the coalescent trees in a simple example. In the final Section~\ref{sect:extensions}, we discuss briefly the difficulties associated with simulating concrete realisations of our limiting model and close by providing some outlook to future work. 

\section{Genealogies in fixed pedigrees}
\label{sec:genealogies}

We consider a population consisting of some fixed, deterministic number $N$ of diploid individuals.  Every individual in the population has two distinct parents and inherits a single copy of some chromosome under consideration from each of them. For ease of reference, we will call the two copies of the chromosome the \emph{$0$-chromosome} and the \emph{$1$-chromosome} and the two gene copies on them the
\emph{$0$-gene} and \emph{$1$-gene}. 

Individuals are indexed by $i \in [N] \defeq \{1,2,\ldots,N \}$.  Generations are discrete and non-overlapping, and indexed by $g \in \N_0$, where $g=0$ is the present, $g=1$ is the previous generation, and so on back into the past.  
The parents of individual $i$ in generation $g$ are denoted 
\begin{equation*}
\big({P_{0,i,g}^{(N)},P_{1,i,g}^{(N)}}\big) \in [N]^2
\end{equation*}
and are members of the previous generation $g+1$. Without loss of generality, we suppose the $0$-gene and $1$-gene of individual $i$ in generation $g$ 
are inherited from parents $P_{0,i,g}^{(N)}$ and $P_{1,i,g}^{(N)}$ respectively. We call $P_{0,i,g}^{(N)}$ and $P_{1,i,g}^{(N)}$ the $0$-parent and the $1$-parent of individual $i$ in generation $g$; see Figure \ref{fig1a} for an illustration when $N=6$. Without loss of generality, suppose the individuals (the ovals) have indices 1 to 6 from left to right, and the
$0$-gene and $1$-gene are the two bullets within an oval on the left and the right respectively. Then in Figure \ref{fig1a} we have $(P_{0,1,g}^{(N)},\,P_{1,1,g}^{(N)})=(1,2)$, $(P_{0,2,g}^{(N)},\,P_{1,2,g}^{(N)})=(4,2)$, $(P_{0,3,g}^{(N)},\,P_{1,3,g}^{(N)})=(4,5)$, etc.

\begin{defn} \label{def:pedigrees} 
The map from $[N] \times \N_0$ to $[N]^2$  
\begin{equation*}
\big({P_{0}^{(N)},P_{1}^{(N)}}\big) \defeq \big({P_{0,i,g}^{(N)},P_{1,i,g}^{(N)}}\big)_{i \in [N],g \in \N_0}
\end{equation*}
is called the \emph{population pedigree}. Figure~\ref{fig1a} shows a single generation of a pedigree for $N=6$.
\end{defn}

\begin{figure}
\centering
\begin{subfigure}{0.5\textwidth}
  \begin{tikzpicture}[xscale=1.1, yscale=1.1] 
    \foreach \x in {1,2,3,4,5,6} {
        \filldraw[black] (\x -0.18,0) circle[radius=0.05]
                         (\x +0.18,0) circle[radius=0.05];
        \draw (\x,0) ellipse[x radius=0.35, y radius=0.25];
    }

    \foreach \x in {1,2,3,4,5,6} {
        \draw (\x,1) ellipse[x radius=0.35, y radius=0.25];
    }

    \draw[-] (2-0.18,0) -- (4,0.75); 
    \draw[-] (2+0.18,0) -- (2,0.75); 
    \draw[-] (3-0.18,0) -- (4,0.75); 
    \draw[-] (3+0.18,0) -- (5,0.75); 
    \draw[-] (5-0.18,0) -- (4,0.75); 
    \draw[-] (5+0.18,0) -- (5,0.75); 
    \draw[-] (6-0.18,0) -- (5,0.75); 
    \draw[-] (6+0.18,0) -- (4,0.75); 

    \draw[line width=0.2mm] (1-0.18,0) -- (1,0.75); 
    \draw[line width=0.2mm] (1+0.18,0) -- (2,0.75); 
    \draw[-] (4-0.18,0) -- (3,0.75); 
    \draw[-] (4+0.18,0) -- (5,0.75); 

    \draw (0.2,0) node[left] {$g$} ;
    \draw (0.2,1) node[left] {$g+1$} ;
  \end{tikzpicture}
  \vspace{3mm}
\caption{A realized pedigree between generations $g$ and $g+1$ with $N=6$ individuals. The 0- and 1-parents of the individual $(i,g)$ are the individuals $P_{0,i,g}^{(N)}$ and $P_{1,i,g}^{(N)}$ in generation $g+1$.}
\label{fig1a}
\end{subfigure}
\hfill
\begin{subfigure}{0.4\textwidth}
 \begin{tikzpicture}[xscale=1.1, yscale=1.1] 
    \foreach \x in {1,2,3,4,5,6} {
        \filldraw[black] (\x -0.18,0) circle[radius=0.05]
                         (\x +0.18,0) circle[radius=0.05];
        \draw (\x,0) ellipse[x radius=0.35, y radius=0.25];
    }

    \foreach \x in {1,2,3,4,5,6} {
        \filldraw[black] (\x -0.18,1) circle[radius=0.05]
                         (\x +0.18,1) circle[radius=0.05];
        \draw (\x,1) ellipse[x radius=0.35, y radius=0.25];
    }

    \draw[-] (2-0.18,0) -- (4+0.18,1); 
    \draw[-] (2+0.18,0) -- (2+0.18,1); 
    \draw[-] (3-0.18,0) -- (4-0.18,1); 
    \draw[-] (3+0.18,0) -- (5-0.18,1); 
    \draw[-] (5-0.18,0) -- (4-0.18,1); 
    \draw[-] (5+0.18,0) -- (5+0.18,1); 
    \draw[-] (6-0.18,0) -- (5+0.18,1); 
    \draw[-] (6+0.18,0) -- (4+0.18,1); 

    \draw[line width=0.2mm] (1-0.18,0) -- (1-0.18,1); 
    \draw[line width=0.2mm] (1+0.18,0) -- (2+0.18,1); 
    \draw[-] (4-0.18,0) -- (3-0.18,1); 
    \draw[-] (4+0.18,0) -- (5+0.18,1); 
  \end{tikzpicture}
  \vspace{4mm}
\caption{The same pedigree together with realized Mendelian coin flips. We use $\big(M_{0,i,g},M_{1,i,g}\big)$ to record the ancestry of the 
$0$-gene and the $1$-gene of individual $(i,g)$.}
\label{fig1b}
\end{subfigure}    
\end{figure}

\FloatBarrier

In this work, the population pedigree will be given as the result of a stochastic process of bi-parental reproduction as described in Section~\ref{sect:setup}.  We will also use 
\begin{equation*}
\mathcal{G}^{(N)} \defeq \big({P_{0}^{(N)},P_{1}^{(N)}}\big) 
\end{equation*}
as a shorthand for the population pedigree.  The superscript $(N)$ anticipates taking limits $N\to\infty$ under both the quenched and annealed models.  

\subsection{Mendelian inheritance} 

Genetic lineages at an autosomal locus are transmitted through the pedigree according to Mendel's law of random segregation.  
For individual $i$ in generation $g$, we use 
\begin{equation*}
\big(M_{0,i,g},M_{1,i,g}\big) \in \big\{{\left(0,0\right),\left(0,1\right),\left(1,0\right),\left(1,1\right)}\big\}
\end{equation*}
to record the ancestry of the two gene copies, i.e.\ coming from $P_{0,i,g}^{(N)}$ and $P_{1,i,g}^{(N)}$, respectively. Thus, $M_{c,i,g}$ refers to random Mendelian segregation in the parent $P_{c,i,g}^{(N)}$ when they transmitted gene copy $c \in \{0,1\}$ to individual $i$.
In Figure \ref{fig1b} we have
$(M_{0,1,g},\,M_{1,1,g})=(0,1)$, $(M_{0,2,g},\,M_{1,2,g})=(1,1)$, $(M_{0,3,g},\,M_{1,3,g})=(0,0)$, etc.


\begin{defn}\label{Def:M}
Let $(M_{c,i,g})_{i \in [N], c \in \{0,1\}, g \in \N_0}$ be i.i.d.\ with
\begin{equation*}
\P \big ( M_{c,i,g} = 0 \big ) = \frac{1}{2} =
\P \big ( M_{c,i,g} = 1 \big ).
\end{equation*}
We call the random variables $M_{c,i,g}$ \emph{Mendelian coin flips} or simply \emph{Mendelian coins}.

\end{defn}

\subsection{Ancestral lines as coalescing random walks on a pedigree}
\label{subsect:coalRWviewpointprime}

We now combine the parental relationships among individuals given by the pedigree with the Mendelian coin flips to define ancestral relationships of gene copies.  In what follows, we use `ancestor' for ancestral gene copies and `parent' for ancestral diploid individuals. 

\begin{defn} \label{def:ancestor}
Let $(c,i,g) \in \{0,1\} \times [N] \times \N_0$ represent a gene copy at a locus.  We call 
\begin{equation*}
\big (P_{c,i,g}^{(N)} ,  M_{c,i,g} , g+1 \big )
\end{equation*}
the \emph{ancestor of $(c,i,g)$ in generation $g+1$}. More generally,  for $i,j \in [N]$, $c,d \in \{0,1\}$ and $h \geqslant  g+1$, we say that $(d,j,h)$ is 
\emph{the ancestor of $(c,i,g)$ in generation $h$} if there is a finite sequence
$(c,i) = (c_g^{}, i_g^{})$, $(c_{g+1}^{}, i_{g+1}^{})$, $\ldots$, 
$(c_h^{}, i_h^{}) = (d,j)$ 
such that for all 
$\ell \in  \{g,g+1,\ldots,h-1\}$, 
\begin{equation*}
(c_{\ell+1}^{}, i_{\ell+1}^{}) = \big ( M_{c_\ell^{},i_\ell^{},\ell}, 
P_{c_\ell^{},i_\ell^{},\ell}^{(N)} \big ).
\end{equation*} 
The associated sequence is called an \emph{ancestral line}.

\end{defn}

We now fix an arbitrary sample of $n$ gene copies $X_1 (0), \ldots, X_n(0) \in \{0,1\} \times [N]$ from the present generation $0$ and consider their ancestral lines; see Figure \ref{fig:ancestral} for an illustration. We can interpret them as coalescing random walks in a random environment given by the pedigree. 
For $j \in [n]$ and $g \in \N_0$, we write $X_j(g) = \big(M_j(g),I_j(g)\big)\in \{0,1\} \times [N]$ for the ancestor in generation $g$ in the past of the $j$-th sample $X_j(0)$, where $I_j(g) \in [N]$ is the index of the individual containing the ancestor of the sampled gene copy and $M_j(g) \in \{0,1\}$ specifies whether it is the $0$-gene or 
$1$-gene in that individual.  
We emphasize that the notation $M_j(g)$ pertains to the ancestral line of $X_j(0)$ whereas the notation $M_{c,i,g}$ pertains to the Mendelian coin flips that can be seen as the `driving noise' of these random walks; this is in anticipation of the proof of Lemma~\ref{L:ancestrallines} where we refer to $M_{M_j(g),I_j(g),g}$.

\begin{defn} \label{def:ancestrallines}
For each $j \in [n]$, the  $\{0,1\} \times [N]$-valued process $(X_j (g))_{g \in \N_0}$ is called the \emph{ancestral line of the $j$-th sample}.
\end{defn}

\begin{lemma}\label{L:ancestrallines}
The processes $(X_j (g))_{g \in \N_0}$ for $j\in [n]$ form a family of coalescing random walks on $\{0,1\} \times [N]$, whose transition probabilities, \emph{conditional on the pedigree} $\big({P_{0}^{(N)},P_{1}^{(N)}}\big)$, are given by
\begin{equation}\label{E:ancestrallines}
\begin{split}
&\P \Big ( X_j (g + 1) = \big (c, P_{M_j(g),I_j(g),g}^{(N)} \big ) \,\Big|\, \big({P_{0}^{(N)},P_{1}^{(N)}}\big) , 
\big ( X_k^{} (h)   \big )_{h \leqslant g, k \in [n]} \Big ) \\
& \quad =  
\P \Big ( X_j (g + 1) = \big (c, P_{M_j(g),I_j(g),g}^{(N)} \big ) \,\Big|\, \big({P_{0,\cdot,g}^{(N)},P_{1,\cdot,g}^{(N)}}\big) ,
X_j (g) \Big )
= \frac{1}{2}
\end{split}
\end{equation}    
for $c\in\{0,1\}$ and $g \in \N_0$. For any choice of indices
$i_1^{}, \ldots, i_r^{} \in [n]$ such that
$X_{i_1^{}}^{} (g),\ldots,X_{i_r^{}} (g)$ are pairwise distinct,
$X_{i_1^{}}^{} (g+1), \ldots, X_{i_{r}^{}}^{} (g+1)$ are conditionally independent given 
$\big({P_{0,\cdot,g}^{(N)},P_{1,\cdot,g}^{(N)}}\big)$ 
and
$\big ( X_k^{} (g)   \big )_{k \in [n]}$. Moreover, for all $i,j \in [n]$,
$X_i^{} (g+1) = X_j^{} (g+1)$ whenever $X_i(g) = X_j(g)$.
\end{lemma}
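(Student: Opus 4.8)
The plan is to unpack Definition~\ref{def:ancestor} into a one-step recursion and then carry out the measurability bookkeeping. By Definition~\ref{def:ancestor}, for every $j\in[n]$ and $g\in\N_0$,
\[
X_j(g+1) \;=\; \big(M_{M_j(g),I_j(g),g},\; P^{(N)}_{M_j(g),I_j(g),g}\big),
\]
so $X_j(g+1)$ is a \emph{deterministic} function of $X_j(g)$, of the generation-$g$ slice $\big(P^{(N)}_{0,\cdot,g},P^{(N)}_{1,\cdot,g}\big)$ of the pedigree, and of the single Mendelian coin $M_{M_j(g),I_j(g),g}$. The last assertion of the lemma is then immediate: if $X_i(g)=X_j(g)$, the recursion is applied to identical arguments, so $X_i(g+1)=X_j(g+1)$ (and, inductively, $X_i(h)=X_j(h)$ for all $h\geq g$).

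For the transition probabilities and the conditional independence I would introduce the filtration
\[
\cF_g \;\defeq\; \sigma\big(\cG^{(N)},\,(M_{c,i,h})_{c\in\{0,1\},\,i\in[N],\,h<g}\big).
\]
Iterating the recursion shows that $X_k(h)$ depends only on the pedigree and on Mendelian coins from generations strictly below $h$; hence $\big(X_k(h)\big)_{h\leq g,\,k\in[n]}$ is $\cF_g$-measurable, while the only randomness entering the step $g\to g+1$ is the family $(M_{c,i,g})_{c\in\{0,1\},i\in[N]}$ of generation-$g$ coins, which by Definition~\ref{Def:M} are i.i.d.\ $\mathrm{Unif}\{0,1\}$ and, being (by construction) independent of the pedigree, are independent of $\cF_g$. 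Now fix pairwise distinct $X_{i_1}(g),\dots,X_{i_r}(g)$ and decompose over the (at most countably many) events $\{X_{i_\ell}(g)=(m_\ell,k_\ell),\ \ell\in[r]\}$, where the pairs $(m_\ell,k_\ell)$ are pairwise distinct. On such an event $X_{i_\ell}(g+1)=\big(M_{m_\ell,k_\ell,g},P^{(N)}_{m_\ell,k_\ell,g}\big)$, and $M_{m_1,k_1,g},\dots,M_{m_r,k_r,g}$ are $r$ \emph{distinct} members of the i.i.d.\ family, hence mutually independent $\mathrm{Unif}\{0,1\}$ variables independent of $\cF_g$. Pulling the $\cF_g$-measurable indicator of the conditioning event out of the conditional expectation and summing over $(m_1,k_1),\dots,(m_r,k_r)$ yields at once $\P\big(X_j(g+1)=(c,P^{(N)}_{M_j(g),I_j(g),g})\mid\cF_g\big)=\tfrac12$ for $c\in\{0,1\}$, and the conditional independence of $X_{i_1}(g+1),\dots,X_{i_r}(g+1)$ given $\cF_g$.

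It remains to descend from the conditioning on $\cF_g$ to the reduced conditionings in \eqref{E:ancestrallines}, i.e.\ on $\sigma\big(\cG^{(N)},(X_k(h))_{h\leq g,k}\big)$, on $\sigma\big(P^{(N)}_{0,\cdot,g},P^{(N)}_{1,\cdot,g},X_j(g)\big)$, and on $\sigma\big(P^{(N)}_{0,\cdot,g},P^{(N)}_{1,\cdot,g},(X_k(g))_k\big)$, each of which is a sub-$\sigma$-algebra of $\cF_g$. For the transition probability this is automatic: by the tower property, a conditional probability a.s.\ equal to the constant $\tfrac12$ remains $\tfrac12$ under any smaller $\sigma$-algebra. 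For the conditional independence one must be slightly more careful, since conditional independence need not pass to sub-$\sigma$-algebras; but here the conditional law of $\big(X_{i_\ell}(g+1)\big)_{\ell\in[r]}$ given $\cF_g$ equals the product measure $\bigotimes_{\ell=1}^{r}\mathrm{Unif}\big\{(0,P^{(N)}_{m_\ell,k_\ell,g}),(1,P^{(N)}_{m_\ell,k_\ell,g})\big\}$ with $(m_\ell,k_\ell)=X_{i_\ell}(g)$, and this product measure is already measurable with respect to $\sigma\big(P^{(N)}_{0,\cdot,g},P^{(N)}_{1,\cdot,g},(X_k(g))_k\big)$; hence (tower property plus measurability) it is also a version of the conditional law given that smaller $\sigma$-algebra, and the conditional independence survives. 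I expect the measurability bookkeeping to be the only genuinely delicate point — in particular, justifying that the \emph{randomly indexed} coin $M_{M_j(g),I_j(g),g}$ behaves like a fresh fair coin given the past, and that distinct lineages use distinct such coins, even though the indices are functions of the past; the decomposition over the values of $(X_k(g))_k$ is exactly what resolves this.
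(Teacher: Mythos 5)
Your proposal is correct and follows essentially the same route as the paper's proof: both rest on the one-step recursion $X_j(g+1)=\big(M_{M_j(g),I_j(g),g},\,P^{(N)}_{M_j(g),I_j(g),g}\big)$ from Definition~\ref{def:ancestor}, with the transition probability $\tfrac12$ coming from the fairness of the Mendelian coin and the conditional independence from the independence of the coins indexed by distinct gene copies. The paper states this in two sentences and leaves the measurability bookkeeping (filtration, decomposition over the values of $(X_k(g))_k$, descent to sub-$\sigma$-algebras) implicit, whereas you spell it out; the content is the same.
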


\begin{proof}
By Definition~\ref{def:ancestor}, for $g \in \N_0$,
\begin{align}
  \label{eq:Xjt+1}
  X_j(g+1) = \big( M_j(g+1), I_j(g+1) \big) 
  = \Big (M_{M_j(g),I_j(g),g}, 
   P_{M_j(g),I_j(g),g}^{(N)}\Big ) .
\end{align}
The gene copy $(M_j(g), I_j(g))$ was inherited from
individual number $P_{M_j(g),I_j(g),g}^{(N)}$ of
generation $g+1$, and the Mendelian coin flip
$M_{M_j(g),I_j(g),g}$ decides which of the parent's two gene copies gets transmitted to its offspring $I_j(g)$.    
Hence the left-hand side of \eqref{E:ancestrallines} is $\P \big ( M_{M_j(g),I_j(g),g} = c \big )=1/2$ by Definition \ref{Def:M}. 
The statement about the conditional independence follows from the independence of the coin flips $M_{c,i,g}$.
\end{proof}

\begin{remark}
Keep in mind that at each point in time, there are technically always $n$ lineages $X_1, \ldots, X_n$, although some of them may coincide; more precisely, whenever $X_i(g) = X_j(g)$ for some $g \in \N_0$ and
$i,j \in [n]$, we have by Lemma~\ref{L:ancestrallines} 
and induction that $X_i^{} (h) = X_j^{} (h)$ for all 
$h \geqslant g$. We say that lineages $X_i^{}$ and $X_j^{}$ 
\emph{have coalesced at time $\min \{ g : X_i(g) = X_j (g) \}$}.
\hfill \mbox{$\Diamond$}
\end{remark}

The size of the state space of the family of coalescing random walks, namely  $(\{0,1\} \times [N])^n$, is of order $N^n$ 
which is huge when $N$ is large. It is customary to reduce the state space 
by ignoring the indices of the ancestral individuals and only keep track of the coalescence of sample genealogies as we go backward in time. This is done via partitions 
of $[n]$, or equivalently, equivalence relations on $[n]$.
For $1 \leqslant i, j \leqslant n$,
$g \in \N_0$, we write $i \sim_g j$ if and only if samples $i$ and
$j$ descend from the same chromosome $g$ generations ago, i.e.,
\begin{equation}
  \label{eq:xinNm.alt}
  i \sim_g j \quad \defiff \quad X_i(g) = X_j(g). 
\end{equation}

\subsection{The $n$-coalescent in the pedigree}
\label{subsec:ncoalescent}

As customary,
we summarise the ancestral relationships in the sample by means of a coalescent process $\Pi^{N,n} = \big (\Pi^{N,n}_g \big )_{g \in \N_0}$, which we call the $n$-coalescent (in the given pedigree). In view of \eqref{eq:xinNm.alt}, it would be natural for $\Pi^{N,n}$ to take values in the set
$\cE_n$ of partitions of $[n]$, where indices $i$ and $j$ belong to the same block of $\Pi^{N,n}_g$ if and only if the samples with these indices have a common ancestor $g$ generations in the past, i.e. if $i \sim_g j$; 
see~\cite{Berestycki2009}.

The reader should keep in mind that under the conditional law given the pedigree, $\Pi^{N,n}$ is in general \textit{not} a Markov process 
because its transitions depend on the actual positions $X_1 (g), \ldots, X_n (g)$ of the ancestral genes within the pedigree. However, when averaging over the pedigree w.r.t.\ the law about to be described in 
Sections~\ref{sect:setup} and~\ref{sec:pedigree}, 
$\Pi^{N,n}$ can be made Markovian by including additional information. More specifically, we need to account for which ancestral individuals contain two ancestral gene copies.

For this, we use notation from~\cite{MohleAndSagitov2003} and define $\Pi^{N,n}$ on the state space
\begin{equation*}
  \mathcal{S}_n= \big \{ \left\{\left\{C_1,C_2\right\},\ldots,\left\{C_{2x-1},C_{2x}\right\},C_{2x+1},\ldots,C_b\right\} :
    b \in [n], 1 \leqslant x \in \lfloor {b}/{2} \rfloor, \{C_1, \ldots, C_b\} \in \mathcal{E}_{n,b} 
\big \},
\end{equation*}
where $\lfloor x\rfloor$ is the largest integer less than or equal
to $x$ and $\cE_{n,b}$ is the set of all partitions of $\cE_n$ into exactly $b$ blocks. We equip both spaces $\mathcal{E}_n$ and
$\mathcal{S}_n$ with the discrete topology.  For later use we define
a map $\mathsf{cd} : \mathcal{S}_n\rightarrow\mathcal{E}_n$ such
that for
any
\begin{equation}
  \label{eq:xi.in.S_n}
  \xi=\big \{\left\{C_1,C_2\right\},\ldots,\left\{C_{2x-1},C_{2x}\right\},C_{2x+1},\ldots,C_b
  \big \}\in\mathcal{S}_n,
\end{equation}
$$
\mathsf{cd}(\xi):=\left\{C_1,C_2,\ldots,C_{2x-1},C_{2x},C_{2x+1},\ldots,C_b\right\}
\in \mathcal{E}_n.$$ Following \cite{BirknerEtAl2013c} and \cite{BirknerEtAl2018}, we call
$\mathsf{cd}(\xi)$ the {\it complete dispersion} of $\xi$ and we say that $\xi$ is \emph{completely dispersed} if $\mathsf{cd}(\xi) = \xi$.
\begin{remark} \label{rmk:makingPimarkovian}
The grouping of ancestral genes into diploid individuals is the least amount of additional information required to make $\Pi^{N,n}$ a Markov chain 
\emph{upon averaging over the pedigree}. Later (see Section~\ref{sec:paircouplings}), we will introduce another state space $\cH_n$ that allows for the efficient descriptions of \emph{two} conditionally independent coalescent processes, defined on the same pedigree.
\hfill \mbox{$\Diamond$}
\end{remark}

We can and will interpret $\cE_n$ as a subset of $\cS_n$. For $\xi \in \cS_n$ as in
\eqref{eq:xi.in.S_n},
$|\xi|\defeq b$ is the number of blocks in $\mathsf{cd}(\xi)$ and represents the number of distinct ancestral lines of the sample,
 $x$ is the number of individuals that contain two of them. Hence, there are $b-2x$ individuals that contain exactly one of the remaining ancestral lines.
In particular, $\xi \in \cE_n$ means $x=0$, and for such
$\xi$ we have $\xi = \mathsf{cd}(\xi)$. 

\begin{defn} \label{def:Pi^{N.n}}
For all $1 \leqslant n\leqslant N$, we let
$\Pi^{N,n} = \big (\Pi^{N,n}_g \big )_{g \in \N_0}$
be the process taking values in $\mathcal{S}_n$ such that 
$\mathsf{cd}(\Pi^{N,n}_g) = \{\text{equivalence classes under $\sim_g$}\}$ and that the classes of $\mathsf{cd}(\Pi^{N,n}_g)$ are (potentially) grouped
into pairs as in \eqref{eq:xi.in.S_n} to form an element
of $\cS_n$ by the following rule: for two distinct classes $[i]_g$ and $[i']_g$, the unordered pair $\{[i]_g,[i']_g\}$ is an element of $\Pi^{N,n}_g$
if and only if 
\begin{equation}
  \label{eq:xinNm.alt2}
  I_i (g) = I_{i'}(g).
\end{equation}
For definiteness, given
\begin{equation*}
\Pi_0^{N,n} 
=
\big \{
\{C_1,C_2\},\ldots,\{C_{2x-1}, C_{2x}  \},C_{2x+1},\ldots,C_b
\big \},
\end{equation*}
we make the following choice for the initial position of the ancestral lines. For $1 \leqslant i \leqslant x$, we let 
$X_{2i - 1} \defeq (0,i)$ and 
$X_{2i} \defeq (1,i)$, while for
$x+1 \leqslant i \leqslant b$, we let
$X_i \defeq (0,i)$.
\end{defn}

See Figure \ref{fig:ancestral} for an illustration of $\Pi^{6,3}$.
Note that \eqref{eq:xinNm.alt2} implies $M_i (g) = 1 - M_{i'} (g)$ for $i \not\sim_g i'$ satisfying \eqref{eq:xinNm.alt2}.
\FloatBarrier

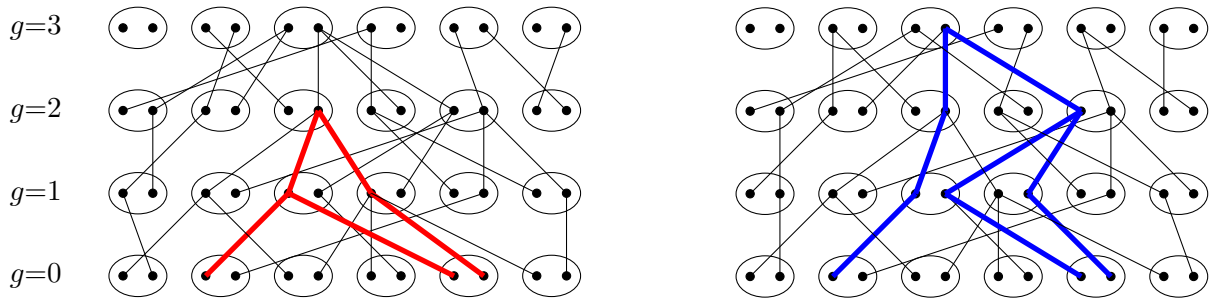
\begin{figure}
\centering
\begin{subfigure}{0.4\textwidth}
  \begin{tikzpicture}[xscale=1.1, yscale=1.1] 
            \foreach \y in {0,1,2,3}
            \foreach \x in {1,2,3,4,5,6} {
              \filldraw[black] (\x -0.18,\y) circle[radius=0.05]
                               (\x +0.18,\y) circle[radius=0.05];
              \draw (\x,\y) ellipse[x radius=0.35, y radius=0.25];
            }
            \draw (1-0.18,0) -- (2-0.18,1);
            \draw (1+0.18,0) -- (1-0.18,1);
            \draw [line width=0.7mm, red] (2-0.18,0) -- (3-0.18,1);
            \draw (2+0.18,0) -- (5+0.18,1);
            \draw (3-0.18,0) -- (2-0.18,1);
            \draw (3+0.18,0) -- (4-0.18,1);
            \draw (4-0.18,0) -- (4-0.18,1);
            \draw (4+0.18,0) -- (3+0.18,1);
            \draw[line width=0.7mm, red] (5-0.18,0) -- (3-0.18,1);
            \draw[line width=0.7mm, red] (5+0.18,0) -- (4-0.18,1);
            \draw (6-0.18,0) -- (4-0.18,1);
            \draw (6+0.18,0) -- (6+0.18,1);

            \draw (1-0.18,1) -- (2-0.18,2);
            \draw (1+0.18,1) -- (1+0.18,2);
            \draw (2-0.18,1) -- (3+0.18,2);
            \draw (2+0.18,1) -- (5+0.18,2);
            \draw [line width=0.7mm, red] (3-0.18,1) -- (3+0.18,2);
            \draw (3+0.18,1) -- (5-0.18,2);
            \draw  [line width=0.7mm, red] (4-0.18,1) -- (3+0.18,2);
            \draw  (4+0.18,1) -- (5-0.18,2);
            \draw (5-0.18,1) -- (4-0.18,2);
            \draw (5+0.18,1) -- (5+0.18,2);
            \draw (6-0.18,1) -- (4-0.18,2);
            \draw (6+0.18,1) -- (5+0.18,2);

            \draw (1-0.18,2) -- (4-0.18,3);
            \draw (1+0.18,2) -- (3-0.18,3);
            \draw (2-0.18,2) -- (2+0.18,3);
            \draw (2+0.18,2) -- (3-0.18,3);
            \draw (3-0.18,2) -- (2-0.18,3);
            \draw (3+0.18,2) -- (3+0.18,3);
            \draw (4-0.18,2) -- (4-0.18,3);
            \draw (4+0.18,2) -- (3+0.18,3);
            \draw (5-0.18,2) -- (3+0.18,3);
            \draw (5+0.18,2) -- (5-0.18,3);
            \draw (6-0.18,2) -- (6+0.18,3);
            \draw (6+0.18,2) -- (5+0.18,3);
    \draw (0.2,0) node[left] {$g$$=$$0$} ;
    \draw (0.2,1) node[left] {$g$$=$$1$} ;
    \draw (0.2,2) node[left] {$g$$=$$2$} ;
    \draw (0.2,3) node[left] {$g$$=$$3$} ;
  \end{tikzpicture}
\end{subfigure}
\hfill
\begin{subfigure}{0.4\textwidth}
 \begin{tikzpicture}[xscale=1.1, yscale=1.1] 
            \foreach \y in {0,1,2,3}
            \foreach \x in {1,2,3,4,5,6} {
              \filldraw[black] (\x -0.18,\y) circle[radius=0.05]
                               (\x +0.18,\y) circle[radius=0.05];
              \draw (\x,\y) ellipse[x radius=0.35, y radius=0.25];
            }
            \draw (1-0.18,0) -- (2-0.18,1);
            \draw (1+0.18,0) -- (1+0.18,1);
            \draw [line width=0.7mm, blue] (2-0.18,0) -- (3-0.18,1);
            \draw (2+0.18,0) -- (5+0.18,1);
            \draw (3-0.18,0) -- (2-0.18,1);
            \draw (3+0.18,0) -- (4-0.18,1);
            \draw (4-0.18,0) -- (4-0.18,1);
            \draw (4+0.18,0) -- (3+0.18,1);
            \draw[line width=0.7mm, blue] (5-0.18,0) -- (3+0.18,1);
            \draw[line width=0.7mm, blue] (5+0.18,0) -- (4+0.18,1);
            \draw (6-0.18,0) -- (4-0.18,1);
            \draw (6+0.18,0) -- (6-0.18,1);

            \draw (1-0.18,1) -- (2-0.18,2);
            \draw (1+0.18,1) -- (1+0.18,2);
            \draw (2-0.18,1) -- (3+0.18,2);
            \draw (2+0.18,1) -- (5+0.18,2);
            \draw [line width=0.7mm, blue] (3-0.18,1) -- (3+0.18,2);
            \draw [line width=0.7mm, blue] (3+0.18,1) -- (5-0.18,2);
            \draw (4-0.18,1) -- (3+0.18,2);
            \draw [line width=0.7mm, blue] (4+0.18,1) -- (5-0.18,2);
            \draw (5-0.18,1) -- (4-0.18,2);
            \draw (5+0.18,1) -- (5+0.18,2);
            \draw (6-0.18,1) -- (4-0.18,2);
            \draw (6+0.18,1) -- (5+0.18,2);

            \draw (1-0.18,2) -- (4-0.18,3);
            \draw (1+0.18,2) -- (3-0.18,3);
            \draw (2-0.18,2) -- (2-0.18,3);
            \draw (2+0.18,2) -- (3+0.18,3);
            \draw (3-0.18,2) -- (2-0.18,3);
            \draw [line width=0.7mm, blue] (3+0.18,2) -- (3+0.18,3);
            \draw (4-0.18,2) -- (4+0.18,3);
            \draw (4+0.18,2) -- (3-0.18,3);
            \draw [line width=0.7mm, blue]  (5-0.18,2) -- (3+0.18,3);
            \draw (5+0.18,2) -- (5-0.18,3);
            \draw (6-0.18,2) -- (6-0.18,3);
            \draw (6+0.18,2) -- (5-0.18,3);
  \end{tikzpicture}
\end{subfigure} 
\vspace{3mm}
\caption{\textbf{Two different genealogies on the same pedigree.} The two figures above are two realizations of the Mendelian coin flips in the population  of $N=6$ individuals with the same pedigree. Consider a sample of  $n=3$ genes at generation $g=0$. The sample 
$\big(X_i(0)
\big)_{1\leqslant i \leqslant3}$
is described as follows: $X_1(0)=(0,2)$ is the 0-gene of the second individual from the left, and $X_2(0)=(0,5)$ and $X_3(0)=(1,5)$ both belong to the fifth individual.  The ancestral process $\Pi^{6,3}$ in Definition~\ref{def:Pi^{N.n}} is shown by the thick edges, with $\Pi^{6,3}_0=\big\{ \{1\},\{\{2\}, \{3\}\}\big\}\in \mathcal{S}_3$. Our main question concerns the conditional distribution of the ancestral process for a sample given the pedigree.
}
\label{fig:ancestral}
\end{figure}

So far, we have constructed a coalescent process which describes the genealogy of a sample embedded within an arbitrary (but fixed) population pedigree. In the next section and what follows, we will consider pedigrees which are randomly generated by a general model of exchangeable bi-parental reproduction. 

Two kinds of scaling limits are of interest. First, when averaging over the pedigree, 
$\Pi^{N,n}$ becomes a homogeneous Markov chain on 
$\cS_n$. In the absence of self-fertilisation (hereafter just `selfing'), genes are most of the time dispersed across distinct individuals, whence 
$\Pi^{N,n}$ will (asymptotically, as $N \to \infty$) take values mostly in $\cE_n^{} \subset \cS_n^{}$. After rescaling time appropriately, it converges in the sense of 
finite-dimensional distributions to a traditional $\Xi$-coalescent, see~\cite{Schweinsberg2000} for a general reference on $\Xi$-coalescents. This `annealed' limit theorem was proved in~\cite{BirknerEtAl2018}.

The second scaling limit is for the corresponding `quenched' coalescent process conditional on the pedigree.  In this case we study the ancestral process given a `typical' realisation of the pedigree, which we assume to be randomly generated according to a diploid Cannings model.  We show that 
$\Pi^{N,n}$ converges for $N \to \infty$ to an inhomogeneous coalescent process on $\cE_n$. We will introduce this process in a special case in Subsection~\ref{subsec:limitingcoalescent}. The general definition will be given in Section~\ref{sec:inhomcoal}.

\section{Diploid Cannings model and statement of main results}\label{sect:setup}

We recall the general diploid 
(two-parent) exchangeable population model
with discrete generations from \cite{BirknerEtAl2018} which we will use as a model for a random pedigree. We also introduce the fundamental convergence assumption on the law of individual offspring numbers and see how it implies the proper time scale. We will also explore in a somewhat heuristic fashion how it suggests a putative scaling limit.

\subsection{Family sizes and offspring numbers}
\label{subsec:offspringnumbers}

The \emph{general diploid Cannings model} for the pedigree is characterised by a sequence 
$\big (V^{(g)} \big )_{g \in \N}$ of
\emph{offspring matrices}. They are
i.i.d.\  random $N \times N$-matrices and prescribe the family sizes of each pair of individuals.
For $g \in \N$ and individuals $i$ and $j$ in the $g$-th generation ($i < j$), individuals $i$ and $j$ have $V^{(g)}_{i,j}$ 
children. 
More precisely, a child of individuals $i$ and $j$ will have 
$i$ as their $0$-parent and $j$ as their $1$-parent, or vice versa, with equal probability $1/2$.

Implicitly, $V^{(g)}_{j,i} := V^{(g)}_{i,j}$ for $j>i$ when notationally necessary. We will assume throughout that there is no selfing, that is, each individual has precisely two distinct parents. Formally, this means that $V^{(g)}_{i,i} = 0$ for all
$i \in [N]$.

The reproduction law is independent and identically distributed from
generation to generation, i.e., the matrices
$\big(V^{(g)}_{i,j}\big)_{1 \leqslant i,j \leqslant N}$, $g \in \N$ are
i.i.d.\ and we write $V_{i,j}=V_{i,j}^{(1)}$ for simplicity.
Conditionally on $(V^{(g)})_{g \in \N}$, children are assigned uniformly to their parents in a way that is compatible with the given offspring numbers. In particular, the parental roles are assigned by independent tosses of a fair coin. See Section~\ref{sec:pedigree} for details, in particular 
\eqref{eq:ballsinboxes}, \eqref{pedigree.law} and the surrounding discussion.

Our fundamental assumptions are
\begin{itemize}
\item (fixed population size) 
    \begin{equation}
  \label{eq:fixedN}
  \sum_{1 \leqslant i < j \leqslant N} V_{i,j} = N \qquad \text{and}
\end{equation}
\item (no population structure)
\begin{equation}
  \label{eq:exchangeable}
  \left(V_{i,j}\right)_{1 \leqslant i,j \leqslant N} \mathop{=}^d \left(V_{\sigma(i),\sigma(j)}\right)_{1 \leqslant i,j \leqslant N}
  \quad \text{for any permutation $\sigma$ of $[N]$}.
\end{equation}
\end{itemize}
Mathematically,
$\left(V_{i,j}\right)_{1 \leqslant i,j \leqslant N}$ is a symmetric, finite, jointly
exchangeable array with vanishing diagonal. Its law depends on $N$ but we will not
make this explicit in our notation.

As it stands, the model does not contain different sexes (see however Remark~\ref{rmk:2sexmodel} and Section~\ref{Ex:two-sex}) and 
excludes the possibility of selfing (but see Section~\ref{sect:extensions}). Still, this class is
fairly broad, see 
\cite[Section~2]{BirknerEtAl2018} and 
Section~\ref{sec:examples} below for concrete examples.

The {\em total offspring numbers}
\begin{align}
  \label{def:Vi}
  V_i := \sum_{1 \leqslant j \leqslant N} V_{i,j}, \quad i \in [N],
\end{align}
give the total number of offspring of each individual $i$ for $1\leqslant i \leqslant N$. They play a crucial role
in the description of the limit. These $V_i$ children may be full or half siblings.  The vector $(V_i)_{1 \leqslant i \leqslant N}$ inherits exchangeability
from the array $(V_{i,j})$, i.e.
\begin{equation} \label{eq:V_iexchangeable}
(V_i)_{1 \leqslant i \leqslant N} 
\stackrel{d}{=}
(V_{\sigma(i)})_{1 \leqslant i \leqslant N} 
\end{equation}
for any permutation of $\sigma$ of [N],
see~\cite[Eq.~(1.3)]{BirknerEtAl2018}.
Observe that we always have 
$\sum_{i=1}^N V_i = 2N-\sum_{i=1}^NV_{ii} =2N$ because there is no selfing.

Moreover, we will write $ \widehat V_i$ for the number of children who have individual $i$ as their $0$-parent. Since parental roles are assigned uniformly, conditionally on $V_i$, $ \widehat V_i$ is $\textnormal{Bernoulli} (V_i, 1/2)$-distributed.

\begin{remark}[Two-sex model]\label{rmk:2sexmodel} 
\citet[Remark~2.3.3]{BirknerEtAl2018} give a brief sketch of how one can embed a
two-sex model into the set-up from Section~\ref{subsec:offspringnumbers}, i.e., define a matrix $(V_{i,j})$ satisfying \eqref{eq:fixedN} and \eqref{eq:exchangeable}.  Here is a bit more detail: Let $0<r<1$
and assume that there are two sexes, called $1$ and $2$. Any given
generation consists of $\lfloor r N \rfloor$ individuals of sex\ $1$
and $N - \lfloor r N \rfloor$ individuals of sex\ $2$. With a small
abuse of notation, we will write $rN = \lfloor r N \rfloor$ and
$(1-r)N = N - \lfloor r N \rfloor$ in the following.  Sexes will be
assigned randomly to the $N$ children forming the offspring generation
(by drawing uniformly a subset of size $rN$ from $[N]$ and declaring that
individual $i$ has sex~$1$ if and only if $i$ belongs to that subset
and sex~$2$ otherwise).

Let $(O_{k,\ell}) = (O_{k,\ell})_{k=1,\dots,rN; \ell=1,\dots,(1-r)N}$ be a separately exchangeable
$\N_0$-valued array, i.e.\
\[
  (O_{k,\ell})_{k=1,\dots,rN; \ell=1,\dots,(1-r)N} \mathop{=}^d (O_{\pi(k),\pi'(\ell)})_{k=1,\dots,rN; \ell=1,\dots,(1-r)N}
\]
for any permutation $\pi$ of $[rN]$ and $\pi'$ of $[(1-r)N]$. 
We
interpret $O_{k,\ell}$ as the number of joint offspring of the $k$-th
individual of sex~$1$ and the $\ell$-th individual of sex~$2$. By
assigning the sexes randomly, we can 
read $(O_{k,\ell})$ as a
specific choice for the law of $(V_{ij})$ in \eqref{eq:exchangeable} as follows.

Let $R=(R_1,\dots,R_N)$ be uniformly distributed on
$\{ (a_1,\dots,a_N) \in \{1,2\}^N : \sum_{i=1}^N \1(a_i=1) = rN \}$ and independent of 
$(O_{k,\ell})$. Put
\[
  F_a(i) \defeq \sum_{m=1}^i \1(R_m=a), \quad a \in \{1,2\}, \, 1 \leqslant i \leqslant N
\]
(with sex assignment encoded by $R$, $F_{R_i^{}}(i)$ is the index of
individual $i \in [N]$ within its sex group).
Then it only remains to define for $1\leqslant i \leqslant j \leqslant N$
\begin{equation}
  V_{i,j} \defeq
  \begin{cases}
    0, & \text{if } R_i = R_j,\\
    O_{F_1(i),F_2(j)}, & \text{if } R_i =1, R_j=2, \\
    O_{F_1(j),F_2(i)}, & \text{if } R_i =2, R_j=1.
  \end{cases}
\end{equation}
This way of encoding sexes of individuals works well for the description of the transmission of autosomal genetic information. However, if one wishes to explicitly include sex-linked genetic information, one would have to enhance the notation and set-up accordingly.
\hfill \mbox{$\Diamond$}
\end{remark}

\begin{remark}[Diploid Moran model]
The haploid Cannings model covers the (haploid) Moran model mathematically, provided that one suitably re-interprets $g$ as  a time-step of the Markov chain so that one generation corresponds to $N$ time-steps, and some  individuals at different time-steps are considered to be the same. 
We expect something analogous to hold for our diploid model.

Consider the diploid Moran model of \cite{coron2022pedigree} and \cite{linder2009} without selfing.
At each time-step $g$, a randomly chosen individual dies and is replaced by a single offspring produced by two distinct parents in the previous time-step $g+1$, and all the other individuals are considered to be the same as those in the previous time-step (hence generations overlap). In each time step, \cite{coron2022pedigree} choose a uniformly distributed triple $(\pi_1^{},\pi_2^{},\kappa) \in [N]^3$, containing the two parents 
$\pi_1^{} \neq \pi_2^{}$ and the offspring $\kappa$. In this case, 
\begin{equation*}
\begin{cases}
V_{\pi_1,\pi_2}=1 & \text{ (the single offspring) }\\
V_{i,i}=1 &\text{ for all } i\in[N] \setminus \{\kappa\} \text{ (the same individuals in consecutive time-steps)},\\
V_{i,j}=0 & \text{ otherwise}.
    \end{cases}
\end{equation*}

In contrast, our diploid Cannings model assumes that $V_{i,i}=0$ and that Mendel’s law applies to all individuals, not only to the offspring $\kappa$.  Nonetheless, we conjecture that Theorem \ref{thm:fddconvergence} may still hold for a suitably adjusted version of our model, with the quenched limit being the Kingman coalescent when time is measured proportional to $N^2$ time-steps.\hfill \mbox{$\Diamond$}
\end{remark}

In  what follows, we use $c_N^{}$ to denote the 
\emph{annealed pair coalescence probability}, which is the mean probability (averaged over all realisations of the pedigree) that a given pair of genes, picked uniformly without replacement from the same generation 
\emph{and from distinct individuals},
has a common ancestor (gene) in the previous generation, see \cite[Eq.~(1.4) and Lemma~3.1; also the discussion there in Section~1.1, item~3]{BirknerEtAl2018}.
By symmetry, we may assume that both are $0$-genes, whence they will come from the same parent if and only if the corresponding individuals have the same $0$-parent. 
For any $j \in [N]$, the probability that both of the sampled individuals pick $j$ as their $0$-parent is given by
\begin{equation*}
\frac{1}{N(N-1)}  \widehat V_j ( \widehat V_j - 1).
\end{equation*}
The conditional probability of picking the same ancestral gene, given that both of the sampled individuals pick the same $0$-parent, is $1/2$.
Thus, by exchangeability and the fact that
$ \widehat V_1^{}$ is $\textnormal{Binomial}(V_1,1/2)$-distributed given $V_1$, 
\begin{equation}\label{eq:cN1}
c_N^{} = 
\sum_{j=1}^N
\frac{1}{2 N (N - 1)} \E[ ( \widehat V_j)_{2 \downarrow}]
=
\frac{1}{2 (N - 1)} \E[ ( \widehat V_1)_{2 \downarrow}]
=
\frac{1}{8 (N-1)} \E[ (V_1)_{2 \downarrow}]
\end{equation}
where $(v)_{k \downarrow}:=v(v-1)\cdots(v-k+1)$ denotes the $k$-th falling factorial.

\begin{remark} \label{rmk:relevantoffspring} 
The $ \widehat V_i$ play the same role as in~\cite{BirknerEtAl2018}, where they were called `relevant offspring numbers'. They will play a key role in
Section~\ref{sec:pedigree} when we discuss partial averages of general transition probabilities (rather than just pair coalescences), conditional on the total offspring numbers.
\hfill \mbox{$\Diamond$}
\end{remark}

We assume that
\begin{equation}\label{eq:limcN=0}
  \lim_{N\to\infty}c_N^{} = 0,
\end{equation}
which we note is equivalent to $\lim_{N\to\infty}\E[V_1^2]/N = 0$  since $\E[V_1]=2$ for all $N\in\mathbb{N}$ due to exchangeability and the fact that $\sum_{i=1}^N V_i = 2N$.
Later, we will rescale time so that one unit of (continuous) time corresponds to  $\lfloor c_N^{-1} \rfloor$ generations.
This means that we will choose our time scale so that the mean (rescaled) time to the most recent common ancestor for a sample of size two is normalised to $1$.

Denote by  $V_{(1)} \geqslant V_{(2)} \geqslant \cdots \geqslant V_{(N)}$
the ranked version of the total offspring numbers $(V_1,\dots,V_N)$ and by
\begin{equation*}
  \Phi_N := \mathscr{L}
  \Big( {\textstyle \frac{V_{(1)}}{2N}, \frac{V_{(2)}}{2N}, \dots,
    \frac{V_{(N)}}{2N}, 0, 0, \dots} \Big)
\end{equation*}
the law of the ranked (total) offspring \emph{frequencies}, viewed
as a probability measure on the infinite-dimensional simplex
$\Delta:=\left\{(x_1,x_2,\ldots): x_1\geqslant x_2 \geqslant \cdots \geqslant 0, \,
  \sum_{i=1}^{\infty}x_i\leqslant 1\right\}$.  For
\mbox{$x=(x_1,x_2,\ldots)\in\Delta$},
we write $|x|_1:=\sum_{i=1}^{\infty}x_i$ for its $\ell^1$-norm,
$\|x\|^2 \defeq \langle x,x \rangle:=\sum_{i=1}^{\infty}x_i^2$, and put
\mbox{$\mathbf{0} := (0,0,\dots) \in \Delta$}. Naturally, we equip $\Delta$ with the
topology induced by the product topology on $[0,1]^\infty$, metrized e.g.\ via
$d_\Delta(x,y) = \sum_{i=1}^\infty 2^{-i} |x_i-y_i|$.


The fundamental convergence assumption is that
\begin{align}
  \label{eq:PhiNconv}
  \frac1{2 c_N} \Phi_N(\dd x) \mathop{\longrightarrow}_{N\to\infty}
  \frac{1}{\langle x,x \rangle} 
  \Xi'(\dd x)
  \quad \text{vaguely on}\; \Delta \setminus \{\mathbf{0}\},
\end{align}
where $\Xi'$ is a sub-probability measure on $\Delta \setminus \{ \mathbf{0} \}$ described in the next section; see \citet[Eq.~(16)]{BirknerEtAl2018}
 and Appendix~\ref{app:jumphold} for alternate formulations.

\subsection{The limiting coalescent}
\label{subsec:limitingcoalescent}

The convergence in \eqref{eq:PhiNconv} can be interpreted as follows. Let $x \in \Delta \setminus \{ \mathbf{0} \}$.  After rescaling time by $c_N^{-1}$ (that is, letting one unit of continuous time correspond to $\lfloor c_N^{-1} \rfloor$ many generations), in the limit $N \to \infty$ we encounter at rate $2 \langle x, x \rangle^{-1} \Xi'(\dd x)$ a \emph{generation with large individual progeny} (GLIP)
in the sense that there are individuals whose relative offspring numbers comprise non-vanishing fractions $x_1^{} \geqslant x_2^{} \geqslant \ldots \geqslant 0$
of the population. More specifically, we will speak of an
$\varepsilon$-GLIP if $\| x \| \geqslant \varepsilon$.

We then expect (see  Subsection~\ref{subsec:largeoffspring} for a more detailed discussion) each gene in the offspring generation to choose independently either gene of parent $i$ as its ancestor with probabilities $x_i^{} / 2$ respectively. Letting 
\begin{equation} \label{eq:varphidef}
\varphi : \Delta \to \Delta, \quad
(x_1^{},x_2^{},\ldots ) \mapsto 
(x_1^{} / 2, x_1^{} / 2, x_2^{} / 2, x_2^{} / 2, \ldots )
\end{equation}
and $y \defeq \varphi(x)$,
we expect that encountering a GLIP (say, at rescaled time $t$) should trigger a transition of the limiting coalescent $\Pi^n$ 
according to the following random experiment.
\begin{itemize}
\item 
Divide the unit interval $I = [0,1)$ into subintervals (or buckets) as 
$I = B_1 \dot \cup B_2 \dot \cup \ldots \dot \cup J$ with
\begin{equation*}
B_k \defeq [y_0+y_1 + \ldots + y_{k-1}, \,y_0+y_1 + \ldots + y_{k} )
\end{equation*}
for $k \in\mathbb{N}$ and $y_0:=0$,  and with
\begin{equation*}
J \defeq [|y|_1,\,1).
\end{equation*}
\item 
Associate with each block $A$ of $\Pi^n_{t-}$  an independent uniform random variable $U_A$ on $[0,1)$.
\item 
Merge all blocks that are contained within the same subinterval $B_k$ for some $k \in \N$, that is, set
\begin{equation*}
\Pi^n_t \defeq
\bigg (
\bigcup_{k \in \N} \Big \{
\bigcup_{\substack{ A \in \Pi^n_{t-}  \\ U_A \in B_k}  } A
\Big \}
\cup \bigcup_{ \substack{
A' \in \Pi_{t-}^n \\
U_{A'} \in J
} }\big \{ A'  \big \}
\bigg ) \setminus \{ \varnothing \}.
\end{equation*}

\end{itemize}
For short, we say that $\Pi^n$ performs at time $t$ a \emph{merger in the paintbox $y$} or, even more concisely, a \emph{$y$-merger}. 
More formally, if $\Pi^n$ performs a $y$-merger at time $t$ and 
if $\Pi^n_{t-} = \xi \in\mathcal{E}_n$ with $|\xi| = b$, then $\Pi^n_t = \eta$ with probability
\begin{align}
  \label{eq:paintboxtransitionprime}
  p(y; \xi, \eta) \defeq \sum_{\ell=0}^s
  \sum_{\substack{i_1,i_2,\dots,i_{r+\ell} \in \N \\ \text{pairw.\ diff.}}}
  \binom{s}{\ell} y_{i_1}^{k_1} y_{i_2}^{k_2} \cdots y_{i_r}^{k_r} y_{i_{r+1}} y_{i_{r+2}} \cdots y_{i_{r+\ell}}
  \Big( 1 - |y|_1 \Big)^{s-\ell}.
\end{align}
Here, we assume that
$|\eta| = r+s$ 
and $\eta$ arises from $\xi$ by merging $r$ groups of classes of sizes 
$k_1^{}, k_2^{}, \ldots k_r^{} \geqslant 2$, with $s = b - \sum_{\ell = 1}^r k_\ell^{}$ classes not participating in any merger.
In \eqref{eq:paintboxtransitionprime}, $i_1^{},\ldots,i_r^{}$
are the indices of the buckets (among $B_1, B_2,\ldots$)
containing 
$k_1^{},\ldots,k_r^{}$
blocks of $\xi$, and 
$i_{r+1}^{}, \ldots, i_{r+\ell}^{}$
are those that contain only a single block each. 
The outer sum over $\ell$ comes from the fact that a block does not participate in a merger if either it ends up alone in one of the buckets $B_1, B_2, \ldots$ or it ends up (perhaps together with others) in the bucket $J$;
the last factor
$\big ( 1- |y|_1^{} \big )^{s - \ell}$ corresponds to those blocks that land in $J$.
See
\cite[Theorem~2]{Schweinsberg2000} for details and 
also~\cite{Aldous1985,Kingman1978,Pitman1995} for general background on the construction of random partitions.

In particular, taking $r=1$ and $b=k_1=2$, the right hand side of \eqref{eq:paintboxtransitionprime} becomes
$\langle y, y \rangle = \langle x, x \rangle / 2$ which is the probability to observe a coalescence of any given pair of blocks upon encountering a $y$-merger with $y = \varphi(x)$; the same parent is picked with probability $\langle x, x \rangle$ and then, the same ancestral gene is picked with probability $1/4 + 1/4 = 1/2$.

To understand the effect of small families, we reconsider our convergence assumption \eqref{eq:PhiNconv}. For any fixed $\varepsilon > 0$ and any 
$A \subseteq \Delta \setminus B_\varepsilon ( \mathbf{0} )$
with 
$\Xi' (\partial A) = 0$, 
vague convergence implies that
\begin{equation} \label{eq:PhiNconvprime}
\frac{1}{ c_N^{}} \int_A \frac{1}{2}  \langle x, x \rangle \Phi_N^{} (\dd x) \mathop{\longrightarrow}_{N \to \infty} \Xi'(A).
\end{equation} 
In the prelimit, $\Phi_N (\dd x)$ is the probability of observing a GLIP, leading to an `approximate' $\varphi(x)$-merger 
(see Lemma~\ref{lem:largerep} below for a precise statement) with pair-coalescence probability $\approx \langle \varphi(x), \varphi (x) \rangle = \frac{1}{2} \langle x,x \rangle$. Recalling that $c_N^{}$ is the average pair coalescence probability per generation, \eqref{eq:PhiNconvprime} implies that  whenever two genes find a common ancestor, 
$\Xi'(A)$ is approximately the conditional probability that this coalescence was due to a $\varphi(x)$-merger for some $x \in A$ or, in other words, that the ranked total offspring frequencies in the generation where the most recent common ancestor of the sampled pair lived were given by $x \in A$.

\begin{remark}[An alternative to \eqref{eq:PhiNconv}]\label{Rk:Xi'vsXi}
Since the paintboxes governing the evolution of the coalescent limit are given by the images of the ordered offspring frequencies under $\varphi$, the reader might  wonder why the convergence in \eqref{eq:PhiNconv} was not phrased in terms of a convergence of the image measures 
$\Phi_N^{} \circ \varphi^{-1}$. Indeed, one easily checks that \eqref{eq:PhiNconv} is equivalent to 
\begin{align}\label{eq:PhiNconv2}
  \frac{1}{c_N} \Phi_N \circ \varphi^{-1} (\dd x) \mathop{\longrightarrow}_{N\to\infty}
  \frac{1}{\langle x,x \rangle} \Xi(\dd x)
  \quad \text{vaguely on}\; \Delta \setminus \{\mathbf{0}\},
\end{align}
with 
\begin{equation} \label{eq:Xi}
\Xi \defeq \Xi' \circ \varphi^{-1}.
\end{equation}
Intuitively, this means that for all 
$x \in \Delta \setminus \{\mathbf{0}\}$
and after speeding up time by a factor of
$c_N^{-1}$,
$x$-mergers are observed roughly at rate $\frac{1}{\langle x,x \rangle} \Xi ( \dd x )$ in the limit $N \to \infty$.
In other words, the occurrence of paintbox mergers is governed by a Poisson point process $\Psi$
on $[0,\infty) \times ( \Delta \setminus \{  \mathbf{0}  \}      )$
with intensity $\dd t \, \frac{1}{\langle x,x \rangle} \Xi (\dd x)$.
For any
$(t,x) \in \Psi$,
$\Pi^n$ performs an $x$-merger at time $t$.\hfill \mbox{$\Diamond$}
\end{remark}

The convergence \eqref{eq:PhiNconvprime} implies in particular that 
$\Xi'(\Delta \setminus \{ \mathbf{0} \} )$ will be the limiting (as $N\to\infty$) conditional probability that the coalescence of any given pair of lineages is caused by a GLIP. Because pair mergers can also take place in the absence of highly prolific individuals (think, for instance, of the Wright-Fisher model where the number of offspring of each individual is asymptotically of order $1$), it is clear that $\Xi'$ will in general be a strict sub-probability measure on $\Delta \setminus \{ \mathbf{0} \}$. Due to our time scaling, we expect to see any given pair merge at total rate $1$. By mixing properties of the ancestral lines in the pedigree, in turn we expect to see independent, `Kingman-type' pairwise coalescences that are not associated with any large paintbox-merging event to occur at rate
\begin{equation} \label{eq:cpair}
c_{\textnormal{pair}}^{} \defeq 1 - \Xi' \big ( \Delta \setminus \{ \mathbf{0} \}  \big )
\end{equation}
for each pair of lineages.
Since this behaviour is due to the accumulation of mass around $0$ in \eqref{eq:PhiNconv}, we define $\Xi'(\{\mathbf{0}\}) := 1 - \Xi'(\Delta \setminus \{\mathbf{0}\})$ so that $\Xi'$ becomes a probability measure on $\Delta$. 
This corresponds to the decomposition 
$\Xi = \Xi_0 + a \delta_{\bf 0}^{}$ 
in~\cite[Theorem 2]{Schweinsberg2000} with a measure $\Xi_0$ on 
$\Delta \setminus \{ \mathbf{0} \}$. See also
\cite[Eqs.~(1.5), (1.6) and Appendix~A]{BirknerEtAl2018}.

\begin{defn} \label{def:paircoalescence}
For any $\eta,\xi\in \cE_n$, we say that
 $\eta$ arises from $\xi$ via a pair-coalescence and write 
$\xi \vdash \eta$ if there are $A, B \in \xi$ such that 
$\eta = (\xi \setminus \{A,B\}) \cup \{A \cup B\}$.
\end{defn}



We summarise our discussion by giving a (preliminary) definition of the limiting coalescent process $\Pi^n$, and assume that it is defined, along with $\Psi$, on a probability space $({\bf \Omega}, {\bf \mathcal{F}}, {\bf P})$. 
For ease of exposition, we restrict ourselves for now to the case where 
$\Psi$ satisfies
\begin{equation} \label{eq:discretePsi}
\Psi \big ( [0,T] \times \Delta  \big ) < \infty
\quad 
\textnormal{ almost surely for all } T > 0,
\end{equation}
which holds if and only if $\Xi$ satisfies 
$\int_\Delta \langle x,x \rangle^{-1} \, \Xi (\dd x) < \infty$.


\begin{defn}[The inhomogeneous $(\Psi,c_{\textnormal{pair}}^{})$-coalescent for integrable intensity]
\label{def:inhomcoalescentpreliminary}

Let $\Psi$ be a Poisson point process on 
$[0,\infty) \times ( \Delta \setminus \{ \mathbf{0} \}  )$ with intensity
\begin{equation*}
\dd t \, \frac{1}{\langle x,x \rangle} \Xi (\dd x)
\end{equation*}
and let 
$
c_{\textnormal{pair}}^{} \defeq 
1 - \Xi(\Delta \setminus \{ \mathbf{0} \})          
$. Assume that
$\int_\Delta
\langle x,x \rangle^{-1} \, \Xi (\dd x) < \infty$.
Then, the inhomogeneous ($n$)-$(\Psi,c_{\textnormal{pair}}^{})$-coalescent $\Pi^n$ is an inhomogeneous Markov chain taking values in the set $\cE_n$ of partitions of $[n]$.
Conditional on $\Psi$ and for each 
$(t,x) \in \Psi$, $\Pi^n$ performs at time $t$ an $x$-merger, meaning that for all $\xi,\eta \in \cE_n$, 
\begin{equation*}
{\bf P} \big ( \Pi_t^n = \eta \mid \Psi, \Pi^n_{t-} = \xi  \big )
=
p(x;\xi,\eta)
\end{equation*}
with $p(x;\xi,\eta)$ as in \eqref{eq:paintboxtransitionprime}. For
$\xi \vdash \eta$,
$\Pi^n$ jumps from $\xi$ to $\eta$ at an additional exponential rate $c_{\textnormal{pair}}^{}$, independently of $\Psi$.
\end{defn}

\begin{remark}
In general, we do not want to assume that 
$\int_\Delta
\langle x, x \rangle^{-1} \, \Xi (\dd x) < \infty$.
Then, $\Psi \cap (s,t) \times \Delta$ may contain infinitely many atoms for any $0 < s < t < \infty$, which means that the potential jump times are dense in $[0,\infty)$ and can thus not be ordered chronologically. To get around this, we note that $p(x;\xi,\eta)$ is of order $\langle x,x\rangle$
for all $\xi \neq \eta$,
whence the set of jump times that are not void is always discrete by virtue of $\int_\Delta \Xi ( \dd x) < \infty$. For details, see Definition \ref{def:coalescents}.
\hfill \mbox{$\Diamond$}
\end{remark}

Note that although the prelimiting coalescent $\Pi^{N,n}$ a-priori lives on the larger state space $\cS_n$ to take the grouping of ancestral genes into diploid individuals into account, its scaling limit $\Pi^n$ only takes values in $\cE_n$, the set of partitions. This is because the absence of selfing means that ancestral genes that belong to the same ancestral individual will in the very next generation be dispersed across different individuals. Note that the same phenomenon will occur if selfing can occur with positive probability, but is sufficiently rare. The time required to come together in a common ancestral individual without coalescing is of the same order as the coalescence time, namely $1/c_N^{}$. 
Thus, the total amount of time for which individuals carry two distinct ancestral lineages will be asymptotically negligible. 

We now state the main result of this paper. For each $N\in\mathbb{N}$, we let  $(\Omega^{(N)},\mathcal{F}^{(N)}, \P^{(N)})$
be a probability space on which  the i.i.d.\ random variables $(M_{c,i,g})_{i \in [N], c \in \{0,1\}, g \in \mathbb{Z}}$ (in Definition \ref{Def:M}) and 
the random pedigree
$\cG^{(N)}=
(P^{(N)}_{0}, P^{(N)}_{1})$ 
are defined. The latter is induced by
the i.i.d.\ symmetric random offspring matrices  
$\big (V^{(g)} \big )_{g \in \N}$ from Subsection~\ref{subsec:offspringnumbers}, satisfying the assumptions \eqref{eq:fixedN}, \eqref{eq:exchangeable},  \eqref{eq:limcN=0} and \eqref{eq:PhiNconv}  and will be specified in Definition \ref{def:pedigree}.
Following \cite{DiamantidisEtAl2024}, we denote by $\cA^{(N)}$ the  $\sigma$-algebra generated by the random pedigree $\cG^{(N)}$ and the initial positions 
$\big ( X_j(0) \big )_{j \in [n]}$ of the sampled genes within it. 

\begin{thm} \label{thm:fddconvergence}
Assume that  \eqref{eq:fixedN}, \eqref{eq:exchangeable},  \eqref{eq:limcN=0} and \eqref{eq:PhiNconv} hold. 
Fix $n\in\mathbb{N}$ and $\xi_0 \in \mathcal{S}_n$. For all
$N \geqslant n$ we let $\Pi^{N,n}$ evolve according to Definition~\ref{def:Pi^{N.n}} starting from
  $\Pi^{N,n}_0 = \xi_0$. Then,
for any $k\in\N$,  $t_1^{},\ldots,t_k^{} > 0$
and  $\xi_1^{}, \ldots, \xi_k^{} \in \cE_n^{}$, the following convergence holds in distribution:
\begin{equation*}
\P^{(N)} \big (  \Pi^{N,n}_{\widetilde t_1^{}} = \xi_1^{}, \ldots, \Pi^{N,n}_{\widetilde t_k^{}} = \xi_k^{}         \big  | \cA^{(N)}          \big )
\mathop{\longrightarrow}_{N\to\infty}
{\bf P} \big (  \Pi^{n}_{t_1^{}} = \xi_1^{}, \ldots, \Pi^{n}_{t_k^{}} = \xi_k^{}         \big  | \Psi       \big ),
\end{equation*}
where $\widetilde t^{} \defeq \lfloor t^{} / c_N^{} \rfloor$, and $\Pi^{n}$ is
the inhomogeneous $(\Psi,c_{\textnormal{pair}}^{})$-coalescent in Definition~\ref{def:coalescents} with initial condition $\Pi^{n}_0=\xi_0$.
\end{thm}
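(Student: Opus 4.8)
The plan is to follow the ``two conditionally independent copies'' strategy imported from random walks in random environments and used in \cite{DiamantidisEtAl2024}, reducing the quenched statement to a family of \emph{annealed} convergence statements through the method of moments. Both $Y_N \defeq \P^{(N)}(\Pi^{N,n}_{\widetilde t_1}=\xi_1,\ldots,\Pi^{N,n}_{\widetilde t_k}=\xi_k\mid\cA^{(N)})$ and its candidate limit $Z\defeq{\bf P}(\Pi^n_{t_1}=\xi_1,\ldots,\Pi^n_{t_k}=\xi_k\mid\Psi)$ are $[0,1]$-valued, and the Hausdorff moment problem is determinate, so it suffices to show ${\bf E}[Y_N^m]\to{\bf E}[Z^m]$ for every $m\in\N$. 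By conditional independence of the copies given the pedigree (resp.\ given $\Psi$), these moments are exactly the annealed probabilities that $m$ coalescents run on one common pedigree with independent Mendelian coins (resp.\ $m$ coalescents that are conditionally i.i.d.\ given $\Psi$) all pass through $\xi_1,\ldots,\xi_k$ at the prescribed rescaled times. The case $m=1$ is, up to additionally tracking the GLIP times, the annealed theorem of \cite{BirknerEtAl2018}, and the case $m=2$ carries all the remaining difficulty; general $m$ is entirely analogous, with a larger but similar bookkeeping state space.

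I would therefore concentrate on the pair $(\Pi^{N,n},\widetilde\Pi^{N,n})$. After averaging over the pedigree it becomes a time-homogeneous Markov chain on an enlarged state space $\cH_n$ recording, besides the two partitions, which blocks of the two copies currently lie in a common diploid individual and as which of its two genes (Remark~\ref{rmk:makingPimarkovian}); by Lemma~\ref{L:ancestrallines} and the explicit construction of the Cannings pedigree one writes down its one-generation transition matrix conditionally on the offspring numbers $(V_i)$ of that generation (Subsection~\ref{subsec:annealedtransitionmatrices}). The analysis then splits according to whether a generation is an $\varepsilon$-GLIP. In a non-$\varepsilon$-GLIP generation the state changes in a single step only with probability $O(c_N)$, and a separation-of-timescales argument in the spirit of \cite{Mohle1998a} (Subsection~\ref{subsec:timescalesep}) coarse-grains $\lfloor c_N^{-1}\rfloor$ such steps: asymptotically each pair of blocks \emph{within each copy} merges at its own exponential clock, and---because distinct ancestral individuals and independent coins are involved---these clocks are asymptotically \emph{independent across the two copies}; the aggregate rates are identified in Subsection~\ref{subsec:aggregates}, with the computations relegated to Appendix~\ref{app:lemmaproof}. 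In an $\varepsilon$-GLIP generation with ranked offspring frequencies close to $x$, Lemma~\ref{lem:largerep} shows that each copy independently performs an approximate $\varphi(x)$-merger (same paintbox, independent bucket assignments, the independence again stemming from the independent coins). Finally, the $\varepsilon$-GLIP generations, viewed as a marked point process of (rescaled time, frequency vector), converge by~\eqref{eq:PhiNconv} to an $\varepsilon$-truncation of $\Psi$. Assembling these pieces yields annealed convergence of the time-rescaled pair to two coalescents that, given $\Psi$, are conditionally independent $(\Psi,c_{\textnormal{pair}})$-coalescents.

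The principal obstacle is the one flagged in the introduction: when $\int_\Delta\langle x,x\rangle^{-1}\,\Xi(\dd x)=\infty$ the atoms of $\Psi$ are dense in time, so the limit cannot be built by ordering its jumps, and the argument above---clean only for the finitely many $\varepsilon$-GLIPs on a bounded interval---must be upgraded as $\varepsilon\downarrow0$. Here I would use the stochastic-flow construction of Section~\ref{sec:inhomcoal}: the coagulator operator realizes $\Pi^n$ (Definition~\ref{def:coalescents}) as the $\varepsilon\downarrow0$ limit of truncated coalescents $\Pi^{n,\varepsilon}$ that react only to $\varepsilon$-GLIPs, with all other pairwise activity absorbed into a single rate $c^\varepsilon_{\textnormal{pair}}\uparrow c_{\textnormal{pair}}$, together with quantitative control of the $\Pi^{n,\varepsilon}$-versus-$\Pi^n$ discrepancy. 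One then runs the previous paragraph for fixed $\varepsilon$ to get convergence of $Y_N$ towards the $\Pi^{n,\varepsilon}$-analogue of $Z$, with the approximation error---from non-$\varepsilon$-GLIP generations doing more than a single pairwise merge, and from near-coincidences of the two copies' pairwise merges---bounded uniformly in $N$ by a quantity that vanishes as $\varepsilon\downarrow0$; letting $\varepsilon\downarrow0$ then closes the moment computation. A last technical point: the prelimit may place two lineages in the same individual, but without selfing this persists for only $O(1)$ generations and is negligible on the $c_N^{-1}$ time scale, so $\mathsf{cd}(\Pi^{N,n}_{\widetilde t})$ and $\Pi^{N,n}_{\widetilde t}$ agree in the limit and the limiting process lives on $\cE_n$ as claimed. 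I expect the $\varepsilon$-truncation/flow step to absorb most of the effort; the one-generation transition estimates, though lengthy, are routine refinements of those in \cite{BirknerEtAl2018,DiamantidisEtAl2024}.
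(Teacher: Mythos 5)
Your core machinery coincides with the paper's: a conditionally independent pair coupling on a shared pedigree, Möhle's separation-of-timescales lemma to coarse-grain the dispersal dynamics, the aggregate-rate computations of Subsection~\ref{subsec:aggregates}, the $\varepsilon$-GLIP truncation, convergence of $\Psi^{(N)}_\varepsilon$ to $\Psi_\varepsilon$ as point processes, and the coagulator/flow construction to handle dense jump times. Where you differ is the outer wrapper: you propose the full method of moments, $\E[Y_N^m]\to{\bf E}[Z^m]$ for every $m$, whereas the paper only ever controls second moments. It does this by interposing the $\varepsilon$-naive coalescent $\Pi^{(N,\varepsilon)}$, whose conditional finite-dimensional distribution $\cY^{(N,\varepsilon)}=\P^{(N)}(\cdots\mid\Psi^{(N)}_\varepsilon)$ is an explicit functional of the $\cA^{(N)}$-measurable point process $\Psi^{(N)}_\varepsilon$; the distance $\|Y_N-\cY^{(N,\varepsilon)}\|_{L^2}$ is then computed from exactly three pair couplings (Lemmas~\ref{lem:qpure}--\ref{lem:qpureepsilon}), and the convergence $\cY^{(N,\varepsilon)}\to\cZ$ is obtained not by moments at all but by weak convergence of the driving point processes together with the continuity statement of Lemma~\ref{lem:inhomcoalescentfddcontinuity}, the two pieces being glued with the Lévy--Prokhorov metric. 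Two remarks on your version. First, taken literally it is heavier than necessary: identifying $\lim_N\E[Y_N^m]$ with ${\bf E}[Z^m]$ for general $m$ requires both an $m$-fold coupling on the state space generalising $\cH_n$ \emph{and} an independent computation of the annealed $m$-point motion of the limiting $(\Psi,c_{\mathsf{pair}})$-coalescent (a nontrivial Poisson-functional calculation when the atoms of $\Psi$ are dense), neither of which the paper needs. Second, once you introduce the $\varepsilon$-truncated intermediate as in your third paragraph, the higher moments come for free: since all quantities lie in $[0,1]$ one has $|\E[Y_N^m]-\E[(\cY^{(N,\varepsilon)})^m]|\leqslant m\,\E[|Y_N-\cY^{(N,\varepsilon)}|]$, so only the $m=2$ case (equivalently the $L^2$ bound) carries content, and $\E[(\cY^{(N,\varepsilon)})^m]\to{\bf E}[(\cZ^\varepsilon)^m]$ follows from distributional convergence of $\cY^{(N,\varepsilon)}$ by bounded continuity of $y\mapsto y^m$. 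In other words, your moment framework either reduces to the paper's argument or duplicates work; there is no gap, but you should be aware that the pair coupling plus the explicit intermediate is already sufficient to conclude.
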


\begin{proof}
The proof is contained in Sections \ref{sec:pedigree}-\ref{sec:paircouplings} and summarized at the end of Section~\ref{sec:inhomcoal}.
\end{proof}

\begin{remark}
\label{rmk:choiceofstart}
Recall from Section~\ref{sec:genealogies} and Definition~\ref{def:Pi^{N.n}} in particular that the evolution of 
$\Pi^{N,n}$ depends, at least in principle, on the choice of the initial positions 
$X_1(0),\ldots,X_{|\xi_0^{}|}(0)$ of the genealogies associated with the blocks of $\Pi^{N,n}$. However, as detailed in Section~\ref{sec:pedigree}, the pedigree is exchangeable, that is, its law is invariant with regard to how individuals are labeled. This means that the law of $\Pi^{N,n}$ is the same for \emph{any deterministic} choice of pairwise distinct $X_1(0),\ldots,X_{|\xi_0^{}|}(0)$ that is consistent with the grouping of samples into individuals in the sense that $I_i(0) = I_j(0)$ for all 
$\{C_i,C_j \} \in \xi_0^{}$.  

Consequently, we may even start with a random sample, under the condition that it be \emph{independent of the pedigree}. On the other hand, it is clear that letting the sample depend on the pedigree by choosing, say, members of the same family whenever possible, will affect the evolution of
$\Pi^{N,n}$ in a non-trivial way, incongruent with Theorem~\ref{thm:fddconvergence}.
\hfill \mbox{$\Diamond$}
\end{remark}

\begin{remark} 
\label{rmk:shortlivedjumps}
Theorem~\ref{thm:fddconvergence} implies in particular that 
$\P^{(N)}\big (\Pi^{N,n}_{\widetilde t} \in \cS_n \setminus \cE_n \, \big | \,
\cA^{(N)} \big )$ converges to $0$ for all fixed $t > 0$. On the other hand, recall that for a pair of blocks to coalesce, the two associated lineages need to first find a common parent. Conditionally on having found a common parent, they will pick the same gene (and thus coalesce) with probability $1/2$. But this also means that, with probability $1/2$, the ancestral genes, though located in the same parent, remain distinct. On the level of the coalescent, this results in a transition to a state in $\cS_n \setminus \cE_n$. 
Consequently, 
\begin{equation*}
\P^{(N)} \big (\Pi^{N,n}_{\widetilde t} \in \cS_n \setminus \cE_n
\textnormal{ for some } t > 0 \, \big | \, \cA^{(N)} \big ) 
\stackrel{N \to \infty}{\longrightarrow} 1.
\end{equation*}
However, states in $\cS_n \setminus \cE_n$ are short-lived as, due to the absence of selfing (see above), these two genes are quickly scattered 
across different parent individuals.
\hfill \mbox{$\Diamond$}
\end{remark}

In particular, Remark~\ref{rmk:shortlivedjumps} shows that $\Pi^n$ cannot be the limit of $\Pi^{N,n}$ in the Skorokhod (J1) topology. However, it can be shown that $\mathsf{cd} (\Pi^{N,n})$, the complete dispersal of $\Pi^{N,n}$, does converge in the Skorokhod sense.

\begin{remark} \label{rmk:tyukin}
In the pure Kingman case, i.e.\ the special case when $\Xi = \Xi' = 0$, a version of Theorem~\ref{thm:fddconvergence} has been proved in~\cite{TyukinThesis2015}; see also Sections~\ref{ex:diploidWF} and~\ref{ex:diploidind}.
\hfill \mbox{$\Diamond$}
\end{remark}

By averaging over $\cA^{(N)}$ and $\Psi$ in Theorem~\ref{thm:fddconvergence}, we recover the annealed limit theorem.
\begin{coro}[\cite{BirknerEtAl2018}, Theorem~1.1]
  \label{thm0}
  Assume \eqref{eq:fixedN}, \eqref{eq:exchangeable},
  \eqref{eq:limcN=0} and \eqref{eq:PhiNconv} hold and
  $\Pi^{N,n}_0 = \xi_0 \in \mathcal{S}_n$ for all $N$.  Then
  \begin{equation*}
    \left( \Pi^{N,n}_{\lfloor t/c_N \rfloor} \right)_{t\in (0,\infty)}
    \mathop{\longrightarrow}_{N\to\infty}
    \left( \Pi^{n,\mathsf{avg}}_t \right)_{t\in (0,\infty)}
  \end{equation*}
  in the sense of finite-dimensional distributions in $\mathcal{S}_n$.  The limit process $\Pi^{n,\mathsf{avg}}$ is an
  $n$-$\Xi$-coalescent starting from $\Pi^{n,\mathsf{avg}}_0 = \mathsf{cd}(\xi_0)$
  with $\Xi$ from \eqref{eq:Xi}.
\end{coro}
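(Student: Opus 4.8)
The plan is to deduce Corollary~\ref{thm0} from Theorem~\ref{thm:fddconvergence} by integrating out the environment on both sides. Fix $n$, $\xi_0 \in \mathcal{S}_n$, times $t_1 < \dots < t_k$ and states $\xi_1,\dots,\xi_k$, but now take $\xi_1,\dots,\xi_k \in \mathcal{S}_n$ rather than in $\mathcal{E}_n$. First I would reduce to the case $\xi_j \in \mathcal{E}_n$: by Remark~\ref{rmk:shortlivedjumps}, for each fixed $t>0$ we have $\P^{(N)}(\Pi^{N,n}_{\widetilde t} \in \mathcal{S}_n \setminus \mathcal{E}_n) \to 0$, so only the components of the finite-dimensional distributions living in $\mathcal{E}_n^k$ survive in the limit, and the annealed $\Xi$-coalescent $\Pi^{n,\mathsf{avg}}$ started from $\mathsf{cd}(\xi_0)$ likewise spends no time (at any fixed instant) in $\mathcal{S}_n \setminus \mathcal{E}_n$ when there is no selfing. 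Hence it suffices to show, for $\xi_1,\dots,\xi_k \in \mathcal{E}_n$,
\[
\P^{(N)}\big(\Pi^{N,n}_{\widetilde t_1} = \xi_1, \dots, \Pi^{N,n}_{\widetilde t_k} = \xi_k\big) \xrightarrow[N\to\infty]{} {\bf P}\big(\Pi^{n}_{t_1} = \xi_1, \dots, \Pi^{n}_{t_k} = \xi_k\big),
\]
where the left side is the unconditional probability, i.e.\ the $\cA^{(N)}$-conditional probability of Theorem~\ref{thm:fddconvergence} integrated against $\P^{(N)}$, and the right side is the $\Psi$-conditional probability of Theorem~\ref{thm:fddconvergence} integrated against ${\bf P}$.

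The core step is a bounded-convergence argument. Write $F_N \defeq \P^{(N)}(\Pi^{N,n}_{\widetilde t_1} = \xi_1, \dots \mid \cA^{(N)})$, a random variable on $(\Omega^{(N)},\mathcal{F}^{(N)},\P^{(N)})$ taking values in $[0,1]$, and $F \defeq {\bf P}(\Pi^{n}_{t_1} = \xi_1, \dots \mid \Psi)$, a random variable on $({\bf \Omega},{\bf \mathcal{F}},{\bf P})$ in $[0,1]$. Theorem~\ref{thm:fddconvergence} gives $F_N \toL F$. Since $x \mapsto x$ is bounded and continuous on $[0,1]$, convergence in distribution yields $\E^{(N)}[F_N] \to {\bf E}[F]$. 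But $\E^{(N)}[F_N] = \P^{(N)}(\Pi^{N,n}_{\widetilde t_1} = \xi_1, \dots, \Pi^{N,n}_{\widetilde t_k} = \xi_k)$ by the tower property, and ${\bf E}[F] = {\bf P}(\Pi^{n}_{t_1} = \xi_1, \dots, \Pi^{n}_{t_k} = \xi_k)$ likewise. This establishes the displayed convergence of unconditional finite-dimensional distributions.

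It then remains to identify the annealed limit law as that of an $n$-$\Xi$-coalescent. I would do this by computing the one-dimensional marginals and transition structure of the unconditional limit directly from Definition~\ref{def:coalescents}: conditionally on $\Psi$, $\Pi^n$ jumps at each atom $(t,x) \in \Psi$ according to the paintbox kernel $p(x;\cdot,\cdot)$ and, in addition, performs each pair-coalescence at rate $c_{\textnormal{pair}}$. Averaging over the Poisson point process $\Psi$ with intensity $\dd t\, \langle x,x\rangle^{-1}\Xi(\dd x)$, the superposition of the $x$-mergers produces, by the Poisson thinning/superposition calculus, a time-homogeneous Markov jump process on $\mathcal{E}_n$ whose rate from $\xi$ to $\eta \neq \xi$ is $\int_{\Delta \setminus \{\mathbf 0\}} p(x;\xi,\eta)\, \langle x,x\rangle^{-1}\,\Xi(\dd x)$ plus $c_{\textnormal{pair}}\cdot \1(\xi \vdash \eta)$; recalling $c_{\textnormal{pair}} = \Xi(\{\mathbf 0\})$ and that $p(x;\xi,\eta) \to \1(\xi\vdash\eta)\langle x,x\rangle$ as $x \to \mathbf 0$, this is precisely the generator of the $n$-$\Xi$-coalescent in the sense of \cite{Schweinsberg2000} with $\Xi = \Xi_0 + a\delta_{\mathbf 0}$, $a = c_{\textnormal{pair}}$. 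The integrability $\int \langle x,x\rangle^{-1}\Xi(\dd x) < \infty$ used in Definition~\ref{def:inhomcoalescentpreliminary} is not needed here since one always has $\int \Xi(\dd x) < \infty$ and $p(x;\xi,\eta) = O(\langle x,x\rangle)$, so the averaged rates are finite; in the general (possibly dense) case one appeals instead to the construction in Section~\ref{sec:inhomcoal}.

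The main obstacle is essentially bookkeeping rather than a genuine difficulty: one must make sure that the reduction from $\mathcal{S}_n$-valued to $\mathcal{E}_n$-valued finite-dimensional distributions is done correctly (the annealed process genuinely visits $\mathcal{S}_n \setminus \mathcal{E}_n$, but only on a Lebesgue-null set of times, so fixed-time marginals are unaffected — this is exactly the content of Remark~\ref{rmk:shortlivedjumps} combined with the matching fact for $\Pi^{n,\mathsf{avg}}$), and that the Poisson averaging of the inhomogeneous $(\Psi,c_{\textnormal{pair}})$-coalescent reproduces the correct $\Xi$-coalescent rates, including the bookkeeping of the atom at $\mathbf 0$ contributing the Kingman part. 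Both of these are routine given the machinery already set up, and in fact the cleanest exposition simply invokes Theorem~\ref{thm:fddconvergence}, the bounded-convergence step above, and a one-line comparison of generators.
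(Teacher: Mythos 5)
Your proposal is correct and is essentially the paper's own argument: the paper derives the corollary in one line by "averaging over $\cA^{(N)}$ and $\Psi$ in Theorem~\ref{thm:fddconvergence}", which is exactly your bounded-convergence step for the $[0,1]$-valued conditional probabilities, combined with the reduction to $\cE_n$ via Remark~\ref{rmk:shortlivedjumps} and the standard Poissonian identification of the $\Psi$-averaged $(\Psi,c_{\textnormal{pair}})$-coalescent as the $\Xi$-coalescent with $\Xi=\Xi_0+c_{\textnormal{pair}}\,\delta_{\mathbf 0}$. You merely spell out details the paper delegates to \cite{Schweinsberg2000} and \cite{BirknerEtAl2018}; there is no substantive divergence.
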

See also \cite[Corollary~1.2]{BirknerEtAl2018} and the discussion around it. We finish with some remarks regarding criteria for the presence/absence of large mergers.

\begin{remark}
By \eqref{eq:cN1},
\begin{equation*}
\frac{1}{2 c_N} \int_{\Delta \setminus \{ \mathbf{0} \}} \langle x,x \rangle \Phi_N (\dd x) 
=\frac{1}{8 c_N N^2} \E^{(N)}\left[\sum_{i=1}^NV_{i}^2\right]
\sim
\frac{\E^{(N)}\left[V_{1}^2\right]}{\E^{(N)}\left[V_{1}^2\right] - \E^{(N)}\left [V_1^{}\right]} \quad 
\textnormal{as } N \to \infty.
\end{equation*}
Note that the limsup of the r.h.s.\ is strictly larger than $1$ if $\liminf_{N\to\infty}\E^{(N)} [V_1^2]<\infty$, in spite of the fact that in \eqref{eq:PhiNconvprime} the limit is $\leqslant 1$ if $\Delta \setminus \{ \mathbf{0}  \}$
is replaced by a subset $A$ bounded away from 
$\mathbf{0}$. This reflects that if $\lim_{N\to\infty}\E^{(N)} [V_1^2]<\infty$, then although each $\Phi_N$ is concentrated on $\Delta \setminus \{\mathbf{0}\}$, part (or possibly all) of the mass of the rescaled $\Phi_N/(2c_N)$ ``escapes'' to $\mathbf{0}$ in the limit as $N\to\infty$.
\hfill \mbox{$\Diamond$}
\end{remark}

\begin{remark}
Suppose Assumption~\eqref{eq:PhiNconv} holds. Then
\begin{align}
  \label{eq:cpair.r1}
  c_{\mathsf{pair}}
   =& \lim_{\varepsilon \downarrow 0}
  \lim_{N\to\infty} \frac{\E^{(N)}\Big[ V_1(V_1-1) \1\big( V_1^2 + V_2^2 +\cdots + V_N^2 \leqslant 4 \varepsilon N^2 \big) \Big]}{\E^{(N)}[V_1(V_1-1)]}. 
\end{align}
Note that \eqref{eq:cpair.r1} is equivalent to
\begin{align}
  \label{eq:cpair.r1.var}
  c_{\mathsf{pair}}
   =& \lim_{\varepsilon \downarrow 0}
  \lim_{N\to\infty} \frac{\E^{(N)}\Big[ V_1^2 \1\big( V_1^2 + V_2^2 +\cdots + V_N^2 \leqslant 4 \varepsilon N^2 \big) \Big]-2}{\E^{(N)}[V_1^2]-2} 
\end{align}
because $\E^{(N)}[V_1]=2$ and $\P^{(N)}(V_1^2 + V_2^2 +\cdots + V_N^2 \leqslant 4 \varepsilon N^2) \to 1$ as $N\to \infty$ by Assumption~\eqref{eq:limcN=0}. 

In Appendix~\ref{app:sufficient},
we show that $c_{\mathsf{pair}}=1$ if any of the followings hold: 
\begin{itemize}
\item[(i)]  $(V_i^2 )_{1\leqslant i\leqslant N,\,N\in\mathbb{N}}$ is uniformly integrable (e.g. $\sup_{N\in\mathbb{N}} \E^{(N)}[V_1^{2p}]<\infty$ for some $p\in(1,\infty)$);
 \item[(ii)]  $\lim_{N\to\infty}\frac{\E^{(N)}[V_1^{2p}]}{N^{p-1}}=0$ for some $p\in(1,\infty)$.
 \hfill \mbox{$\Diamond$}
\end{itemize}
\end{remark}

\begin{remark}
One might wonder if $\lim_{N\to\infty} \E^{(N)}[V_1^2] < \infty$ 
implies $c_{\mathsf{pair}}=1$. The answer is no, as illustrated by the model from~\cite{DiamantidisEtAl2024}, which clearly exhibits large mergers. In this case, $\lim_{N\to\infty} \E^{(N)}[V_1^2] < \infty$ and $c_{\mathsf{pair}}<1$ (see Section~\ref{ex:HRcouple}).  To compute $\E^{(N)}[V_1^2]$, recall that 
$\psi \in (0,1]$ was used 
in~\cite{DiamantidisEtAl2024} to denote 
the fraction of offspring of the productive pair and that the probability of the occurrence of such a pair per generation is 
$\alpha / N$, we have
\begin{equation*}
\E^{(N)} [V_1^2   ] \leqslant C \big ( \alpha N^{-1} N^{-1} (\psi N)^2 + (1-\alpha) 2N  N^{-1} (1 - N^{-1}) + (2N)^2 N^{-2} \big ).
\end{equation*}  
for some constant $C$.
Note that, in a Wright-Fisher generation, $V_1 \sim \textnormal{Binomial} (2N, N^{-1})$ and in a ``high-offspring generation'', individual $1$ belongs to the large family with probability of order $N^{-1}$. 

\hfill \mbox{$\Diamond$}
\end{remark}

Readers who are primarily interested in examples and simulations can skip to Sections \ref{sec:examples} and \ref{sec:data}.

\section{Pedigrees and gene genealogies in the diploid Cannings model} \label{sec:pedigree}

Here we provide a detailed formal description of the random pedigree implied by the Cannings model introduced in Subsection~\ref{subsec:offspringnumbers}, and use this to derive the transitions of the coalescent process, particularly for GLIPs.

\subsection{The pedigree conditional on offspring numbers}
\label{subsec:condpedigree}

We will now give a precise description of the law of the pedigree, conditional on the (random) offspring matrices introduced in Subsection~\ref{subsec:offspringnumbers}. 
This means (see Section~\ref{sec:genealogies}) that we need to specify, for each $N$, two random maps $P_{0}^{(N)}$ and
$P_{1}^{(N)}$ from $[N] \times \N_0$ to $[N]$, mapping each individual 
$(i,g)$ to its parents $P_{0,i,g}^{(N)}$ and $P_{1,i,g}^{(N)}$.  



Conditional on the offspring matrices
$\big (V^{(g)} \big )_{g \in \N}$,
we determine  $P^{(N)}_0$ and 
$P^{(N)}_1$
in two steps. In the first step, for each generation $g \in \N_0$, we will assign each child $(k,g)$ to a pair of parents, using the following
`balls-in-boxes' construction based on the offspring numbers 
$(V_{i,j}^{(g+1)})_{1 \leqslant i, 
j \leqslant N}$ in the previous generation (recall that generations are numbered backward in time). This procedure can be understood as a discrete version of the paintbox construction in Subsection~\ref{subsec:limitingcoalescent}.
\begin{itemize}
\item We consider an array of $N$ boxes, $V_{i,j}^{(g+1)}$ of them corresponding to the pair
$\{i,j\}$ of parents for each $1 \leqslant i < j \leqslant N$. 
\item Into these boxes, $N$ balls are thrown uniformly so that each ball is sorted into a different box. Each ball represents a different child, and ball number $k$ being sorted into one of the $V_{i,i'}^{(g+1)}$ boxes corresponding to $\{i,i'\}$ means that individual $k$ is a child of parents
$i$ and $i'$.
\end{itemize}
Thus, we have
for any child $(k,g)$ and any $i,i' \in [N]$ (with $i\neq i'$ because we exclude selfing), 
\begin{align}
  & \P^{(N)}\left( \big \{ P^{(N)}_{0,k,g}, 
     P^{(N)}_{1,k,g}   \big \}= \{i,i'\}
    \, \Big| \, (V_{i,j}^{(g+1)})_{1 \leqslant i < j \leqslant N} \right)
 \; = \frac{V_{i,i'}^{(g+1)}}{N}.
\end{align}

More generally, 
for $m \in [N]$ pairwise distinct children $k_1,\dots,k_m \in [N]$ and
$i_1,\dots,i_m,i'_1,\dots,i'_m \in [N]$ with $i_\ell \neq i'_\ell$
for $\ell=1,\dots,m$, we have the following expression for
the conditional probability that $i_\ell$ and  $i'_\ell$ are the parents of the child $k_{\ell}$ for each $\ell\in\{1,\dots,m\}$, given the offspring numbers.
\begin{align} \label{eq:ballsinboxes1}
  & \P^{(N)} \left( \big \{ P^{(N)}_{0,k_{\ell}^{},g}, 
     P^{(N)}_{1,k_\ell^{},g}           \big \} = \{ i_\ell, i'_\ell \} \text{ for } \ell=1,\dots,m
    \, \Big| \, (V_{i,j}^{(g+1)})_{1 \leqslant i < j \leqslant N} \right) \notag \\
  & \; = \frac{1}{ (N)_{m\downarrow}}
    \prod_{\ell=1}^m \left( V_{i_\ell,i'_\ell}^{(g+1)} - \sum_{k=1}^{\ell-1} \1\left(\{ i_\ell,i'_{\ell} \} = \{ i_k,i'_k \} \right) \right)
    = \frac{1}{(N)_{m\downarrow}} \prod_{1\leqslant i < j \leqslant N}  (V_{i,j}^{(g+1)})_{b_{ij}\downarrow},
\end{align}
where  we write
\[
  b_{ij} := \#\{ 1 \leqslant \ell \leqslant m : \{ i_\ell, i'_\ell\} = \{i,j\} \}, \quad 1 \leqslant i < j \leqslant N
\]
for the number of children among $\{k_1,\dots,k_m\}$ whose parents are $\{i,j\}$,
and we used the convention $V_{i,j}^{(g+1)}=V_{j,i}^{(g+1)}$.
Note that for any pair $\{i_\ell^{}, i_\ell' \}$ of parents, their first designated child $k_1^{}$ has $V_{i_\ell^{}, i_\ell'}^{(g+1)}$ boxes to choose from, the next one only $V_{i_\ell^{}, i_\ell'}^{(g+1)} - 1$ and so on. In general, the number of boxes for child $i_\ell^{}$ to choose from is reduced by $\sum_{k=1}^{\ell-1} \1\left(\{ i_\ell,i'_{\ell} \} = \{ i_k,i'_k \} \right)$, which is how many children the designated parental pair had among the children $k_1, \ldots k_{\ell - 1}$. 

In the second step, conditionally on the assignment of children to their parents, the roles of the parents are decided via independent tosses of a fair coin. Independently for each child $j$ with parents $i$ and $i'$, we let $i$ be the $0$-parent and $i'$ the $1$-parent (and vice-versa) with probability $1/2$, whence
\eqref{eq:ballsinboxes1}
gives
\begin{align} \label{eq:ballsinboxes}
  & \P^{(N)} \left( P^{(N)}_{0,k_\ell^{},g} = i_\ell^{}, 
     \, P^{(N)}_{1,k_\ell^{},g}  =  i'_\ell  \text{ for } \ell=1,\dots,m
    \, \Big| \, (V_{i,j}^{(g+1)})_{1 \leqslant i < j \leqslant N} \right) \notag \\
  & \; = \frac{1}{2^m}
  \P^{(N)}\left( \big \{ P^{(N)}_{0,k_\ell^{},g}, 
     \, P^{(N)}_{1,k_\ell^{},g} \big \} = \{ i_\ell, i'_\ell \} \text{ for } \ell=1,\dots,m
    \, \Big| \, (V_{i,j}^{(g+1)})_{1 \leqslant i < j \leqslant N} \right) \notag \\
  & \; =\frac{1}{2^m (N)_{m\downarrow}} \prod_{1\leqslant i < j \leqslant N}  (V_{i,j}^{(g+1)})_{b_{ij}\downarrow}.
\end{align}

Averaging with respect to $V^{(g+1)}$ gives 
\begin{align}
\label{pedigree.law}
  & \P^{(N)}\left( P^{(N)}_{0,k_\ell^{},g} = i_\ell,
     P^{(N)}_{1,k_\ell^{},g}  = i'_\ell \text{ for } \ell=1,\dots,m \right) \notag \\
    & \quad = \frac{1}{2^m(N)_{m\downarrow}} \E^{(N)}\left[ \prod_{1 \leqslant i < j \leqslant N}  (V_{i,j}^{(g+1)})_{b_{ij}\downarrow} \right]
\end{align}
for any $g \in \N_0$. In particular, we see that the random vector $\big( P^{(N)}_{0,k,g}, P^{(N)}_{1,k,g}
\big)_{1\leqslant k \leqslant N} \in  
([N] \times [N])^{N}$
is exchangeable for each $g \in \N$. Furthermore, by
construction, as a family indexed by $g$, these vectors are i.i.d. 
This completes our description of the distribution of the coalescing random walks given in Section~\ref{sec:genealogies}, and thus of the coalescent $\Pi^{N,n}$.
\begin{defn}\label{def:pedigree}
We use $\cG^{(N)}$ as a shorthand for the pedigree 
$\big ( P^{(N)}_{0}, P^{(N)}_1 \big)$ given by the construction above.
Its time-slices $\cG_g^{(N)} \defeq 
\big ( P_{0,\cdot,g}^{(N)}, P_{1,\cdot,g}^{(N)}  \big )
$ are independent and their conditional law given $V^{(g+1)}$ is described in 
\eqref{eq:ballsinboxes}. 
\end{defn}

When fixing a realisation of $\cG^{(N)}$, the process $\Pi^{N,n}$ is not  Markovian due to the fact that its evolution can only be determined by considering the underlying coalescing random walks $X_1,\ldots,X_n$ modeling the actual genealogies (see Definition~\ref{def:Pi^{N.n}}); this is because fixing the pedigree destroys the exchangeability within generations. However, because the law of the (time-slices of) the pedigree is exchangeable given the offspring numbers, by considering a partial average and fixing only the offspring matrices $V^{(g)}$ while averaging over the conditional distribution  given in \eqref{eq:ballsinboxes}, we regain exchangeability. This allows for an intrinsic description of the coalescent process (which will still be time-inhomogeneous), which will be the theme of the following section.

\subsection{Transitions of $\Pi^{N,n}$ conditional on offspring numbers}
\label{subsec:transitionsconditionalonoffspringnumbers}
Let $\xi \in \cS_n$ and assume that 
\begin{equation} \label{PiNnstart}
\Pi^{N,n}_{g} = \xi = \big \{ \{ C_1,C_2 \}, \ldots, 
\{ C_{2x-1}, C_{2x} \}, C_{2x+1}, \ldots, C_b \big \}
\end{equation}
for some $g \in \N_0$.
Recall that the conditional distribution of $\cG^{(N)}_g$ given the offspring numbers is exchangeable with respect to how the offspring are labelled, see \eqref{eq:ballsinboxes}. 
Consequently, the conditional distribution of 
$\Pi^{N,n}_{g+1}$
given $\Pi^{N,n}_g$ and the offspring numbers 
$V^{(g+1)}$
does not depend on the precise positions 
of the ancestral lineages.
Of course, we must respect the grouping of genes into individuals, that is, whenever 
$\{A,B\} \in \xi$ for $A \neq B$, the associated lineages must reside within the same individual. 
We may therefore assume without loss of generality that $X_1 (g) = (0,1), X_2 (g) = (1,1),\ldots,
 X_{2x-1} (g) = (0,x), X_{2x} (g) = (1,x), X_{2x+1} (g) = (0,x+1),
 X_{2x+2} (g) = (0,x+2), \ldots, X_b (g) = (0,b)$.

We fix a realisation of the array $V = V^{(g+1)} = \big ( V^{(g+1)}_{i,j}    \big )_{1 \leqslant i,j \leqslant N}$ and draw a time-slice $\cG_g^{(N)}$ of the pedigree like Figure \ref{fig1a}, consisting of random maps
$P^{(N)}_{0,\cdot,g}$
and
$P^{(N)}_{1,\cdot,g}$ from $[N]$ to itself which are distributed as in \eqref{eq:ballsinboxes}, along with 
coin flips $M_{c,j,g}$. Then, we construct 
$X_1(g+1),\ldots,X_b(g+1)$ and $\Pi^{N,n}_{g+1}$ according to Subsection~\ref{subsec:ncoalescent} and define for all 
$\eta \in \cS_n$
\begin{equation} \label{eq:transitionsconditionedonnumbers}
\pi^{N,n}(V;\xi,\eta) \defeq
\P^{(N)} \big ( \Pi^{N,n}_{g+1}= \eta \, \big | \, V, \Pi^{N,n}_g = \xi             \big ).
\end{equation}
As this depends only on the realisation $V$ of the offspring matrix but not directly on $g$, we may further assume that $g=0$.

After averaging over the entire pedigree including the offspring numbers, $\Pi^{N,n}$ becomes a discrete-time Markov chain on $\cS_n$ with transition matrix
\begin{equation}\label{eq:transitionsconditionedonnumbersaveraged}
\pi^{N,n} (\xi,\eta) \defeq \E^{(N)} \big [  \P^{(N)} \big ( \Pi^{N,n}_1 = \eta \, \big | \, V, \Pi^{N,n}_0 = \xi          \big )          \big ] =
\E^{(N)}[\pi^{N,n}(V;\xi,\eta)],
\end{equation} 
where the expectation $\E$ is taken with regard to
the law of $V$; this is the setting that was considered 
in~\cite{BirknerEtAl2018}. 

We now give a more explicit description 
of $\pi^{N,n} (V;\xi,\cdot)$
when $\xi$ is completely dispersed, i.e. $\xi = \mathsf{cd} (\xi)$. In this case the ancestral genes are all located in distinct individuals. Hence, $x = 0$ in 
\eqref{PiNnstart} and $\xi = \{C_1,\ldots,C_b\}$. By symmetry, we may again assume without loss of generality that the genealogies associated with the blocks satisfy
$
X_j (0) = (j,0) 
$
for all $j=1,\ldots,b$. In other words, each of these genes is inherited from  its $0$-parent (recall the paragraph before Definition \ref{Def:M}). We stress that this is purely done for the sake of convenience; by adapting the definition of $\widehat V_i$ appropriately, we could also deal with the situation that some genes come from the $0$-parent and others from the 
$1$-parent. 

Now note that the $i$-th and $j$-th block will coalesce if and only if individuals $i$ and $j$ have the same
$0$-parent \emph{and pick the same gene} within it. Formally, they coalesce if and only if
\begin{equation*}
P^{(N)}_{0,i,0} = P^{(N)}_{0,j,0}
\quad \textnormal{and} \quad
M_{0,i,0} = M_{0,j,0}.
\end{equation*}
We describe the children's choice of $0$-parents by mimicking the ``balls-in-boxes'' construction of parent-child relations from the previous subsection. 
For each $i \in [N]$, we recall that (see Subsection~\ref{subsec:offspringnumbers})
\begin{equation*}
\widehat V_i = \# \big \{ j \in [N] : 
P^{(N)}_{0,j,0}  = i \big \}
\end{equation*}
is the number of children that chose $i$ as their $0$-parent. Clearly, $\sum_{i=1}^N \widehat V_i=N$.  Because parental roles are assigned by independent fair coin flips,
for each 
$i \in [N]$, $\widehat V_i$ has conditional distribution
$\textnormal{Binomial}(V_i,1/2)$ given $V_i$ (see also the derivation of the annealed pair coalescence probability
$c_N^{}$ in \eqref{eq:cN1}). Recall also that 
$V_i = \sum_{j=1}^N V_{i,j}^{(0)}$ is the total number of children of individual $i$. 
Thus, for any $i \in [N]$ and $j \in [n]$,
\begin{equation} \label{eq:choiceofparentV}
\P^{(N)}  \Big (  P^{(N)}_{0,j,0} = i \, \Big | \, 
V
\Big )
=
\E^{(N)} \Big [ \frac{\widehat V_i}{N}                \, \Big | \,  V \Big  ] =\frac{V_i}{2N}
\end{equation}
and, more generally for any $i_1^{},\ldots,i_n^{} \in [N]$,
\begin{align*}
\P^{(N)}  \Big (  P^{(N)}_{0,1,0} = i_1^{}, \ldots,
P^{(N)}_{0,n,0} = i_n^{}
\, \Big | \, 
V
\Big )
 & =
 \frac{1}{(N)_{n\downarrow}} \E^{(N)} \Big [  \prod_{i=1}^N \big ( \widehat V_i   \big )_{b_i^{} \downarrow}              \, \Big | \,  V \Big  ]\\
& = \frac{1}{(N)_{n\downarrow}} \E^{(N)} \Big [ \prod_{\ell=1}^r \big ( \widehat V_{j_{\ell}}   \big )_{k_{\ell}\downarrow}              \, \Big | \,  V \Big  ],
\end{align*}
where $b_i^{} \defeq \# \{ k \in [n] : i_k = i            \}$
{\color{blue}}
is the number of individuals in the $n$-sample whose $0$-parent is $i$.  In the last equality, we suppose 
$\{i_1^{},\ldots,i_n^{} \}$ is composed of  $r$  distinct elements $\{j_1,\ldots, j_r\}$, and that  $k_{\ell}$ many of $\{i_1^{},\ldots,i_n^{} \}$ are equal to $j_\ell$, for $\ell=1,2,\ldots, r$.

From this, we can read off the (conditional) transition probabilities of $\Pi^{N,n}$.  Suppose $\xi \in \cE_n$ consists of $b$ classes for the rest of this section. 

Assume first that $\eta \in \cE_n$ arises from $\xi$ by merging $r$ groups of classes of sizes $k_1^{},\ldots,k_r^{} \geqslant 1$, we obtain
the more explicit formula
\begin{equation} \label{eq:transprobV}
\pi^{N,n}(V;\xi,\eta) = \frac{2^{r - k_1^{} - \ldots - k_r^{}}}{(N)_{b \downarrow}} 
\sum_{\substack{ i_1,\ldots,i_r \in [N]   \\ \textnormal{pairwise distinct}}}
\E^{(N)} \Big [  \prod_{\ell=1}^r \big ( \widehat V_{i_\ell}^{}   \big )_{k_\ell^{} \downarrow}              \, \Big | \,  V \Big  ].
\end{equation}
The factor $2^{r - k_1^{} - \ldots - k_r^{}}$ in \eqref{eq:transprobV} comes from the fact that for each $\ell\in\{1,2,\ldots,r\}$, if the $k_\ell$ samples have the same $0$-parent $i_\ell^{}$, they pick the same gene copy of 
$i_\ell^{}$ with probability $2^{1-k_\ell}$.

We need to take some care when $\eta \in \cS_n \setminus \cE_n$. 
In that case, the $i_1^{},\ldots,i_r^{}$ in the sum above need not be pairwise distinct; in fact, when two groups containing respectively $k_p^{}$ and $k_q^{}$ classes in $\xi$ coalesce in two different genes of the same parent in $\eta$, we must have $i_p^{} = i_q^{}$. 

Due to the separation of time scales (ancestral genes are dispersed across different individuals much faster than they coalesce; see Subsection~\ref{subsec:timescalesep}), we will not be particularly concerned with transitions from $\xi$ to a particular $\eta \in \cS_n$. Rather, we need to know the total transition rate from $\xi$ to \emph{some} element of $\cS_n$ that has a certain complete dispersal. That is, we fix $\eta \in \cE_n$ and consider the sum
\begin{equation}\label{eq:transitionsconditionedonnumbersaggregated}
\widetilde \pi^{N,n} (V;\xi,\eta) \defeq
\sum_{\eta' \in \mathsf{cd}^{-1}(\eta)} \pi^{N,n} (V;\xi,\eta').
\end{equation}
Instead of summing over distinct individuals as in 
\eqref{eq:transprobV}, we will sum over distinct \emph{genes} in 
$\{0,1\} \times [N]$.
Clearly (cf. \eqref{eq:choiceofparentV}),
the gene associated with any given class of $\xi$ picks a particular gene
$(c,i) \in \{0,1 \} \times [N]$
as its ancestor with probability
$\E^{(N)} [  \widehat V_i / N \, | \, V] / 2$. Thus, 
\begin{equation} \label{eq:aggregatedtransprobs}
\widetilde \pi^{N,n} (V;\xi,\eta)
=
\frac{2^{-k_1^{} - \ldots - k_r^{}}}{(N)_{b \downarrow}} 
\sum_{\substack{ (c_1^{},i_1^{}),\ldots,(c_r^{},i_r^{}) \in \{0,1\} \times [N] \\ \textnormal{pairwise distinct}}}
\E^{(N)} \Big [  \prod_{\ell=1}^r \big (  \widehat V_{i_\ell}^{}   \big )_{\widetilde k_\ell^{} \downarrow}              \, \Big | \,  V \Big  ].
\end{equation}
The $\widetilde k_\ell^{}$ are chosen as follows. Whenever 
$\ell = \min \{ s \in [r] : i_s^{} = i_\ell^{}    \}$ (i.e. $i_{\ell}$ is different from $i_s$ for $s<\ell$), we set 
\begin{equation*} 
\widetilde k_\ell^{} 
\defeq 
\sum_{\substack{s \in [r] \\ i_s^{} = i_\ell^{} }  } k_s^{}
\end{equation*}
and otherwise, we let 
$\widetilde k_\ell^{} \defeq 0$. 
Recall that $(x)_{0\downarrow} = 1$ for all $x \in \N$.
Note that this sum always contains precisely one or two terms, depending on whether both $(0,i_\ell^{})$ and
$(1,i_\ell^{})$ are picked during the summation in \eqref{eq:aggregatedtransprobs}, or if either is absent.

For example, suppose $r=3$ and 
$i_1^{} = i_2^{} \neq i_3^{}$.
Suppose
the $k_1+k_2+k_3$ sampled genes coalesced into $r=3$ different genes in $0$-parents $i_1, i_2,i_3$, and  $k_{s}$ sampled genes have $0$-parent $i_s$ for $s=1,2,3$. Then $k_1$ and $k_2$ sampled genes are inherited respectively from the 2 genes of the same $0$-parent $i_1=i_2$, so that $\widetilde{k}_1=k_1+k_2$, $\widetilde{k}_2=0$ and  $\widetilde{k}_3=k_3$.


\subsection{Coarse graining the pedigree in the presence of large progeny}
\label{subsec:largeoffspring}
We will now see that the situation simplifies significantly in the presence of individuals with large individual progeny.
Recall our assumption \eqref{eq:limcN=0} that for the annealed pair-coalescence probability $c_N^{}$, we have $c_N^{} \to 0$ as $N \to \infty$. In particular, this means that, for any $\varepsilon > 0$, $\varepsilon$-GLIPs (which imply a macroscopic chance to observe a coalescence event, see Subsection~\ref{subsec:limitingcoalescent}), will typically be separated by an order of $1 / c_N^{} \to \infty$ generations. Recall that generations in the pedigree are constructed in an i.i.d.\ fashion and each parental gene is (due to Mendel's law of random segregation) picked as the ancestor with probability $1/2$. We expect this to lead to a sort of `mixing' of the ancestral lines on the pedigree, motivating us to make the following leap of faith.

We imagine that whenever we encounter a ($\varepsilon$-)GLIP (in generation $g+1$, say), the ancestral genes of the samples in the previous generation $g$ are approximately \emph{uniformly dispersed}
in the following sense. Whenever $\Pi^{N,n}_g$ consists of, say, $k$ blocks, the positions 
$X_1^{} (g), \ldots, X_k^{} (g)$ 
of the corresponding random walkers from Section~\ref{sec:pedigree} are approximately distributed like
\begin{equation*}
\big ( (C_1,I_1), \ldots, (C_k,I_k)                    \big ),
\end{equation*}
where $I_1,\ldots,I_k$ are sampled uniformly without replacement from $[N]$ and 
$C_1, \ldots, C_k$ are independent $\textnormal{Ber}(1/2)$ distributed and independent of 
$(I_j)_{1 \leqslant j \leqslant k}$. Both 
$I_1, \ldots, I_k$ and $C_1, \ldots, C_k$ are independent of the pedigree $\cG^{(N)}$. The underlying idea is that $\varepsilon$-GLIPs become increasingly rare for large $N$, so that the ancestral lineages have enough time to 
`forget' their initial positions.

In particular, we expect for any
$i \in [N]$ and $c \in \{0,1\}$
the following approximation to hold in a
$\varepsilon$-GLIP
for any $j \in [k]$ and large $N$
\begin{equation*}
\P^{(N)} \Big ( X_j (g+1) = (c,i) \, \Big | \,  \cG^{(N)} \Big )
\approx
\P^{(N)} \Big ( 
P^{(N)}_{C_j,I_j,g} = i, \, M_{C_j,I_j,g}^{} = c
\, \Big | \, \cG^{(N)}                \Big ).
\end{equation*}
We stress that this approximation is merely a heuristic and we presently do not know how to make it rigorous. 
Instead, our strategy will be to replace this approximation by introducing an auxiliary coalescent process that shows the behaviour indicated by the right-hand side, and show that its conditional finite-dimensional distributions are close to that of the original process (see the roadmap in Subsection~\ref{subsec:roadmap}).

For the time being, we take this step as granted and proceed to evaluate the right-hand side in a rigorous fashion.
Since $(C_j,I_j)$ is independent of $\cG^{(N)}$, it follows that $P^{(N)}_{C_j,I_j,g}$ and $M_{C_j,I_j,g}$
are conditionally independent of $\cG^{(N)}$ given the offspring numbers
$V^{(g+1)}$. We can therefore take the average with respect to the conditional law of $P^{(N)}_{C_j,I_j,g}$ given 
$V^{(g+1)}$; see \eqref{eq:ballsinboxes}. Indeed, recall from the beginning of this section how, conditional on $V^{(g+1)}$,
$\cG^{(N)}$ was constructed by sorting balls into boxes; now pick one of them uniformly, corresponding to the gene
$(I_j,C_j)$. Mathematically, this is equivalent to redrawing the position of this ball within the previously described array of boxes. Formally, this means that
\begin{equation*}
\begin{split}
&\P^{(N)} \Big (
P^{(N)}_{C_j,I_j,g} = i, \, M_{C_j,I_j,g}^{} = c
\, \Big | \, \cG^{(N)}                \Big ) \\
& \quad = 
\P^{(N)} \Big ( P^{(N)}_{C_j,I_j,g} = i, \, M_{C_j,I_j,g}^{} = c
\, \Big | \, 
V^{(g+1)} \Big ) \\
& \quad =
\frac{1}{2}  \P^{(N)} \Big ( P^{(N)}_{C_j,I_j,g} = i \, \Big | \, V^{(g+1)}     \Big ) \\
& \quad =
\frac{1}{4} \sum_{i' = 1}^N \frac{V^{(g+1)}_{i,i'}}{N} \\
& \quad =
\frac{V_i^{(g+1)}}{4N}.
\end{split}
\end{equation*}
The second equality holds because the conditional probability of $(c,i)$ being picked as the ancestor given that individual $i$ is the $C_j$-parent is $1/2$. The third  equality is due to the fact that, conditionally on $I_j$ being a child of the pair $\{i,i'\}$,  $i$ is in fact the 
$C_j$-parent
with probability $1/2$.

More generally, for 
$i_1^{},\ldots, i_k^{} \in [N]$ and $c_1^{}, \ldots, c_k^{} \in \{0,1\}$, we expect, for large $N$, that
\begin{equation} \label{eq:largeoffspringapprox}
\begin{split}
&\P^{(N)} \Big ( X_1 (g+1) = (c_1^{}, i_1^{}), \ldots, X_k(g+1) = (c_k^{}, i_k^{} ) \, \Big | \,  \cG^{(N)} \Big ) \\
& \quad \approx
\P^{(N)} \Big ( 
P^{(N)}_{C_1,I_1,g} = i_1^{}, M_{C_1,I_1,g}^{} = c_1^{}  , \ldots, 
P^{(N)}_{C_k,I_k,g} = i_k^{}, M_{C_k,I_k,g}^{} = c_k^{}  
\, \Big | \, V^{(g+1)}     \Big ),
\end{split}
\end{equation}
where
\begin{equation} \label{eq:generalapproximation}
\begin{split}
&\P^{(N)} \Big ( 
P^{(N)}_{C_1,I_1,g} = i_1^{}, M_{C_1,I_1,g}^{} = c_1^{}  , \ldots, 
P^{(N)}_{C_k,I_k,g} = i_k^{}, M_{C_k,I_k,g}^{} = c_k^{}  
\, \Big | \, V^{(g+1)}     \Big ) \\
& \quad = \prod_{\ell = 1}^k \frac{V_{i_\ell^{}}^{(g+1)}}{4N} + O(N^{-1}).
\end{split}
\end{equation}
Note that we incur an error of order $1/N$ by sampling parents with replacement.

For $g\in\N$ we let 
\begin{equation} \label{eq:discretepaintbox}
\widetilde{V}^{(g)} = \bigg( \frac{V^{(g)}_{(1)}}{4N}, \frac{V^{(g)}_{(1)}}{4N}, \frac{V^{(g)}_{(2)}}{4N}, \frac{V^{(g)}_{(2)}}{4N},
  \frac{V^{(g)}_{(3)}}{4N}, \frac{V^{(g)}_{(3)}}{4N}, \dots \bigg) \; \in \Delta \setminus \{ \bf 0   \}
\end{equation}
be the image under $\varphi$
(see \eqref{eq:varphidef})
of the order statistics of the total offspring sizes 
$\big (V^{(g)}_i\big )_{i}$.

Associating with each $X_j$ a block in the partition $\Pi_g^{N,n}$  and merging all blocks that meet in the previous generation $g+1$,  \eqref{eq:generalapproximation} translates
directly into the following statement on the `aggregate' conditional transition probabilities $\widetilde \pi^{N,n} (V;\xi,\eta)$ introduced in \eqref{eq:transitionsconditionedonnumbersaggregated}.
\begin{lemma}\label{lem:largerep}
For any $\xi,\eta \in \cE_n$, recall that $\widetilde \pi^{N,n}(V;\xi,\eta)$ is the probability of observing a transition from $\xi$ to $\mathsf{cd}^{-1} (\eta)$ for the process $\Pi^{N,n}$, conditional on the pairwise offspring numbers $V = \big ( V_{i,j}  \big )_{1 \leqslant i,j \leqslant N}$ but averaged w.r.t.\ the conditional (on $(V^{(g)})_{g \in \N}$) law of the pedigree given in \eqref{eq:ballsinboxes}. Then,
\begin{equation*}
\widetilde \pi^{N,n} (V;\xi,\eta) = p(\widetilde V; \xi, \eta) + O(1/N), 
\end{equation*}
where $\widetilde V\in \Delta$ is defined as in \eqref{eq:discretepaintbox} for the total offspring sizes 
$( V_i )_{1 \leqslant i\leqslant N}$, and  $p$ is given by \eqref{eq:paintboxtransitionprime}.
\end{lemma}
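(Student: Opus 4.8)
The plan is to start from the exact expression \eqref{eq:aggregatedtransprobs} for $\widetilde\pi^{N,n}(V;\xi,\eta)$ and to compare it term by term with $p(\widetilde V;\xi,\eta)$. Write $|\xi|=b$ and assume $\eta$ is obtained from $\xi$ by grouping its blocks into $r:=|\eta|$ groups of sizes $k_1,\dots,k_r\ge 1$, so that $k_1+\dots+k_r=b$. The first observation is that $\widetilde V$ has full mass: $|\widetilde V|_1=\tfrac1{2N}\sum_{i=1}^N V_i=1$. Consequently, in \eqref{eq:paintboxtransitionprime} the factor $(1-|y|_1)^{s-\ell}$ kills every summand except $\ell=s$, and reindexing the buckets of \eqref{eq:paintboxtransitionprime} by genes gives
\[
p(\widetilde V;\xi,\eta)=\sum_{\substack{(c_1,i_1),\dots,(c_r,i_r)\\ \text{pairwise distinct genes}}}\ \prod_{\ell=1}^r\Big(\frac{V_{i_\ell}}{4N}\Big)^{k_\ell}.
\]
Comparing with \eqref{eq:aggregatedtransprobs}, it then suffices to show that, uniformly in $V$ and in the pairwise distinct genes $(c_1,i_1),\dots,(c_r,i_r)$,
\[
\frac{2^{-b}}{(N)_{b\downarrow}}\,\E^{(N)}\Big[\textstyle\prod_{\ell=1}^r(\widehat V_{i_\ell})_{\widetilde k_\ell\downarrow}\,\Big|\,V\Big]=\prod_{\ell=1}^r\Big(\frac{V_{i_\ell}}{4N}\Big)^{k_\ell}+(\text{error}),
\]
with the errors summing to $O(1/N)$.

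Second, I would compute the conditional moment exactly. Given $V$, realize the $0$-parent assignment by a fixed multigraph $G$ on $[N]$ with $V_{i,j}$ edges between $i$ and $j$ (so $G$ has $N$ edges and $i$ has degree $V_i$), equipped with independent fair coins orienting each edge towards one endpoint; then $\widehat V_i$ is the in-degree of $i$. Expanding each falling factorial $(\widehat V_{i_\ell})_{\widetilde k_\ell\downarrow}$ as a sum over ordered tuples of distinct edges incident to $i_\ell$ and oriented towards it, and taking expectation over the coins, the contribution of an edge configuration vanishes unless no single edge is shared by two different groups (a shared edge would have to point at two different vertices), in which case the coin expectation is exactly $2^{-b}$. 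Hence $\E^{(N)}[\prod_\ell(\widehat V_{i_\ell})_{\widetilde k_\ell\downarrow}\mid V]$ equals $2^{-b}$ times the number of such collision‑free configurations, which equals $2^{-b}\prod_\ell(V_{i_\ell})_{\widetilde k_\ell\downarrow}$ minus inclusion–exclusion corrections, each carrying a factor $V_{i,i'}$ for some shared pair $\{i,i'\}$. Replacing the falling factorials $(V_{i_\ell})_{\widetilde k_\ell\downarrow}$ by the plain powers $V_{i_\ell}^{\widetilde k_\ell}$ and $(N)_{b\downarrow}$ by $N^b$ turns the main term into exactly $\prod_\ell(V_{i_\ell}/(4N))^{k_\ell}$.

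Third — and this is the delicate step — one must bound the resulting errors uniformly in $V$ after summing over all gene configurations. Replacing $(N)_{b\downarrow}$ by $N^b$ costs only a relative $O(1/N)$, harmless since $p(\widetilde V;\xi,\eta)\le 1$. Replacing $(V_i)_{m\downarrow}$ by $V_i^m$ costs a relative $O(1/V_i)$, which is \emph{not} uniformly small, and the inclusion–exclusion corrections are similarly awkward; the remedy is to reorganize each error sum before estimating. A factor $V_i^{-1}(V_i/(4N))^m$ with $m\ge 2$ is at most $(4N)^{-1}(V_i/(4N))^{m-1}\le (4N)^{-1}(V_i/(4N))$, using the crucial bound $V_i\le N$ (each of the $N$ children has exactly two distinct parents), so summing over that gene index gives $(4N)^{-1}\sum_i V_i/(4N)=(8N)^{-1}$; likewise a correction factor $V_{i,i'}/(V_iV_{i'})\cdot(V_i/(4N))^{m}(V_{i'}/(4N))^{m'}$ with $m,m'\ge 1$ is at most $V_{i,i'}/(4N)^2$, and $\sum_{i,i'}$ of it is $\le (4N)^{-2}\cdot 2\sum_{i<j}V_{i,j}=(8N)^{-1}$ by \eqref{eq:fixedN}; the remaining gene factors are bounded by a paintbox probability $\le 1$. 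In every case the leftover is $O(1/N)$ with constant depending only on $n$.

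This yields $\widetilde\pi^{N,n}(V;\xi,\eta)=p(\widetilde V;\xi,\eta)+O(1/N)$ uniformly in $\xi,\eta\in\cE_n$ and in $V$, for each fixed $n$. I expect the last step to be the main obstacle: the naive error is only of order $(\max_i V_i)/N$ or worse, and the saving comes entirely from summing the error over configurations \emph{first} and then exploiting the normalizations $\sum_i V_i=2N$, $\sum_{i<j}V_{i,j}=N$ together with $V_i\le N$ — this is the source of the ``rather cumbersome computations'' deferred to Appendix~\ref{app:lemmaproof}.
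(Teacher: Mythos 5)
Your proof is correct and follows the same route as the paper's: both start from the exact aggregated transition probability \eqref{eq:aggregatedtransprobs}, observe that $|\widetilde V|_1 = 1$ reduces \eqref{eq:paintboxtransitionprime} to the $\ell = s$ term, and then match the two expressions gene-configuration by gene-configuration. The paper's own proof is essentially a two-line reference to \eqref{eq:generalapproximation} (``we incur an error of order $1/N$ by sampling parents with replacement'') and does not spell out the uniform-in-$V$ control of the error after summing over the $O(N^r)$ configurations, which you correctly single out as the delicate step; your strategy of summing the error terms first and only then invoking $\sum_i V_i = 2N$, $\sum_{i<j} V_{i,j} = N$ and $V_i \leqslant N$ supplies exactly the justification the paper leaves implicit.
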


\begin{proof}
This follows immediately from \eqref{eq:generalapproximation} by summing over all
indices $(c_1^{},i_1^{}),\ldots,(c_k^{},i_k^{})$ that are consistent with the grouping of blocks of $\xi$ into classes which form the blocks of $\eta$; see also the derivation of 
\eqref{eq:aggregatedtransprobs}. Note that in \eqref{eq:paintboxtransitionprime}, we only have the term with $\ell = s$ to consider because $| \widetilde V |_1^{} \equiv 1$.
\end{proof}


We have just explained how
we expect transitions of the coalescent that are induced by a GLIP to be governed only by the total offspring numbers $V_i$. This allows us to consider a `coarse-grained' version of the pedigree.
Believing this, Lemma~\ref{lem:largerep} tells us that the transition can be (up to a negligible error) described by 
a $\widetilde V$-paintbox merger (see \eqref{eq:discretepaintbox}).


From this point of view, at least as far as GLIPs are concerned, the relevant information regarding the pedigree in the prelimit reduces to a sequence of paint box mergers, similar to our putative limit process (see Definition~\ref{def:inhomcoalescentpreliminary}). For all $N \in \N$, we define
\begin{equation} \label{eq:PsiNdef}
\Psi^{(N)} := \sum_{g=1}^\infty
\delta_{\big ( g c_N^{}, \widetilde V^{(g)}  \big ) }
\end{equation}
which is a point process on
$[0,\infty) \times \big ( \Delta \setminus \{ {\bf 0}   \}      \big )$.


Just like for $\Psi$, we interpret an atom $(t,x)$ of $\Psi^{(N)}$ as an $x$-paintbox-merger that takes place at time $t$. We now show that $\Psi^{(N)}$ converges
in distribution to $\Psi$, with respect to the vague topology, as $N\to\infty$.

For any $\varepsilon > 0$ and
any $A \subseteq \Delta \setminus 
B_\varepsilon^{} ({\bf 0})$, we have for any finite time interval $[S,T] \subseteq (0,\infty)$ that
\begin{equation*}
\Psi^{(N)} \big ( [S,T] \times A    \big )
\sim
\textnormal{Bernoulli} \big (
\# [S,T] \cap c_N^{-1} \N, \cL \big ( \widetilde V  \big ) (A)
\big ).
\end{equation*}
By Remark~\ref{Rk:Xi'vsXi},
$c_N^{-1} \cL(\widetilde V) (\dd x)$ converges vaguely to 
$\langle x, x \rangle^{-1} \,  \Xi(\dd x)$ and
$\# [S,T] \cap c_N^{-1} \N$ is asymptotically equivalent to
$c_N^{-1} (T - S)$, whence (as long as $\Xi (\partial A) = 0$),
\begin{equation*}
\begin{split}
&\textnormal{Bernoulli} \big (
\# [S,T] \cap c_N^{-1} \N,\, \cL \big ( \widetilde V  \big ) (A) \big )
=
\textnormal{Bernoulli} \big (
\# [S,T] \cap c_N^{-1} \N,\, c_N^{} c_N^{-1} \cL \big ( \widetilde V  \big ) (A) 
\big )\\
& \quad \stackrel{N \to \infty}{\longrightarrow}
\textnormal{Poi} \Big  ((T-S) \int_{A} 
\langle x, x \rangle^{-1} \Xi (\dd x)
\Big ).
\end{split}
\end{equation*}
We conclude that as $N \to \infty$, $\Psi^{(N)}$ converges
in distribution, with respect to the vague topology
to 
a Poisson point process with intensity measure
$\dd t \, \langle x, x \rangle^{-1}  \Xi(\dd x) $, i.e.\ $\Psi$. 

Naively, one would like to believe that this immediately yields the convergence of the associated coalescent processes. 
However, boiling the behaviour of the prelimiting coalescents down in this way only works for GLIPs, because only for GLIPs we may ignore the error term in Lemma~\ref{lem:largerep}. In addition, 
recall (see \eqref{eq:cpair} and the surrounding discussion) that because of the accumulation of mass at 
${\bf 0}$,
the vague convergence
$
c_N^{-1} \cL(\widetilde V)(\dd x) 
\to
\langle x, x \rangle^{-1} \Xi (\dd x)
$
does not directly explain the appearance of pair mergers due to small families, which is why we had to manually add them back in via adding an atom at 
${\bf 0}$ to $\Xi$. We do something similar on the point process level. 

In Subsection \ref{subsec:roadmap} below, we outline the proof strategy that we will pursue in Sections ~\ref{sec:inhomcoal} and ~\ref{sec:paircouplings}.

\subsection{Roadmap}
\label{subsec:roadmap}

\begin{enumerate}
\item 
Fix $\varepsilon > 0$ and ignore (at first) generations with relatively small offspring numbers, i.e. generations $g$ with
$\| \widetilde V^{(g)} \| < \varepsilon$. For this, we define the $\varepsilon$-cutoff $\Psi^{(N)}_{\varepsilon}$ of $\Psi^{(N)}$ via
\begin{equation} \label{eq:psicutoff}
\Psi^{(N)}_{\varepsilon} 
\defeq
\sum_{g=1}^\infty 
\1_{\| \widetilde V^{(g)} \| \geqslant \varepsilon}   \delta_{\big ( g c_N^{}, \widetilde V^{(g)}  \big )}.
\end{equation}
\item 
We use $\Psi^{(N)}_{\varepsilon}$ to drive an 
\emph{$\varepsilon$-naive coalescent} (see Definition~\ref{def:coalescents} below); every atom
$(t,x)$ of $\Psi^{(N)}_{\varepsilon}$ induces an $x$-merger at time $t$. In anticipation of pair mergers which will appear at rate $c_{\mathsf{pair}}^{}$ in the limit $N \to \infty$, the paintbox mergers are complemented by additional pair mergers as follows: at each point $g c_N^{} \in c_N^{} \N$ in rescaled time for which 
$\| \widetilde V^{(g)} \| < \varepsilon$,
i.e. 
$\Psi_\varepsilon^{(N)} \big (
\{ g c_N^{} \} \times \Delta \setminus
\{ { \bf 0 } \} \big ) = 0$, 
we let each pair of blocks coalesce independently of the others with probability 
$c_{\mathsf{pair}}^{} c_N^{}$. 
\item
For fixed $\varepsilon > 0$, we show that, as $N\to\infty$, the $\varepsilon$-naive coalescent converges to the inhomogeneous $(\Psi_{\varepsilon },c_{\textnormal{pair}}^{})$-coalescent 
as in Definition~\ref{def:inhomcoalescentpreliminary} with $\Psi$ replaced by the cutoff $\Psi_{\varepsilon}^{}$
defined via
\begin{equation} \label{eq:psilimitcutoff}
\Psi_\varepsilon^{} (A)
\defeq
\Psi \big (A \setminus [0,\infty) \times 
B_\varepsilon^{} ({ \bf 0}) \big )
\end{equation}
for all $A \subseteq [0,\infty) \times \Delta$.
\item 
Letting $\varepsilon \to 0$, we see that the finite-dimensional distributions of the inhomogeneous $\Psi_{\varepsilon}$-coalescent converge to the those of the inhomogeneous $\Psi$-coalescent (which will also be defined rigorously in the next subsection).
\item 
Finally, in Section~\ref{sec:paircouplings}, we will show that the 
$\varepsilon$-naive coalescent is indeed a good approximation to 
$\Pi^{N,n}$ when $\varepsilon$ is small and $N$ is large. 
This will be achieved via controlling the $L^2$-distance between the conditional finite-dimensional distribution of
$\Pi^{N,n}$ and those of the $\varepsilon$-naive coalescent. To this end, we will analyse the following couplings.
\begin{itemize}
\item
Two conditionally independent realisations of $\Pi^{N,n}$,
\item 
a realisation of the $\varepsilon$-naive coalescent together with a conditionally independent realisation of $\Pi^{N,n}$,
\item 
two conditionally independent realisations of the
$\varepsilon$-naive coalescent. 
\end{itemize}
In all of these couplings, `conditionally independent' refers to them being defined on the same pedigree, but using independent realisations of the Mendelian coins $M_{c,i,g}$ which were introduced in Definition~\ref{Def:M}. This will allow us to evaluate the expectations of products of conditional finite-dimensional distributions that arise when 
evaluating the aforementioned $L^2$-distance.
\end{enumerate}

\section{Inhomogeneous coalescents}
\label{sec:inhomcoal}

Recall that so far (see Definition~\ref{def:inhomcoalescentpreliminary}), we have defined the inhomogeneous $(\Psi,c_{\mathsf{pair}}^{})$-coalescent only under the additional assumption 
$\int_\Delta \langle x, x \rangle^{-1} \Xi (\dd x) < \infty$, which implies that the set of potential jump times is discrete. 
We now want to give the definition for the general case, where the potential jump times might not be discrete.

Observe that for any point configuration $\psi = \sum_{i \geqslant 1} \delta_{(t^{(i)}, \, x^{(i)})}$ on 
$(0,\infty) \times \big (\Delta \setminus \{ \textbf{0} \}      \big )$
with the property that
\begin{equation}
  \label{eq:l2cond}
  \sum_{i \, : \,0 <  t^{(i)} < u} \langle x^{(i)}, x^{(i)} \rangle < \infty \quad \text{for every } 0  < u < \infty
\end{equation} 
and 
any $\xi \in \mathcal{E}_n$,
\begin{align}
  \label{eq:noncoal.psi.s<t.0}
  \prod_{i \, : \, 0 < t^{(i)} < u} p(x^{(i)}; \xi,\xi)
\end{align}
is well-defined and strictly positive for $0  < u < \infty$, even if $\psi$ has atoms at a dense set of times. Indeed, from \eqref{eq:paintboxtransitionprime} it is immediate that for any $\xi \preccurlyeq \eta$ and $\eta \neq \xi$, we have 
$p(x;\xi,\eta) \leqslant 2^{\# \xi} \langle x,x\rangle$. To see this, recall the interpretation of
\eqref{eq:paintboxtransitionprime}
in terms of the paintbox construction in Subsection~\ref{subsec:limitingcoalescent}.
The probability that any given pair of blocks lands in the same bucket is 
$\langle x,x \rangle$, and the factor
$2^{\#\xi}$ comes from a union bound.

Thus 
$p(x;\xi,\xi) \geqslant 1 - K \langle x,x \rangle$ for some constant $K$ depending on $\xi$. 
This suggests that although infinitely many jumps may occur on arbitrarily small time intervals, most of them will be void so that the set of \emph{effective} jump times will be locally finite. Moreover, note that the realisations of $\Psi$ satisfy \eqref{eq:l2cond} almost surely. We let
\begin{equation*}
\cN \defeq \bigg \{
\sum_{i = 1}^k \delta_{(t^{(i)},x^{(i)})} 
:
k \in \N \cup \{\infty\}, \, t^{(i)} \in (0,\infty) 
\textnormal{ pairwise distinct}
, \, x^{(i)} \in \Delta \setminus \{ {\bf 0} \}
\textnormal{ for all } i \leqslant k
\bigg \}
\end{equation*}
be the set of simple point measures on 
$(0,\infty) \times \big ( \Delta \setminus \{ \bf 0 \}   \big )$ and write
\begin{equation} \label{Def:cN2}
\cN_{\mathsf{loc}}^2 \defeq
\bigg \{
\psi \in \cN : \sum_{i \,:\, t^{(i)} \leqslant u} 
\langle x^{(i)}, x^{(i)} \rangle < \infty 
\textnormal{ for all } 0 < u < \infty
\bigg \}
\end{equation}
for the set of those $\psi \in \cN$ that additionally satisfy \eqref{eq:l2cond}. We equip both 
$\cN$ and $\cN_{\mathsf{loc}}^2$ with the topology of weak convergence.
More precisely, we say that a sequence 
$(\psi_n^{})_n^{}$ in $\cN$ converges weakly to $\psi \in \cN$ if and only if 
$\psi_n^{}(f)$ converges to $\psi(f)$ for every bounded and continuous 
\mbox{$f : (0,\infty) \times \big ( 
\Delta \setminus \{ \bf 0\} 
\big ) \to \R$}.
Note that by taking $f$ to be the mollifications of indicators of the form
$\1_{(0,u) \times (\Delta \setminus \{ \bf 0 \})}$, we see that
$\cN_{\mathsf{loc}}^2$ is a closed subset of $\cN$.

Our strategy is as follows. We will define the 
inhomogeneous $(\psi,c)$-coalescent, first for a deterministic choice of $\psi \in \cN_{\mathsf{loc}}^2$ and $c > 0$, with $\psi$ prescribing the fixed paintbox events and $c > 0$ denoting the exponential rate at which the (still random) pairwise mergers occur.

In order to fit the (yet to be defined) $\varepsilon$-naive coalescent (in which pairs don't coalesce at exponential rates, but at discrete times, see Step 2 in the roadmap in Subsection~\ref{subsec:roadmap}) into this framework, we define more generally the $(\psi,\gamma)$-coalescent for a probability distribution $\gamma$ on $(0,\infty)$. We interpret $\gamma$ as the distribution of the waiting time between pair mergers; for the naive coalescent, this will be a geometric distribution, converging to an exponential distribution as $N \to \infty$. Then, Step 3 and Step 4 in the roadmap in Subsection~\ref{subsec:roadmap} will follow once we prove the continuity of the finite-dimensional distributions of the inhomogeneous
$(\psi,\gamma)$-coalescent  in the data $(\psi,\gamma)$.

\textbf{Inhomogeneous coalescent flow. }
To get around the issue of dense potential jump times, we first recall (see the definition of $p$ in \eqref{eq:paintboxtransitionprime} and the surrounding discussion) that for any $x$-merger and given that the coalescent is in state $\xi \in \cE_n$, the jump target $\eta \in \cE_n$ is determined by the indices of the blocks of $\xi$ that merge to form the blocks of $\eta$. These are in turn decided by uniformly throwing balls into paint buckets whose sizes are encoded by $x$. 

Note that this random experiment can be performed without knowledge of $\xi$; we may first draw a random partition $\alpha \in \cE_n$ with distribution
$p(x;\xi^{(n)}_0,\cdot)$, and then define 
the jump target $\eta$ by merging all blocks of $\xi$ whose indices are contained in the same block of $\alpha$. We determine the full evolution of the inhomogeneous coalescent  by drawing an independent 
$\alpha_{t,x}^{} \sim p(x;\xi_0^{(n)},\cdot)$ for each atom
$(t,x) \in \psi$. In order to use this to define a coalescent process, we need the notion of a 
\emph{coagulator}; given a fixed partition $\alpha$, the associated $\alpha$-coagulator turns any $\xi \in \cE_n$ into a coarser partition by merging, for each block $A$ of $\alpha$, those blocks of $\xi$ whose indices are contained in $A$.

\begin{defn} \label{def:coagulators}
For $\alpha \in \cE_n$, define the 
\emph{$\alpha$-coagulator}
$\mathsf{Coag}_\alpha : \cE_n \to \cE_n$ for
$\xi = \{C_1,\ldots,C_b  \}$
via 
\begin{equation*}
\mathsf{Coag}_\alpha (\xi) =
\Big (
\bigcup_{A \in \alpha} 
\Big \{
\bigcup_{i \in A \cap [b]} C_i
\Big \}
\Big ) \setminus \{ \varnothing \}.
\end{equation*}
\end{defn}
We interpret the family 
$(\alpha_{t,x}^{})_{(t,x) \in \psi}$
as an inhomogeneous point process $\cC_\psi$ on
$(0,\infty) \times \cE_n$, 
letting
\begin{equation} \label{eq:paintboxeff}
\cC_\psi \defeq 
\sum_{
\substack{
(t,x) \in \psi \\
\alpha_{t,x}^{} \neq \xi_0^{(n)}
}
}
\delta_{(t, \, \alpha_{t,x}^{})}.
\end{equation}

Similarly, we can determine the pair-coalescence times in advance. Let $\gamma$ be a probability distribution on $(0,\infty)$. Independently for each pair $(i,j)$ with 
$1 \leqslant i < j \leqslant n$, we let
$\cK_\gamma^{i,j} \subseteq (0,\infty)$ be a renewal process with waiting time distribution $\gamma$, i.e.
\begin{equation} \label{eq:pairmergereff}
\cK_\gamma^{i,j} \defeq \sum_{k = 1}^{\infty}
\delta_{\sum_{\ell = 1}^k T_\ell^{i,j}},
\end{equation}
where $T_1^{i,j},T_2^{i,j}$ are i.i.d. according to $\gamma$.
We then define a point process $\cK_\gamma$ on
$(0,\infty) \times \cE_n$ via
\begin{equation} \label{eq:allpairmergers}
\cK_\gamma \defeq \sum_{1\leqslant i < j \leqslant n}
\sum_{t \in \cK_\gamma^{i,j}} \delta_{(t,\xi_{i,j}^{})}
\end{equation}
where $\xi_{i,j}^{}$ is the partition of $[n]$ that is comprised exclusively of singletons and the block $\{i,j\}$.

With this, we can now define the general inhomogeneous $(\psi,\gamma)$-coalescent along with the relevant special cases. To avoid a proliferation of superscripts, we regard the sample size $n$ as fixed and suppress it in our notation. 
We observe that the point processes defined in 
\eqref{eq:paintboxeff} and \eqref{eq:pairmergereff} are almost surely locally finite.
For $\cC_\psi$, this is an immediate consequence of $\psi$ being in $\cN_{\mathsf{loc}}^2$. Indeed, we have for all $T > 0$ that
\begin{equation*}
\E \big [ \cC_\psi \big ( (0,T] \times \cE_n \big ) \big ]
=
\sum_{i \, : \, 0 < t^{(i)} \leqslant T} \Big( 1 - p \big (
x^{(i)};\xi_0^{(n)},\xi_0^{(n)}\big)\Big)
\end{equation*}
By the discussion surrounding 
\eqref{eq:noncoal.psi.s<t.0}, the right-hand side is bounded by
\begin{equation*}
K \sum_{i \, : \, 0 < t^{(i)} \leqslant T} 
\langle x^{(i)}, x^{(i)} \rangle,
\end{equation*}
which, in turn, is finite by virtue of $\psi \in \cN_{\mathsf{loc}}^2$.
By a Borel-Cantelli argument,
$\sum_{\ell=1}^\infty T_\ell^{i,j} = \infty$ almost surely, which implies that $\cK_\gamma^{i,j}$ is almost surely finite for each choice of $i$ and $j$.

\begin{defn}
\label{def:inhomcoalescentfullygeneral}
Given $\psi \in \cN_{\mathsf{loc}}^2$ and a probability distribution $\gamma$ on $(0,\infty)$, the \emph{inhomogeneous
$(\psi,\gamma)$-coalescent} is a stochastic (Markov) process 
$\Pi^{\psi,\gamma} = (\Pi^{\psi,\gamma}_t)_{t \geqslant 0}^{}$ taking values in $\cE_n$ and constructed as follows. Let
$\cC_\psi$ be as in \eqref{eq:paintboxeff} and 
$\cK_\gamma$
as in \eqref{eq:allpairmergers}. Given the initial state
$\Pi^{\psi,\gamma}_0 = \xi_0^{}$, we let, for all $t > 0$,
\begin{equation*}
\Pi_t^{\psi,\gamma}
\defeq
\bigg( \prod_{
(s,\alpha) \in \cC_\psi \cup \cK_\gamma \, : \,
s \leqslant t 
}
\mathsf{Coag}_\alpha \bigg) (\xi_0^{}),
\end{equation*}
where the product is ordered chronologically, i.e. for 
$M = \{ (s_1,\alpha_1), \ldots, (s_k,\alpha_k) \} $ 
with $s_1 < \ldots < s_k$, 
$\prod_{(s,\alpha) \in M} \mathsf{Coag}_\alpha \defeq
\mathsf{Coag}_{\alpha_k} \circ \ldots \circ 
\mathsf{Coag}_{\alpha_1}
$. Whenever $s_i^{} = s_j^{}$ for some $i$ and $j$, we remove all atoms except for one; if there are multiple atoms of $\cK_\gamma$ and no atom of $\cC_\psi$, we keep a uniformly chosen one and discard the others. Otherwise, we give priority to the atom of $\cC_\psi$. 
For $c \geqslant 0$, we commit a slight abuse of notation and refer to the
$\big ( \psi,\mathsf{Exp}(c) \big )$-coalescent as the $(\psi,c)$-coalescent and denote it as $\Pi^{\psi,c}$. 
\end{defn}

The Markov property of $\Pi^{\psi,\gamma}$ follows immediately from the independence of
$\cK \big ((0,t] \big )$ 
and
$\cK \big ( (t,\infty)      \big )$
for $\cK \in \{\cC_\psi, \cK_\gamma \}$. Moreover, its construction immediately implies that
$\Pi^{\psi,\gamma}$ has càdlàg sample paths. 

\begin{remark}
In addition to defining the inhomogeneous $(\psi,\gamma)$-coalescent as a stochastic process for general $\psi$ 
and $\gamma$, 
Definition~\ref{def:inhomcoalescentfullygeneral} does slightly more. Namely, letting 
\begin{equation*}
F^{\psi,\gamma}_{s,t} (\xi) \defeq 
\prod_{
\substack{
(u, \alpha) \in \cC_\psi \cup \cK_c \\
s < u \leqslant t
}
}
\mathsf{Coag}_\alpha (\xi),
\end{equation*}
for 
$s,t$ with $0 \leqslant s \leqslant t < \infty$ and $\xi \in \cE_n$,
we obtain a \emph{stochastic flow} for the inhomogeneous
$(\psi,\gamma)$-coalescent in the following sense. The random maps
$F^{\psi,\gamma}_{s,t} : \cE_n \to \cE_n$ almost surely satisfy 
$F_{s,t}^{\psi,\gamma} =
F_{u,t}^{\psi,\gamma} \circ F_{s,u}^{\psi,\gamma}$ for all $s,u$ and $t$ simultaneously. Moreover,
$F^{\psi,\gamma}_{s,s} = \mathsf{id}_{\cE_n}$ almost surely and
$F^{\psi,\gamma}_{0,\cdot} = \Pi^{\psi,\gamma}$.
\hfill \mbox{$\Diamond$}
\end{remark}

We now show that the finite-dimensional marginals of
$\Pi^{\psi,\gamma}$ are continuous in the data $\psi$ and $\gamma$.

\begin{lemma}
\label{lem:inhomcoalescentfddcontinuity}
Let $0 < t_1^{} < \ldots < t_k^{} < \infty$ and 
$\xi_0^{},\xi_1^{},\ldots,\xi_k^{} \in \cE_n$. 
For all $m \in \N$, let $\gamma_m^{}$ be a probability measure on $(0,\infty)$ and 
$\psi_m^{} \in \cN_{\mathsf{loc}}^2$ such that $\psi_m^{}$ and
$\gamma_m^{}$ converge weakly to $\psi$ and $\gamma$ respectively as $m\to\infty$. Assume moreover that
$\psi \big (\{t_j^{} \} \times \Delta \big ) = 0$ for all 
$1 \leqslant j \leqslant k$ and that
$\gamma$ has no pure point part, i.e.
$\gamma(\{ t \}) = 0$ for all $t > 0$. 
Then,
\begin{equation*}
\P\big (\Pi_{t_1^{}}^{\psi_m^{},\gamma_m^{}} = \xi_1^{},
\ldots,
\Pi_{t_k^{}}^{\psi_m^{},\gamma_m^{}} = \xi_k^{}
\, | \,
\Pi_0^{\psi_m^{},\gamma_m^{}} = \xi_0^{}
\big ) 
\stackrel{m \to \infty}{\longrightarrow}
\P\big (\Pi_{t_1^{}}^{\psi,\gamma} = \xi_1^{},
\ldots,
\Pi_{t_k^{}}^{\psi,\gamma} = \xi_k^{}
\, | \,
\Pi_0^{\psi,\gamma} = \xi_0^{}
\big ) 
\end{equation*}
in distribution.
\end{lemma}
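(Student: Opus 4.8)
\emph{Strategy.}
The plan is a continuous‑mapping argument at the level of the driving point processes. By Definition~\ref{def:inhomcoalescentfullygeneral}, $\bigl(\Pi^{\psi,\gamma}_{t_1},\dots,\Pi^{\psi,\gamma}_{t_k}\bigr)=\Phi\bigl(\cC_\psi\cup\cK_\gamma\bigr)$, where $\cC_\psi,\cK_\gamma$ are the independent, almost surely locally finite point processes of \eqref{eq:paintboxeff}--\eqref{eq:allpairmergers} (restricted to $(0,t_k]\times\cE_n$) and $\Phi$ is the deterministic map sending a locally finite point measure $\mu$ on $(0,t_k]\times\cE_n$ to the tuple whose $j$‑th entry is $\bigl(\prod_{(s,\alpha)\in\mu,\,s\le t_j}\mathsf{Coag}_\alpha\bigr)(\xi_0)$, with the chronological ordering and tie‑breaking of Definition~\ref{def:inhomcoalescentfullygeneral}. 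The map $\Phi$ is locally constant, for the vague topology on locally finite point measures on $(0,t_k]\times\cE_n$, at any $\mu$ whose atoms in $(0,t_k]$ are finite in number, sit at pairwise distinct times, and avoid $\{t_1,\dots,t_k\}$: a vaguely nearby $\mu'$ then carries, in $(0,t_k]$, the same atoms up to a small shift in time and with identical $\cE_n$‑coordinates (the state space being finite), so the chronological order and the collections $\{(s,\alpha)\in\mu:s\le t_j\}$ are unchanged. These three properties hold almost surely for $\cC_\psi\cup\cK_\gamma$: local finiteness was established after Definition~\ref{def:inhomcoalescentfullygeneral}; $\psi(\{t_j\}\times\Delta)=0$ forbids an atom of $\cC_\psi$ at a $t_j$; and $\gamma$ having no atoms makes the renewal times of $\cK_\gamma$ almost surely pairwise distinct (they are sums of strictly positive, non‑atomic, independent variables), almost surely distinct from the deterministic atom‑times of $\cC_\psi$, and almost surely disjoint from $\{t_1,\dots,t_k\}$. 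Hence, by the continuous mapping theorem, it suffices to show $\cC_{\psi_m}\cup\cK_{\gamma_m}\to\cC_\psi\cup\cK_\gamma$ in distribution.

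\emph{The pair mergers.}
For each $1\le i<j\le n$, the renewal process $\cK^{i,j}_{\gamma_m}$ is generated by i.i.d.\ waiting times of law $\gamma_m$. Since $\gamma_m\to\gamma$ weakly, a quantile coupling puts all these waiting times on a common space with almost sure convergence; as $\gamma$ is non‑atomic, $\sum_\ell T^{i,j}_\ell=\infty$ almost surely and all partial sums have non‑atomic laws, so almost surely the number and positions of the renewal points of $\cK^{i,j}_{\gamma_m}$ in $(0,t_k]$ stabilise and avoid $\{t_1,\dots,t_k\}$. Thus $\cK_{\gamma_m}\to\cK_\gamma$ in distribution; since $\cC_{\psi_m}$ and $\cK_{\gamma_m}$ are independent, the joint convergence will follow once $\cC_{\psi_m}\to\cC_\psi$ in distribution is shown.

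\emph{The paintbox mergers --- the crux.}
By Laplace functionals it suffices to prove, for every continuous $\phi\colon(0,t_k]\times\cE_n\to[0,\infty)$ with compact support,
\[
\prod_{(t,x)\in\psi_m}\bigl(1-h_\phi(t,x)\bigr)\ \longrightarrow\ \prod_{(t,x)\in\psi}\bigl(1-h_\phi(t,x)\bigr),
\qquad
h_\phi(t,x):=\sum_{\alpha\ne\xi_0^{(n)}}p(x;\xi_0^{(n)},\alpha)\bigl(1-e^{-\phi(t,\alpha)}\bigr).
\]
Two facts drive this. First, $x\mapsto p(x;\xi,\eta)$ is continuous on the \emph{ranked} simplex $\Delta$: truncating \eqref{eq:paintboxtransitionprime} to the first $M$ buckets yields a function continuous in $(x_1,\dots,x_M)$ that differs from $p$ by at most $\binom{n}{2}\sum_{i>M}x_i^2\le\binom{n}{2}\,x_{M+1}\le\binom{n}{2}/(M+1)$, uniformly on $\Delta$ (using $x_{M+1}\le\dots\ge0$ and $\sum_i x_i\le1$), so $p$ is a uniform limit of continuous functions; consequently $h_\phi$ is bounded and continuous on $(0,\infty)\times(\Delta\setminus\{\mathbf{0}\})$ and satisfies $0\le h_\phi(t,x)\le 1-p(x;\xi_0^{(n)},\xi_0^{(n)})\le\binom{n}{2}\langle x,x\rangle$ by a union bound. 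Second, $x\mapsto\langle x,x\rangle$ is itself continuous and bounded by $1$ on $\Delta$, so weak convergence of $\psi_m$ in $\cN_{\mathsf{loc}}^2$, applied to test functions of the form $g(t)\langle x,x\rangle$ and sandwiched between mollified time‑indicators, gives $\sum_{(t,x)\in\psi_m,\,t\le t_k}\langle x,x\rangle\to\sum_{(t,x)\in\psi,\,t\le t_k}\langle x,x\rangle<\infty$. Now split off the finitely many atoms with $\langle x,x\rangle>\rho_0:=1/(2\binom{n}{2})$; on the complement $p(x;\xi_0^{(n)},\xi_0^{(n)})\ge\tfrac12$, hence $-\log(1-h_\phi)\le 2h_\phi\le C_n\langle x,x\rangle$ there. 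One writes $-\log\prod_{(t,x)\in\psi_m}(1-h_\phi(t,x))$ as a \emph{finite} sum over the macroscopic atoms --- which converge in $\Delta$ to those of $\psi$ (vague convergence on the compact set $[\delta,t_k]\times\{\langle x,x\rangle\ge\rho_0\}$), at which $h_\phi$ is continuous --- plus the sum $\sum_{\langle x,x\rangle\le\rho_0}-\log(1-h_\phi(t,x))$ of a bounded continuous, $\langle x,x\rangle$‑dominated summand, which converges by the weak convergence of $\psi_m$ together with the mass convergence just noted. This yields the displayed limit, and hence $\cC_{\psi_m}\to\cC_\psi$ in distribution.

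\emph{Conclusion.}
Combining the last two paragraphs with the independence of the paintbox and pair‑merger mechanisms gives $\cC_{\psi_m}\cup\cK_{\gamma_m}\to\cC_\psi\cup\cK_\gamma$ in distribution; since $\cC_\psi\cup\cK_\gamma$ is almost surely a continuity point of $\Phi$ (first paragraph), the continuous mapping theorem delivers the assertion. The main obstacle is the paintbox step: establishing the Laplace‑functional continuity of $\psi\mapsto\mathrm{Law}(\cC_\psi)$ in $\cN_{\mathsf{loc}}^2$, which hinges on the continuity of the paintbox kernel on the \emph{ranked} simplex (where $\langle x,x\rangle$ and $p(x;\cdot,\cdot)$ are continuous even though $|x|_1$ is not), and requires care with the fact that the no‑merger probability $p(x;\xi_0^{(n)},\xi_0^{(n)})$ can vanish --- making the per‑atom log‑weight unbounded --- so that one must isolate the finitely many macroscopic atoms from the summable microscopic ones.
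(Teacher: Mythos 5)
Your proposal is correct and follows essentially the same route as the paper's proof: reduce the claim to convergence in distribution of the driving point processes $\cC_{\psi_m^{}}$ and $\cK_{\gamma_m^{}}^{i,j}$, prove the paintbox part via Laplace functionals and the renewal part via a coupling of the waiting times, and conclude because the finitely many atoms on $(0,t_k^{}]$ almost surely stabilise at times distinct from each other and from $t_1^{},\ldots,t_k^{}$. The only real divergence is in the limit passage for the Laplace functional, where the paper simply tests the bounded continuous function $G_{\lambda,f}$ against $\psi_m^{}$ using its (strong) notion of weak convergence on $\cN$, whereas you get by with vague convergence away from $\mathbf{0}$ plus convergence of the local $\ell^2$-mass by isolating the macroscopic atoms — a more laborious but more robust execution of the same step, which also makes explicit the continuity of $x \mapsto p(x;\xi,\eta)$ on the ranked simplex that the paper leaves as "easily checked".
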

Note that when $\gamma$ has a pure point part, there is a positive probability that
$\cK_\gamma^{i,j}$ will contain atoms at the same time for multiple $i,j$. When this happens, the finite-dimensional distributions of the coalescent are in fact not continuous in $\cC_\psi$ and $\cK_\gamma$.
\begin{proof}[Proof of Lemma~\ref{lem:inhomcoalescentfddcontinuity}]
We claim that as $m \to \infty$, $\cC_{\psi_m^{}}$ and 
$\cK_{\gamma_m^{}}^{i,j}$ converge to
$\cC_\psi$ and $\cK_\gamma^{i,j}$ in distribution with respect to the vague topology on $(0,\infty) \times \cE_n$. Assuming this for now, we may further assume by Skorokhod's representation theorem that this convergence holds almost surely. If we denote the atoms (in chronological order) of $\cC_\psi^{}$ by
$(t^{(1)},\alpha^{(1)}),(t^{(2)},\alpha^{(2)}),\ldots$ and those of 
$\psi_m^{}$ by $(t^{(1,m)},\alpha^{(1,m)}),(t^{(2,m)},\alpha^{(2,m)}),\ldots$, we have 
\begin{equation*}
(t^{(i,m)},\alpha^{(i,m)}) \stackrel{n \to \infty}{\longrightarrow}
(t^{(i)},\alpha^{(i)}).
\end{equation*}
Because we equipped $\cE_n$ with the discrete topology,
$\alpha^{(i,m)} \stackrel{m \to \infty}{\longrightarrow} 
\alpha^{(i)}$ means that
$\alpha^{(i,m)} = \alpha^{(i)}$ for sufficiently large $m$. 
By our assumptions, $t^{(1)},t^{(2)},\ldots$ are all distinct from $t_1^{},t_2^{},\ldots$ and from each other. Similarly for 
$\cK_\gamma^{i,j}$ and $\cK_\gamma$
so that we have 
$\Pi_{t_j^{}}^{\psi_m^{},\gamma_m^{}} =
\Pi_{t_j^{}}^{\psi,\gamma}
$
for all $j \in \{1,\ldots,k\}$ when $m$ is sufficiently large.

It remains to verify the claimed convergence of $\cC_{\psi_m^{}}$ and $\cK_{\gamma_m^{}}^{i,j}$.
We start with $\cC_{\psi_m^{}}$.
By~\cite[Theorem 1.1]{Kallenberg1973} it suffices to show for each continuous and compactly supported function 
$f : (0,\infty) \times \cE_n \to [0,\infty)$
that $\cC_{\psi_m^{}} (f)$ converges in distribution
to $\cC_\psi (f)$. More explicitly, for a continuous $f$ supported on
$(0,T) \times \cE_n$,
we need to show that
\begin{equation} \label{eq:distrconvergence}
\sum_{i \, : \, 0 < t^{(i,m)} < T} 
f(t^{(i,m)},\alpha_{t^{(i,m)},x^{(i,m)}}^{}) 
\stackrel{m \to \infty}{\longrightarrow}
\sum_{i \, : \, 0 < t^{(i)} < T} 
f(t^{(i)},\alpha_{t^{(i)},x^{(i)}}) 
\end{equation}
in distribution. To show this, it is enough, by the nonnegativity of $f$, to prove convergence of the Laplace transform. Fix $\lambda > 0$. 
Because the $\alpha_{t^{(i)},x^{(i)}}^{}$ are independent, the Laplace transform of the right-hand side can be written as
\begin{equation*}
\begin{split}
&\E \Big [
\exp \Big ( 
-\lambda 
\sum_{i \, : \, 0 < t^{(i)} < T} 
f(t^{(i)},\alpha_{t^{(i)},x^{(i)}}^{}) 
\Big ) 
\Big ] \\
& \quad =
\prod_{i \, : \, 0 < t^{(i)} < T}
\E \Big [
\exp \Big ( 
-\lambda 
f(t^{(i)},\alpha_{t^{(i)},x^{(i)}}^{})
\Big ) 
\Big ] \\
& \quad =
\prod_{i \, : \, 0 < t^{(i)} < T}
\Big ( p \big (x^{(i)};\xi_0^{(n)},\xi_0^{(n)} \big ) +
\sum_{\alpha \in \cE_n 
\setminus \{ \xi_0^{(n)} \}}
p(x^{(i)};\xi_0^{(n)},\alpha) 
\exp \big ( 
-\lambda 
f(t^{(i)},\alpha)
\big )
\Big ) \\
& \quad =
\exp \bigg (
\sum_{i \, : \, 0 < t^{(i)} < T}
\log \Big ( p \big (x^{(i)};\xi_0^{(n)},\xi_0^{(n)} \big ) +
\sum_{\alpha \in \cE_n 
\setminus \{ \xi_0^{(n)} \}}
p(x^{(i)};\xi_0^{(n)},\alpha) 
\exp \big ( 
-\lambda 
f(t^{(i)},\alpha)
\big )
\Big )
\bigg ) \\
& \quad = 
\exp \big (   
\psi ( 
G_{\lambda,f}
)
\big )
\end{split}
\end{equation*}
where $G_{\lambda,f}$ is a real-valued function on
$(0,\infty) \times \cE_n$, defined via
\begin{equation*}
G_{\lambda,f} (t,x) \defeq 
\log \Big ( p \big (x;\xi_0^{(n)},\xi_0^{(n)} \big ) +
\sum_{\alpha \in \cE_n}
p(x;\xi_0^{(n)},\alpha) 
\exp \big ( 
-\lambda 
f(t,\alpha)
\big ) 
\Big ).
\end{equation*}
Similarly, we can write the Laplace transform of the left-hand side as 
$\exp \big ( \psi_m^{} (G_{\lambda,f} )  \big )$. The convergence follows because of our assumption that $\psi$ is the weak limit of $\psi_m^{}$ and because $G_{\lambda,f}$ is bounded and continuous, as is easily checked.

The convergence of $\cK_{\gamma_n^{}}^{i,j}$ to $\cK_\gamma^{i,j}$ follows from the Skorokhod representation theorem applied to the waiting times $T_{\gamma_n^{}}^{i,j}$.
\end{proof}

\begin{remark}
\label{rmk:jumpcharacterisation}
As a jump-hold Markov chain, we can characterise the $(\psi,c)$-coalescent (or, more generally, the $(\psi,\gamma)$-coalescent) via the distribution of its first jump, i.e. the joint distribution of the time at which the first jump after any given time $t \geqslant 0$ takes place and the state reached by that jump. See
Appendix~\ref{app:jumphold} for details.
\hfill \mbox{$\Diamond$}
\end{remark}

We are mainly concerned with the following special cases. Recall that $\Psi$ is a Poisson point process on 
$(0,\infty) \times ( \Delta \setminus \{ \bf 0 \})$
with intensity 
$\langle x,x \rangle^{-1} \, \Xi (\dd x)$,
and recall also the definitions of
$\Psi_\varepsilon^{}$, $\Psi^{(N)}$ and $\Psi^{(N)}_\varepsilon$ in 
\eqref{eq:psilimitcutoff}, \eqref{eq:PsiNdef} and
\eqref{eq:psicutoff}.

\begin{defn} \label{def:coalescents}
Conditionally on $\Psi$, $\Psi^{(N)}$ and with $c_{\mathsf{pair}}^{}$ given in \eqref{eq:cpair}, we write 
$\Pi = (\Pi_t)_{t \geqslant 0}$ for the inhomogeneous
$(\Psi,c_{\mathsf{pair}}^{})$-coalescent in the sense of Definition~\ref{def:inhomcoalescentfullygeneral}, defined on the probability space $(\Omega,\cF,\bf P)$. Additionally, we write $\Pi^\varepsilon$ for the inhomogeneous
$(\Psi_\varepsilon,c_{\mathsf{pair}}^{})$-coalescent and 
$\Pi^{(N,\varepsilon)}$ for the \emph{(prelimiting) $\varepsilon$-naive coalescent}, which we define as the inhomogeneous $(\Psi^{(N)},\gamma_N^{})$-coalescent in the sense of Definition~\ref{def:inhomcoalescentfullygeneral} with $\gamma_N^{}$ being the geometric distribution with success probability $c_N^{} c_{\mathsf{pair}}^{}$, i.e.
$\gamma_N^{} (\{k\}) =
c_N^{} c_{\mathsf{pair}}^{} 
(1 -  c_N^{} c_{\mathsf{pair}}^{}  )^{k-1}$ for $k \in \N$. We assume that 
$\Pi$ and $\Pi^\varepsilon$ are defined on the probability space 
$(\Omega,\cF,{\bf P})$ while 
$\Pi^{(N,\varepsilon)}$ is defined on
$(\Omega^{(N)},\cF^{(N)},\P^{(N)})$ carrying the prelimit.
\end{defn}

\paragraph{Transition probabilities of the naive coalescent conditional on $\Psi_\varepsilon^{(N)}$. }
In addition to the above definition of the $\varepsilon$-naive coalescent $\Pi^{(N,\varepsilon)}$ via coalescent flows, we will later need a more explicit description of its (conditional) transition probabilities as a Markov chain indexed by $c_N^{} \N$. 

Assume that for some $t = g c_N^{}$ with $g \in \N$, 
we have $\Pi^{(N,\varepsilon)}_t = \xi \in \cE_n$. We distinguish two cases.
\begin{itemize}
\item 
$\Psi^{(N)}_\varepsilon$ does not have an atom at 
time $(g+1) c_N^{}$, i.e.
$(t + c_N^{}, x) \not \in \Psi^{(N)}_\varepsilon$
for all 
$x \in \Delta \setminus \{ {\bf 0} \}$. Then, almost surely, $\cC_\psi$ in Definition~\ref{def:inhomcoalescentfullygeneral} also has no atom of the form 
$(t + c_N^{}, \alpha)$ for any $\alpha \in \cE_n$. For any pair $1 \leqslant i < j \leqslant n$, $\cK_{i,j}^{}$ has an atom at 
$g + c_N^{}$ with probability $c_N^{} c_{\mathsf{pair}}^{}$, while the probability that there are two or more such pairs is in
$O(c_N^2)$. 
Thus, Definition~\ref{def:inhomcoalescentfullygeneral} implies that
\begin{equation*}
\P^{(N)} \big ( 
\Pi^{(N,\varepsilon)}_{t + c_N^{}} = \eta \, | \, 
\Pi^{(N,\varepsilon)}_t = \xi
\big )
= c_N^{} c_{\mathsf{pair}}^{} \1_{\xi \vdash \eta} + O(c_N^2)
\end{equation*}
for $\eta \in \cE_n$, where $\xi \vdash \eta$ means that $\eta$ arises from $\xi$ by a pair coalescence.
\item 
On the other hand, if $(t + c_N^{},x) \in \Psi_\varepsilon^{(N)}$
for some 
$x \in \Delta \setminus \{ {\bf 0}  \}$,
we have 
\begin{equation*}
\P^{(N)} \big (
\Pi^{(N,\varepsilon)}_{t + c_N^{}} = \eta \, | \,
\Pi_t^{(N,\varepsilon)} = \xi
\big ) =
p(x;\xi,\eta).
\end{equation*}
\item 
Letting 
$\widetilde V \defeq \widetilde V^{(g+1)}$, recall that $\Psi^{(N)}_{\varepsilon}$ has an atom at time $(g+1)c_N^{}$ precisely when 
$\| \widetilde V \| \geqslant \varepsilon$.
We can therefore summarise the two cases to see that
\begin{equation*}
\P^{(N)} \big (
\Pi^{(N,\varepsilon)}_{t + c_N^{}} = \eta \, | \,
\Pi_t^{(N,\varepsilon)} = \xi
\big )
=
p_\varepsilon^{} (\widetilde V; \xi, \eta)
\end{equation*}
with
\begin{equation} \label{eq:pepsilon}
p_\varepsilon^{} (\widetilde V; \xi, \eta) 
\defeq
\1_{\| \widetilde V \| < \varepsilon} \1_{\check \xi \vdash \check \eta} \, c_N^{} c_{\textnormal{pair}}^{}
+ \1_{\| \widetilde V  \| \geqslant \varepsilon}\, p(\widetilde V;  \xi, \eta) + O(c_N^2),
\end{equation}
where the constant implied by $O(c_N^2)$ is uniform in $\widetilde V$ and $p$ is given in 
\eqref{eq:paintboxtransitionprime}.
\end{itemize}

\begin{remark}
The reader should bear in mind that the laws of the coalescents 
$\Pi,\Pi^\varepsilon$ and $\Pi^{(N,\varepsilon)}$ depend on the point processes $\Psi$, $\Psi_\varepsilon$ and $\Psi^{(N)}_\varepsilon$, as well as the asymptotic pair coalescence rate $c_{\mathsf{pair}}^{}$ and the geometric distribution $\gamma_n^{}$. Thus, to be perfectly precise we would have to denote them by $\Pi^{c_{\mathsf{pair}}^{},\Psi}$,
$\Pi^{c_{\mathsf{pair}}^{},\Psi_\varepsilon}$ and
$\Pi^{N,\gamma_N^{},\Psi_\varepsilon^{(N)}}$. We refrain from doing this to avoid excess notational clutter.
\hfill \mbox{$\Diamond$}
\end{remark}

Observe that $\Psi_\varepsilon^{(N)}$ converges weakly in distribution to
$\Psi_\varepsilon^{}$ as $N \to \infty$ for any fixed $\varepsilon > 0$. Moreover, as $\varepsilon \to 0$,
$\Psi_\varepsilon^{}$ converges weakly in probability to $\Psi$. Indeed, since $\Psi_\varepsilon^{}$ was defined as the cutoff of $\Psi$, this convergence even holds almost surely by dominated convergence. Together with the convergence of the geometric distribution to the exponential distribution, the following is an immediate consequence of Lemma~\ref{lem:inhomcoalescentfddcontinuity}.

\begin{lemma}[Steps 3 and 4 in the roadmap in Subsection~\ref{subsec:roadmap}]
\label{lem:naivetolimit}

Let $\Psi^{(N)}_\varepsilon$ and $\Psi$ as above. Then, for $t_1^{}, \ldots, t_k^{} \geqslant 0$ and 
$\xi_1^{},\ldots,\xi_k^{} \in \cE_n$, the following convergence holds in distribution.
\begin{equation*}
\lim_{\varepsilon \to 0} \lim_{N \to \infty}
\P^{(N)} \big ( \Pi_{t_1^{}}^{(N,\varepsilon)} = \xi_1^{},
\ldots,
\Pi_{t_k^{}}^{(N,\varepsilon)} = \xi_k^{}
\, \big | \, \Psi_{\varepsilon}^{(N)}          
\big )
=
{\bf P} \big (                                                
\Pi_{t_1^{}}^{} = \xi_1^{},
\ldots,
\Pi_{t_k^{}}^{} = \xi_k^{}
\, \big | \, \Psi 
\big ).
\end{equation*}
\qed
\end{lemma}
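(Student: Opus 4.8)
The plan is to deduce the statement from Lemma~\ref{lem:inhomcoalescentfddcontinuity} by applying it twice: first for the inner limit $N\to\infty$ with $\varepsilon$ fixed, then for the outer limit $\varepsilon\to 0$. The starting observation is that, by Definition~\ref{def:coalescents}, all three processes occurring in the statement are inhomogeneous $(\psi,\gamma)$-coalescents in the sense of Definition~\ref{def:inhomcoalescentfullygeneral}: $\Pi^{(N,\varepsilon)}$ is the $(\Psi_\varepsilon^{(N)},\gamma_N^{})$-coalescent (with $\gamma_N^{}$, rescaled by $c_N^{}$, read as a probability law on $c_N^{}\N\subseteq(0,\infty)$), $\Pi^\varepsilon$ is the $(\Psi_\varepsilon^{},c_{\mathsf{pair}}^{})$-coalescent, and $\Pi$ is the $(\Psi,c_{\mathsf{pair}}^{})$-coalescent. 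Hence each of the conditional probabilities in the statement is a \emph{deterministic} functional of the corresponding driving point process: writing $\Phi_N^{}(\psi)\defeq\P\big(\Pi^{\psi,\gamma_N^{}}_{t_1^{}}=\xi_1^{},\ldots,\Pi^{\psi,\gamma_N^{}}_{t_k^{}}=\xi_k^{}\mid\Pi^{\psi,\gamma_N^{}}_0=\xi_0^{}\big)$ and $\Phi_\infty^{}$ for the same quantity with $\gamma=\mathsf{Exp}(c_{\mathsf{pair}}^{})$, we have $\P^{(N)}\big(\Pi^{(N,\varepsilon)}_{t_1^{}}=\xi_1^{},\ldots\mid\Psi_\varepsilon^{(N)}\big)=\Phi_N^{}(\Psi_\varepsilon^{(N)})$ and likewise ${\bf P}\big(\Pi^\varepsilon_{t_1^{}}=\xi_1^{},\ldots\mid\Psi_\varepsilon\big)=\Phi_\infty^{}(\Psi_\varepsilon)$, ${\bf P}\big(\Pi_{t_1^{}}=\xi_1^{},\ldots\mid\Psi\big)=\Phi_\infty^{}(\Psi)$.

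For the inner limit I would fix $\varepsilon>0$ (enlarging it slightly if necessary so as to avoid the at most countably many values with $\Xi(\partial B_\varepsilon(\mathbf{0}))\neq 0$), recall from the discussion preceding the statement that $\Psi_\varepsilon^{(N)}\to\Psi_\varepsilon$ weakly in distribution on $\cN_{\mathsf{loc}}^2$ as $N\to\infty$ and that the rescaled $\gamma_N^{}$ converge weakly to $\mathsf{Exp}(c_{\mathsf{pair}}^{})$, and then invoke Skorokhod's representation theorem to realise $\Psi_\varepsilon^{(N)}\to\Psi_\varepsilon$ almost surely on an auxiliary probability space. Since the intensity $\dd t\,\langle x,x\rangle^{-1}\Xi(\dd x)$ of $\Psi$ puts no mass on any fixed time-fibre, almost surely $\Psi_\varepsilon(\{t_j^{}\}\times\Delta)=0$ for each $j\in\{1,\ldots,k\}$, and $\mathsf{Exp}(c_{\mathsf{pair}}^{})$ has no pure point part; thus the hypotheses of Lemma~\ref{lem:inhomcoalescentfddcontinuity} hold along almost every realisation, and that lemma yields $\Phi_N^{}(\Psi_\varepsilon^{(N)})\to\Phi_\infty^{}(\Psi_\varepsilon)$ almost surely under the coupling, hence in distribution. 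This is precisely the inner convergence claimed.

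For the outer limit I would use that $\Psi_\varepsilon$ is the restriction of $\Psi$ to $[0,\infty)\times(\Delta\setminus B_\varepsilon(\mathbf{0}))$, so that $\Psi_\varepsilon\to\Psi$ in $\cN_{\mathsf{loc}}^2$ almost surely as $\varepsilon\downarrow 0$ (as recorded in the text; recall $\Psi\in\cN_{\mathsf{loc}}^2$ almost surely). Now the pair-merger law is fixed at $\mathsf{Exp}(c_{\mathsf{pair}}^{})$, which still has no pure point part, and again $\Psi(\{t_j^{}\}\times\Delta)=0$ almost surely, so a second application of Lemma~\ref{lem:inhomcoalescentfddcontinuity} gives $\Phi_\infty^{}(\Psi_\varepsilon)\to\Phi_\infty^{}(\Psi)$ almost surely, a fortiori in distribution. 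Chaining the two limits yields the claim. The only point requiring genuine care — everything else being routine and largely already carried out in the text — is the passage from the deterministic-$\psi$ formulation of Lemma~\ref{lem:inhomcoalescentfddcontinuity} to the random driving measures $\Psi_\varepsilon^{(N)}$ and $\Psi_\varepsilon$; this is exactly what the Skorokhod coupling handles, together with the fact that the bad set $\{\psi:\psi(\{t_j^{}\}\times\Delta)>0\text{ for some }j\}$, on which that lemma's hypotheses fail, is null under the laws of both $\Psi_\varepsilon$ and $\Psi$.
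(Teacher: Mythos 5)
Your argument is correct and is essentially the paper's own proof: the text immediately preceding the lemma records the weak convergence $\Psi_\varepsilon^{(N)}\to\Psi_\varepsilon$ (as $N\to\infty$), the almost-sure convergence $\Psi_\varepsilon\to\Psi$ (as $\varepsilon\to0$), and the convergence of the geometric to the exponential distribution, and then declares the lemma an immediate consequence of Lemma~\ref{lem:inhomcoalescentfddcontinuity}. Your write-up simply makes explicit the two applications of that continuity lemma, the Skorokhod coupling, and the verification of its hypotheses ($\Psi(\{t_j\}\times\Delta)=0$ a.s., no atoms of the pair-merger law, and avoiding the countably many bad $\varepsilon$), all of which is faithful to the intended argument.
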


Recall that $\Pi^N$ denotes the `true' prelimiting coalescent that depends both on the pedigree $\cG^{(N)}$ and the initial choice of the sampled genes; this information is captured by the 
$\sigma$-algebra $\cA^{(N)}$. 
In Section~\ref{sec:paircouplings}, we will carry out the fifth and final step of the roadmap in Subsection~\ref{subsec:roadmap}, proving the following

\begin{lemma}[Step 5 in the roadmap in Subsection~\ref{subsec:roadmap}]
\label{lem:l2approximation}
For all $0 < t_1^{} < \ldots < t_k^{} < \infty$ and 
$\xi_1^{},\ldots,\xi_k^{} \in \cE_n$, write
$\widetilde t_i^{} \defeq \lfloor t_i^{} / c_N^{} \rfloor$. Then,
\begin{equation}
\begin{split}
& \limsup_{N \to \infty} \,
\E^{(N)} \Big [
\Big (
\P^{(N)} \big ( \Pi^{N}_{\widetilde t_1^{}} = \xi_1^{}, \ldots, 
\Pi^{N}_{\widetilde t_k^{}} = \xi_k^{} \, \big | \, \cA^{(N)} \big ) \\
& \quad -
\P^{(N)} \big ( \Pi_{t_1^{}}^{(N,\varepsilon)} = \xi_1^{},
\ldots,
\Pi_{t_k^{}}^{(N,\varepsilon)} = \xi_k^{}
         \, \big | \, \Psi^{(N)}  \big )
\Big )^2 
\Big ]
\stackrel{\varepsilon \to 0}{\longrightarrow} 
0.
\end{split}
\end{equation}
\end{lemma}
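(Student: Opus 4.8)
The plan is to run the second-moment (``$L^{2}$'') coupling argument familiar from random walks in random environments, turning each term in the expansion of the $L^{2}$-distance into a probability for a \emph{pair} of coalescents living on a common environment. Write
\begin{equation*}
A_{N}\defeq\P^{(N)}\big(\Pi^{N}_{\widetilde t_{1}}=\xi_{1},\dots,\Pi^{N}_{\widetilde t_{k}}=\xi_{k}\mid\cA^{(N)}\big),\qquad
B_{N}\defeq\P^{(N)}\big(\Pi^{(N,\varepsilon)}_{t_{1}}=\xi_{1},\dots,\Pi^{(N,\varepsilon)}_{t_{k}}=\xi_{k}\mid\Psi^{(N)}\big),
\end{equation*}
so that $\E^{(N)}[(A_{N}-B_{N})^{2}]=\E^{(N)}[A_{N}^{2}]-2\E^{(N)}[A_{N}B_{N}]+\E^{(N)}[B_{N}^{2}]$. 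On an enlarged probability space introduce a conditionally independent second copy of each process: $\widehat\Pi^{N}$ driven by an independent family $\widehat M$ of Mendelian coins (Definition~\ref{Def:M}) on the \emph{same} pedigree $\cG^{(N)}$, and $\widehat\Pi^{(N,\varepsilon)}$, an independent realisation of the naive coalescent built from the \emph{same} $\Psi^{(N)}$ but independent coagulator draws and renewal clocks (Definition~\ref{def:coalescents}). Since the conditional law of $\Pi^{N}$ given $\cA^{(N)}$ depends only on $\cG^{(N)}$ and the deterministic initial positions, $\Psi^{(N)}$ is $\cA^{(N)}$-measurable, and the naive coalescent's extra randomness is independent of $\cA^{(N)}$, one checks that
\begin{equation*}
\E^{(N)}[A_{N}^{2}]=\P^{(N)}(E\cap\widehat E),\qquad
\E^{(N)}[B_{N}^{2}]=\P^{(N)}(E'\cap\widehat E'),\qquad
\E^{(N)}[A_{N}B_{N}]=\P^{(N)}(E\cap E'),
\end{equation*}
where $E,E'$ are the stated finite-dimensional events for $\Pi^{N}$ and $\Pi^{(N,\varepsilon)}$ and $\widehat E,\widehat E'$ the corresponding events for the hatted copies. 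Everything thus reduces to controlling the joint law of a pair of coalescents on a shared environment.

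The heart of the proof is the following pair-coupling convergence: as $N\to\infty$, each of the three pairs --- (true, true) on a shared pedigree, (naive, naive) on a shared $\Psi^{(N)}$, and (true, naive) on a shared pedigree --- converges in distribution to a pair of coalescents that share their driving point process but are otherwise conditionally independent given it. For a true component I would enlarge the state space from $\cS_{n}$ to a space $\cH_{n}$ of pairs of partitions decorated with the cross-copy co-location data (which blocks of the two copies currently occupy a common ancestral individual), as anticipated in Remark~\ref{rmk:makingPimarkovian}; after averaging out the pedigree and both coin families the pair becomes a homogeneous Markov chain on $\cH_{n}$ whose annealed one-step matrix is computed in Subsection~\ref{subsec:annealedtransitionmatrices}. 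Away from $\varepsilon$-GLIPs this matrix has Möhle's form $\mathsf{Id}+c_{N}B+o(c_{N})$, the $O(1)$ part describing the instantaneous redispersal of co-located genes and the $O(c_{N})$ part the pair coalescences; a separation-of-timescales argument as in \cite{Mohle1998a} (Subsection~\ref{subsec:timescalesep}) reduces each block of $\approx c_{N}^{-1}$ generations between consecutive GLIPs to its coarse-grained effect on $\cE_{n}\times\cE_{n}$. Evaluating the limiting coarse-grained (``aggregate'') transitions (Subsection~\ref{subsec:aggregates}, Appendix~\ref{app:lemmaproof}) via the explicit formula \eqref{eq:aggregatedtransprobs} and its large-family simplification Lemma~\ref{lem:largerep} then shows that between GLIPs each pair of blocks in each copy coalesces at rate $c_{\mathsf{pair}}$, independently over pairs and over the two copies, while at a GLIP with offspring frequencies $x$ each copy independently performs a $\varphi(x)$-paintbox merger. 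For the (true, naive) pair the naive component is already in coarse-grained form by Definition~\ref{def:coalescents} and the same reduction applies, the coupling being only through the atoms of $\Psi^{(N)}$, which both components see. In every case the limit is a pair of coalescents in the sense of Definition~\ref{def:inhomcoalescentfullygeneral}, driven by the common point process ($\Psi$ for a true component, $\Psi_{\varepsilon}$ for a naive one --- these being the $N\to\infty$ vague limits of $\Psi^{(N)}$ and $\Psi^{(N)}_{\varepsilon}$, with the geometric pair-clock converging to $\mathsf{Exp}(c_{\mathsf{pair}})$) but with independent coagulator families and independent pair-merger clocks.

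Granting the pair-coupling convergence, the conclusion is polarisation plus continuity. Let $\mathsf{p}(\psi)$ denote the conditional probability, given $\psi$, of the finite-dimensional event $\{\Pi^{\psi,c_{\mathsf{pair}}}_{t_{1}}=\xi_{1},\dots,\Pi^{\psi,c_{\mathsf{pair}}}_{t_{k}}=\xi_{k}\}$. Because in each limiting pair the two copies are conditionally independent given the shared point process and carry the \emph{same} finite-dimensional event, the three representations above give, for fixed $\varepsilon$,
\begin{equation*}
\lim_{N\to\infty}\E^{(N)}\big[(A_{N}-B_{N})^{2}\big]
=\E\big[\mathsf{p}(\Psi)^{2}\big]-2\,\E\big[\mathsf{p}(\Psi)\,\mathsf{p}(\Psi_{\varepsilon})\big]+\E\big[\mathsf{p}(\Psi_{\varepsilon})^{2}\big]
=\E\Big[\big(\mathsf{p}(\Psi)-\mathsf{p}(\Psi_{\varepsilon})\big)^{2}\Big],
\end{equation*}
where the $\E^{(N)}[B_{N}^{2}]$ step also uses $\Psi^{(N)}_{\varepsilon}\to\Psi_{\varepsilon}$ together with Lemma~\ref{lem:inhomcoalescentfddcontinuity} (i.e.\ it is Lemma~\ref{lem:naivetolimit}). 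Finally, as $\varepsilon\to 0$ we have $\Psi_{\varepsilon}\to\Psi$ almost surely, and since almost surely $\Psi(\{t_{j}\}\times\Delta)=0$ for every $j$, Lemma~\ref{lem:inhomcoalescentfddcontinuity} yields $\mathsf{p}(\Psi_{\varepsilon})\to\mathsf{p}(\Psi)$ almost surely; bounded convergence then gives $\E[(\mathsf{p}(\Psi)-\mathsf{p}(\Psi_{\varepsilon}))^{2}]\to 0$, which is the assertion (and, combined with Lemma~\ref{lem:naivetolimit}, also delivers Theorem~\ref{thm:fddconvergence}).

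The main obstacle is the aggregate computation in the pair setting: showing that at a GLIP the joint transition of the two copies genuinely factorises into two independent $\varphi(x)$-paintbox mergers, up to an error that washes out after the time-scale separation. The delicate point is cross-copy co-location --- the two copies start from the \emph{same} configuration $\xi_{0}$ with their lineages in the same individuals, and although independent Mendelian coins pull them apart within $O(1)$ generations they may reconverge. One must show, in the spirit of the mixing estimates for two conditionally independent walks in a common environment in \cite{BolthausenAndSznitman2002a,BirknerEtAl2013b}, that the total number of generations at which a lineage of one copy and a lineage of the other share an individual is $o(c_{N}^{-1})$, so that at the finitely many (for fixed $\varepsilon$) $\varepsilon$-GLIP generations in $[0,T]$ the two copies' lineages lie in distinct individuals and sample their parental families independently. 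This is exactly what forces $\E^{(N)}[A_{N}B_{N}]$ to share the limit of $\E^{(N)}[A_{N}^{2}]$ and $\E^{(N)}[B_{N}^{2}]$; the remaining steps are bookkeeping with the falling-factorial moment identities of Subsection~\ref{subsec:transitionsconditionalonoffspringnumbers} and the accumulation-of-mass-at-$\mathbf 0$ analysis behind \eqref{eq:cpair}.
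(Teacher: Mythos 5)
Your proposal is correct and follows essentially the same route as the paper: the three-term expansion of the squared $L^2$-distance via conditionally independent copies on a shared pedigree, the enlarged state space $\cH_n$ with Möhle's separation-of-timescales lemma, the computation of the aggregate pair-transition rates $q^{\mathsf{pure}}$, $q^{\mathsf{mix},\varepsilon}$, $q^{\mathsf{pure},\varepsilon}$, and the observation that these coincide as $\varepsilon\to 0$ so the alternating sum vanishes. The only cosmetic difference is that you repackage the final step as $\E\big[(\mathsf{p}(\Psi)-\mathsf{p}(\Psi_\varepsilon))^2\big]\to 0$ via the explicit Poisson-mixture identification of the limit chains, whereas the paper concludes directly from the convergence of the three generators without needing that identification.
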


Before we prove this, let us close this section by showing how the last two lemmas imply the proof of our main result.

\begin{proof}[Proof of Theorem~\ref{thm:fddconvergence}]
We will use the following abbreviations:
\begin{equation*}
\cX^{N} \defeq 
\P^{(N)} \big (  \Pi_{\widetilde t_1^{}}^{N} = \xi_1^{},
\ldots, \Pi_{\widetilde t_k^{}}^{N} = \xi_k^{}
\, \big | \,
\cA^{(N)}
\big ),
\end{equation*}
\begin{equation*}
\cY^{(N,\varepsilon)} \defeq 
\P^{(N)} \big (   
\Pi_{t_1^{}}^{(N,\varepsilon)} = \xi_1^{}, \ldots, 
\Pi_{t_k^{}}^{(N,\varepsilon)} = \xi_k^{}
\, \big | \,
\Psi^{(N)}_{\varepsilon}
\big )
\end{equation*}
and
\begin{equation*}
\cZ \defeq
{\bf P} \big ( \Pi^{}_{t_1^{}} = \xi_1^{},
\ldots,
\Pi^{}_{t_k^{}} = \xi_k^{} 
\, \big | \, \Psi
\big )
\end{equation*}
where $\widetilde t = \lfloor t / c_N^{} \rfloor$ denotes the usual time rescaling. 

Denote by $d_P^{}$ the Lévy-Prokhorov metric on the set of Borel probability measures on $[0,1]$. 
Lemma~\ref{lem:naivetolimit} means that 
\begin{equation*}
\lim_{\varepsilon \to 0} \limsup_{N \to \infty} 
d_P^{} \big ( \cL(\cY^{(N,\varepsilon)}), \cL(\cZ)    \big )
=
0,
\end{equation*}
while Lemma~\ref{lem:l2approximation} shows that (via the characterisation of $d_P^{}$ via couplings)
\begin{equation*}
\lim_{\varepsilon \to 0} \limsup_{N \to \infty} 
d_P^{} \big ( \cL(\cY^{(N,\varepsilon)}), \cL(\cX^{N})    \big )
=
0.
\end{equation*}
Hence, by the triangle inequality,
\begin{equation*}
\begin{split}
&\limsup_{N \to \infty}
d_P^{} \big ( 
\cL(\cX^{N}), \cL(\cZ)   \big ) \\
& \quad \leqslant
\lim_{\varepsilon \to 0} \limsup_{N \to \infty} 
d_P^{} \big ( \cL(\cY^{(N,\varepsilon)}), \cL(\cZ)    \big )
+
\lim_{\varepsilon \to 0} \limsup_{N \to \infty} 
d_P^{} \big ( \cL(\cY^{(N,\varepsilon)}), \cL(\cX^{N})    \big )
=
0
\end{split}
\end{equation*}
\end{proof}

\section{Coupling two coalescents in the same pedigree}
\label{sec:paircouplings}

To complete the proof of Theorem~\ref{thm:fddconvergence}, we still need to show that the $\varepsilon$-naive coalescent
$\Pi^{(N,\varepsilon)}$ is indeed a good approximation of the rescaled prelimiting coalescent 
$\Pi^{N}_{\lfloor c_N^{-1} \cdot  \rfloor}$ as $\varepsilon\to 0$,
i.e. prove Lemma~\ref{lem:l2approximation}.
Evaluating the squared $L^2$-distance of their conditional finite-dimensional distributions given $\cA^{(N)}$ comes down to 
evaluating the expectations of products of conditional probabilities. Generally, for any two events $A$ and $B$ that are conditionally independent given $\cA^{(N)}$, the definition of conditional independence and the law of total probability implies that
\begin{equation} \label{quenchedtoannealed}
\E^{(N)} \big [ 
\P^{(N)} \big ( A \, \big | \, \cA^{(N)}  \big )   
\P^{(N)} \big ( B \, \big | \, \cA^{(N)}  \big )           
\big ]
=
\E^{(N)} \big [ \P^{(N)} \big (
A \cap B \, \big | \, \cA^{(N)}     \big ) \big ]
=
\P^{(N)} \big (A \cap B   \big ).
\end{equation}
Since our goal is to compare the
finite-dimensional distributions of
$\Pi^{N}_{\lfloor c_N^{-1} \cdot \rfloor}$ and $\Pi^{(N,\varepsilon)}$, we want to apply 
this 
to events of the form $\{ \cX_{t_1^{}} = \xi_1^{}, \ldots,
\cX_{t_k^{}} = \xi_k^{} 
\}$ with 
$\cX \in 
\big \{ \Pi^{N}_{\lfloor c_N^{-1} \cdot \rfloor},
\Pi^{(N,\varepsilon))}  \big \}$, 
\mbox{$t_1^{},\ldots,t_k^{} \in \R_{> 0}$} 
and $\xi_1^{},\ldots,\xi_k^{} \in \cE_n$.
Hence, our first task is to couple each choice of
$\cX, \cY \in \big \{\Pi^{N},\Pi^{(N,\varepsilon)}   \big \}$ so that they evolve conditionally 
independently with respect to $\cA^{(N)}$. We will then  
investigate the asymptotics of their \emph{annealed} finite-dimensional distributions, marginal to the pedigree. 
Keep in mind that  the naive coalescent 
$\Pi^{(N,\varepsilon)}$ is measurable with respect to $\Psi_\varepsilon^{(N)}$, which in turn  is measurable with respect to $\cA^{(N)}$.

More specifically, $\cX$ and $\cY$ will use the same realisation of the pedigree $\cG^{(N)}$, and consequently, in the case of the naive coalescent, the same realisation of $\Psi_{\varepsilon}^{(N)}$. However, they will use two independent realisations 
of the coin flips $M_{c,i,g}$ from Definition~\ref{Def:M} 
(or independent realisations of the random point processes
$\cC_{\Psi^{(N)}_\varepsilon}$ and $\cK_{\gamma_N^{}}$
introduced in Section~\ref{sec:inhomcoal}). 
We will use $\hat \cdot$ and $\check \cdot$ to distinguish the two conditionally independent copies, writing
\begin{itemize}
\item 
$\big (
\hat \Pi^N, \check \Pi^N 
\big )^{\mathsf{avg}}$ 
for two conditionally independent realisations of $\Pi^N_{\lfloor c_N^{-1} \cdot \rfloor}$
\item 
$\big (
\hat \Pi^N, \check \Pi^{(N,\varepsilon)}
\big )^{\mathsf{avg}}$ for 
conditionally independent realisations of  $\Pi^N_{\lfloor c_N^{-1} \cdot \rfloor}$ and $\Pi^{(N,\varepsilon)}$
\item 
$\big (
\hat \Pi^{(N,\varepsilon)}, \check \Pi^{(N,\varepsilon)}
\big )^{\mathsf{avg}}$ for two conditionally independent realisations of 
$\Pi^{(N,\varepsilon)}$. 
\end{itemize}
By a slight abuse of notation, we let the time rescaling be implicit in the notation 
$\hat \Pi^N$ and $\check \Pi^N$ so that all of these processes are indexed by 
$\R_{\geqslant 0}$. Noting that they are constant on $[i c_N^{}, (i+1) c_N^{} )$ for all 
$i \in \N_0$, they can also naturally be thought of as discrete-time Markov chains indexed by
$c_N^{} \N_0$, which is the point of view we will adopt below.

In the following subsection, we provide the transition probabilities of these three Markov chains. Following the same strategy as in~\cite{BirknerEtAl2018} to compute the limit of their annealed (averaged over the random pedigree) finite-dimensional distributions as $N \to \infty$, we will make use of a separation-of timescales-argument from~\citet[Lemma 1 and Theorem 1]{Mohle1998a}.

A subtle point is that, in spite of the suggestive notation, the state space of
$\big (
\hat \Pi^N, \check \Pi^{N}
\big )^\mathsf{avg}$ is \emph{not} $\cS_n \times \cS_n$. This is because in order to obtain a Markov process, we need to not only keep track of the grouping of ancestral genes into diploid individuals for both coalescent copies individually (which
was the motivation behind introducing the state space 
$\cS_n$, see Remark~\ref{rmk:makingPimarkovian}), but we also need to keep track of how individuals are shared among the two copies. For this, we will recall from~\cite{TyukinThesis2015} an appropriate state space for
$\big ( \hat \Pi^N, \check \Pi^N  \big )^\mathsf{avg}$. 
In contrast, because the blocks of the naive coalescent $\Pi^{(N,\varepsilon)}$ are not associated with concrete ancestral genes with specific positions within the pedigree, it is straightforward to view 
$\big (
\hat \Pi^N, \check \Pi^{(N,\varepsilon)} 
\big )^\mathsf{avg}$
and
$\big ( \hat \Pi^{(N,\varepsilon)}, \check \Pi^{(N,\varepsilon)}  
\big )^\mathsf{avg}$ 
as taking values in $\cS_n \times \cE_n$ and
$\cE_n \times \cE_n$, respectively.

We introduced these couplings primarily as a useful technical tool to control the $L^2$-distance between the conditional
finite-dimensional distributions
of the proper prelimiting coalescent $\Pi^N$ and those of its naive approximation
$\Pi^{(N,\varepsilon)}$. However, the coupling 
$\big (
\hat \Pi^N, \check \Pi^N
\big )^\mathsf{avg}$ also has a clear biological interpretation; it represents the (annealed) evolution of two genealogies associated with two independently assorting loci. 

\subsection{State spaces and transition probabilities}
\label{subsec:annealedtransitionmatrices}
In the following, we will drop the
superscript $\mathsf{avg}$ in our notation. Unless stated otherwise,
we always think of 
$\big (
\hat \Pi^N, \check \Pi^N
\big )$,
$\big (
\hat \Pi^N, \check \Pi^{(N,\varepsilon)}
\big )$ and
$ \big (
\hat \Pi^{(N,\varepsilon)}, 
\check \Pi^{(N,\varepsilon)}
\big )$ as Markov chains evolving under the average law (with regards to the pedigree).

We postpone addressing the issue of defining an appropriate state space for
$\big ( \hat \Pi^N, \check \Pi^N  \big )$ and
start by describing the transition matrices of 
$\big (\hat \Pi^N, \check \Pi^{(N,\varepsilon)}  \big )$ and
$\big ( \hat \Pi^{(N,\varepsilon)},
\check \Pi^{(N,\varepsilon)}   \big )$.
Recall from Subsection~\ref{subsec:transitionsconditionalonoffspringnumbers} that
$\pi^{N,n}(V;\cdot,\cdot)$
given in
\eqref{eq:transitionsconditionedonnumbers}
denotes the 
transition matrix of $\Pi^N$ (and hence of $\hat \Pi^N$ and $\check \Pi^N$) conditional on the offspring numbers
$V = (V_{i,j})_{1 \leqslant i,j \leqslant N}$. Dropping the sample size, we write $\pi^N$ instead of $\pi^{N,n}$. Similarly, we described the conditional transition probabilities of the naive coalescent $\Pi^{(N,\varepsilon)}$ right after Definition~\ref{def:coalescents}.

Let 
$(\hat \xi, \check \xi), (\hat \eta, \check \eta) 
\in \cS_n \times \cE_n$. Assume also that $t \in c_N^{} \N$ and that $V$ is the offspring matrix in (backward) generation 
$t/c_N^{} + 1$ 
corresponding to rescaled time
$t + c_N^{}$. 
Conditional on $V$, we let 
$\check \Pi^{(N,\varepsilon)}$ be conditionally independent of the pedigree $\cG^{(N)}$
given $V$, and $\Pi^N$ a realisation of the coalescent in the pedigree $\cG^N$ as in Subsection~\ref{subsec:ncoalescent}

Then, averaging over the conditional distribution of $\cG^{(N)}$ given $V$, we get, according to our discussion following
Definition~\ref{def:coalescents},
\begin{equation*}
\begin{split}
&\P^{(N)} \big (
\hat \Pi^{N}_{t + c_N^{}} = \hat \eta,
\check \Pi^{(N,\varepsilon)}_{t + c_N^{}} = \check \eta
\, |\,
\hat \Pi^N_t = \hat \xi, \check \Pi^{(N,\varepsilon)}_t = 
\check \xi, 
V
\big ) \\
& \quad = \P^{(N)} \big (
\hat \Pi^N_{t + c_N^{}} = \hat \eta 
\, | \,
\hat \Pi^N_t = \hat \xi, 
V
)
\P^{(N)} \big (
\check \Pi^{(N,\varepsilon)}_{t + c_N^{}} = \check \eta 
\, | \,
\check \Pi^{(N,\varepsilon)}_t = \check \xi, V
) \\
& \quad =
\pi^N (V;\hat \xi, \hat \eta)
\P^{(N)} \big (
\check \Pi^{(N,\varepsilon)}_{t + c_N^{}} = \check \eta 
\, | \,
\check \Pi^{(N,\varepsilon)}_t = \check \xi, V
) \\
& \quad = 
\pi^N (V; \hat \xi, \hat \eta)
p_\varepsilon^{} 
(\widetilde V; \check \xi, \check \eta)
\end{split}
\end{equation*}
with $p_\varepsilon^{}$ from 
\eqref{eq:pepsilon}
and $\widetilde V = (4N)^{-1} (V_{(1)},V_{(1)},V_{(2)},V_{(2)},\ldots,
V_{(N)},V_{(N)},0,0,\dots  )$.
\ From this we obtain the (annealed)
transition probabilities of 
$(\hat \Pi^N, \check \Pi^{(N,\varepsilon)})$ by averaging over the distribution of $V$.
\begin{defn}[Mixed coupling] \label{def:mixedpairtransitions}
Let $\varepsilon > 0$ and let $(\hat \xi, \check \xi), 
(\hat \eta, \check \eta) \in \cS_n \times \cE_n$. We define
\begin{equation*}
\pi_{\mathsf{mix}}^{N,\varepsilon} \big ( (\hat \xi, \check \xi),
(\hat \eta, \check \eta)  \big )
\defeq
\E^{(N)} \Big [ 
\pi^{N} (V; \hat \xi, \hat \eta )
p_\varepsilon^{} (\widetilde V; \check \xi, \check \eta)
\Big ]
\end{equation*}
with $\pi^{N} (V; \hat \xi, \hat \eta )$ given in 
\eqref{eq:transitionsconditionedonnumbers} 
and
with $p_\varepsilon^{}$ given in \eqref{eq:pepsilon}. 
\end{defn}

Similarly, we can define the coupling of two conditionally independent
copies of the naive coalescent as a process on $\cE_n \times \cE_n$ with the following transition matrix.

\begin{defn}[Pair of naive coalescents]
\label{def:purelynaivetransitions}
Let $\varepsilon > 0$ and let 
$(\hat \xi, \check \xi), 
(\hat \eta, \check \eta) \in \cE_n \times \cE_n$. We define
\begin{equation*}
\pi_{\textnormal{pure}}^{N,\varepsilon} 
\defeq 
\E \Big [
p_\varepsilon^{} ( \widetilde V; 
\hat \xi, \hat \eta) 
p_\varepsilon^{} (\widetilde V; 
\check \xi, \check \eta )
\Big ],
\end{equation*}
with $p_\varepsilon^{}$ as in Definition~\ref{def:mixedpairtransitions}.
\end{defn}

We now tackle the problem of defining an appropriate state space for $\big (\hat \Pi^N, \check \Pi^N   \big )$. For any set $M$, we write 
$\cP (M)$ for the power set of $M$ and 
$\cP_{1,2} (M) \defeq \big \{ A \in \cP (M): \#A \in \{1,2\} \big \}$ for the set of all subsets of $M$ containing either one or two elements.
The following space $\cH_n$  was originally 
introduced in \cite{TyukinThesis2015}.

\begin{defn}  \label{def:tyukinspace}
For all $n \in \N$, let
\begin{equation*}
\cH_n \defeq 
\bigg \{                 
\xi \subseteq \cP_{1,2} \Big (      
\cP([n])^2 \setminus \big \{(\varnothing, \varnothing)^T
\big \}
\Big ) : 
\biguplus_{I \in \xi} \biguplus_{C \in I} \dot C = [n]
\textnormal{ for } \cdot = \wedge, \vee
\bigg \}
\end{equation*}
\end{defn}

Let us take some time to think about this definition and consider $\xi \in \cH_n$. Each element $I$ of $\xi$ represents an individual that carries either one or two ancestral genes, for each of the two coalescent copies. The (up to) two chromosomes of individual $I$ carrying ancestral genes are represented by pairs 
$C = (\hat C, \check C) \in I$ of subsets of $[n]$. Here, $\hat C$ is the associated class in the first copy, $\check C$ the associated class in the second copy of the colascent. Either of $\hat C$ or $\check C$ may be empty, but not both. 

We hope that the next 
example, taken from~\cite[Example~4.5]{TyukinThesis2015}
will help clarify things
\begin{example}
Consider $\xi \in \cH_5$, given by
\begin{equation*}
\xi =
\Big \{
\big \{
(\{1,2\}, \varnothing) 
\big \},
\big \{
(\{5\}, \{1,4\}), (\varnothing, \{2\} )
\big \},
\big \{
(\{4\}, \varnothing),(\{3\}, \{3,5\})
\big \}
\Big \}.
\end{equation*}
There are three individuals that carry ancestral material; the first carries the ancestor of samples $1$ and $2$ in $\hat \Pi^N$. The second carries the ancestor of sample $5$ in $\hat \Pi^N$ and samples $1$ and $4$ in
$\check \Pi^N$, and also the ancestor of sample 
 $2$ in $\check \Pi^N$. Finally, the third individual carries the ancestor of sample $4$ in
 $\hat \Pi^N$, and also the ancestor of sample $3$ in $\hat \Pi^N$ and samples $3$ and $5$ in $\check \Pi^N$.
\end{example}

It is clear that for $\cdot \in \{ \wedge, \vee \}$ and 
$\xi \in \cH_n$,
\begin{equation*}
\dot \xi \defeq 
\bigcup_{I \in \xi}\Big  \{
\bigcup_{C \in I} \{ \dot C \} \setminus \{ \varnothing \}        
\Big \} \setminus \Big \{ \varnothing  \Big \}
\end{equation*}
is an element of $\cS_n$ with
\begin{equation} \label{eq:dispersalfromxiinH}
\mathsf{cd} (\dot \xi) = \bigcup_{I \in \xi} \bigcup_{C \in I}
\Big \{
\{ \dot C \}
\Big \} \setminus \Big \{ \{ \varnothing \}   \Big \}.
\end{equation}
Conversely, we can embed $\cS_n \times \cS_n$ (and thus, also 
$\cS_n \times \cE_n$ and $\cE_n \times \cE_n$)
into $\cH_n$. Given
\begin{equation} \label{eq:projectionfromHn1}
\cS_n \ni \hat \xi = 
\big \{
\{\hat C_1, \hat C_2\}, \ldots, 
\{\hat C_{2x-1}, \hat C_{2x}    \}, \hat C_{2x+1}, \ldots, 
\hat C_{\hat b} 
\big \}
\end{equation}
and
\begin{equation} \label{eq:projectionfromHn2}
\cS_n \ni \check \xi = 
\big \{
\{\check C_1, \check C_2\}, \ldots, \{\check C_{2y-1}, \check C_{2y}    \}, \check C_{2y+1}, \ldots, 
\check C_{\check b} 
\big \},
\end{equation}
we let
\begin{equation}
\label{eq:embedding}
\begin{split}
\iota (\hat \xi, \check \xi)
\defeq
&\Big \{
\big \{ 
(\hat C_1,\varnothing),(\hat C_2,\varnothing) 
\big \},\ldots,
\big \{
(\hat C_{2x-1}, \varnothing), (\hat C_{2x}, \varnothing) \big \}, (\hat C_{2x+1},\varnothing),\ldots,(\hat C_{\hat b},\varnothing), \\
& \quad \big \{
(\varnothing, \check C_1), (\varnothing, \check C_2)
\big \}, \ldots, 
\big \{
(\varnothing, \check C_{2y-1}), (\varnothing, \check C_{2y})
\big \},
(\varnothing, \check C_{2y+1}), \ldots, (\varnothing, \check C_{\check b})
\Big \}
\end{split}
\end{equation}
where we omitted the brackets around the singletons. We can also extend the complete dispersal map $\mathsf{cd}$ to $\cH_n$ by letting, for $\xi \in \cH_n$,
$\mathsf{cd} (\xi) \defeq \big (\mathsf{cd} (\hat \xi), \mathsf{cd} (\check \xi) \big ) $ with $\hat \xi$ and $\check \xi$
as in \eqref{eq:dispersalfromxiinH}. We call $\xi \in \cH_n$
\emph{completely dispersed} if $\xi = \iota (\mathsf{cd}(\xi))$.

Moreover, we define a \emph{dispersal map}
$\mathsf{d} : \cH_n \to \cH_n$ by letting, for 
\begin{equation*}
    \xi = \{ \{C_1,C_2 \},\ldots, \{ C_{2x-1}, C_{2x} \}, C_{2x+1},\ldots,C_b \} \in \cH_n,
\end{equation*}
\begin{equation*}
\mathsf{d} (\xi) \defeq
\{C_1,\ldots,C_b\}.
\end{equation*}
and call 
$\xi \in \cH_n$ \emph{dispersed} if 
$\xi =  \mathsf{d}(\xi)  $.

We can now define $\big ( \hat \Pi^N, \check \Pi^N  \big )$ as a Markov process on $\cH_n$. Let us stress once more that in spite of what the notation might suggest, this is \emph{not} a process 
on $\cS_n \times \cS_n$.

We proceed by mimicking our construction 
of $\Pi^{N}$ in Section~\ref{sec:genealogies}, 
based on \emph{two} conditionally independent copies 
$\hat X_1^{},\ldots, \hat X_n^{}$ and 
$\check X_1,\ldots, \check X_n$
of the ancestral lines.
Typically, we will let this process start from the initial condition
$\iota(\xi^{(n)}_0,\xi^{(n)}_0)$ with
$\xi^{(n)}_0 = \big \{ \{1\}, \ldots, \{n\}  \big \}$. In that case, 
given a realisation of $\cG^{(N)}$, we construct $\hat X_1,\ldots,\hat X_n$ as before, with initial positions
$\hat X_j^{}(0) = (0,j)$. For the second set of lineages, we choose the initial positions $\check X_j(0) \defeq (0,n+j)$ as well as an independent realisation of the Mendelian coins.

More generally, when starting with an arbitrary $\xi \in \cH_n$, 
we choose $\hat X_j (0)$ and $\check X_j (0)$ as follows. First, order $\xi$ so that
\begin{equation*}
\xi = \Big \{
\big \{ (\hat C_1,\check C_1), (\hat C_2,\check C_2) \big \}, \ldots,
\big \{ (\hat C_{2x-1}, \check C_{2x-1}), (\hat C_{2x},\check C_{2x}) \big \},
(\hat C_{2x+1},\check C_{2x+1}),\ldots, (\hat C_{b},\check C_{b}),
\Big \}
\end{equation*}
where we allow for some of the blocks $\hat C_1,\ldots, \hat C_b$ and $\check C_1,\ldots,\check C_b$ to be empty; however, for each $i$, the union
$\hat C_i \cup \check C_i$ must not be empty. For $1 \leqslant i \leqslant x$, we pick chromosomes
$g_{2i-1}^{} \defeq (0,i)$ and $g_{2i}^{} \defeq (1,i)$, and for 
$2x + 1 \leqslant i \leqslant b$, we pick chromosomes
$g_i^{} \defeq (0,i)$. Then, for all $1 \leqslant i \leqslant b$ and all $1 \leqslant j \leqslant n$, we set $\hat X_j^{}(0) \defeq g_{i}^{}$ where $i$ is chosen so that $j \in \hat C_i^{}$ and we set $\check X_j (0) \defeq g_{i}^{}$ with $i$ chosen so that $j \in \check C_i^{}$. 
The interested reader will quickly verify that this is well-defined. 
Given these initial conditions, we construct the random trajectories $\hat X_j$ and $\check X_j$ just like $X_j$ in Section~\ref{sec:genealogies}, except that we use independent realisations $\hat M_{c,i,g}$ and $\check M_{c,i,g}$ of the Mendelian coins and that we implicitly rescale time by a factor of $c_N^{}$.

We then define, for each $t \in c_N^{}\N$, $(\hat \Pi^{N}_g, \check \Pi^{N}_g)$ by first describing its dispersal
$\mathsf{d}(\hat \Pi^{N}_g, \check \Pi^{N}_g)$.
For this, we define for each $t$ two equivalence relations $\hat \sim_t^{}$ and $\check \sim_t$ similar to \eqref{eq:xinNm.alt} so that
\begin{equation*}
i \dot \sim_t j \defiff \dot X_i(g) = \dot X_j(g), \quad
\cdot = \wedge, \vee.
\end{equation*}

Given an equivalence class $K$ ($L$) of $\hat \sim_t^{}$
($\check \sim_t$), we will write 
$\hat X^{}_K (t) = \big (\hat C_K^{} (t), \hat  I_K^{} (t) \big )$
(
$\check X_L (t) = \big (\check C_L(t),  \check I_L (t)    \big )$
) instead of $\hat X_i^{}(t)$ ($\check X_j(g)$) for any representative $i$ ($j$) of $K$
($L$). 
With this, we let
\begin{equation*}
\begin{split}
\mathsf{d}(\hat \Pi^{N}_g, \check \Pi^{N}_g) \defeq 
& \Big \{
(K,L) : K \in [n] / \! \hat \sim_t^{}, L \in [n] /  \! \check \sim_t, \, \hat X^{}_{K} (t) = \check X_{L} (t)
\Big \} \\ 
& \quad \cup \Big \{
(K,\varnothing) : K \in [n] / \! \hat \sim_t,
\hat X_k^{} (g) \neq \check X_{L} (t) \textnormal{ for all } L \in [n] / \! \check \sim_t
\Big \} \\
& \quad \cup \Big \{
(\varnothing,L) :  L \in [n] / \! \check \sim_t,
\hat X_k^{} (g) \neq \check X_{L} (t) \textnormal{ for all } K \in [n] / \! \hat \sim_t
\Big \}.
\end{split}
\end{equation*}
Next, we group elements of $\mathsf{d}(\hat \Pi^{N}_g, \check \Pi^{N}_g)$ together whenever they represent two chromosomes of the same individual. Formally, for each $(K,L) \in \mathsf{d}(\hat \Pi^{N}_g, \check \Pi^{N}_g)$ (where either $K$ or $L$ may now be empty) we write 
$\mathsf{Ind}(K,L)$ for $\hat I_K^{}(t)$ ($\check I_L (t)$) if $K$ ($L$) is not empty. Then, we define $(\hat \Pi^{N}_g, \check \Pi^{N}_g)$ by replacing two singletons $(K_1,L_1) \neq (K_2,L_2)$ in  $\mathsf{d}(\hat \Pi^{N}_g, \check \Pi^{N}_g)$
 by an unordered pair
$\big \{ (K_1,L_1), (K_2,L_2)         \big \}$ whenever 
$\mathsf{Ind}(K_1,L_1) = \mathsf{Ind} (K_2,L_2)$.

Now that we have described the evolution of 
$\big (\hat \Pi^N, \check \Pi^N  \big )$ conditional on the pedigree, we obtain its annealed transition matrix by averaging over the pedigree. 

\begin{defn} \label{def:purecouplingaveragedoverpedigree}
Given $(\hat \Pi^{N}_0, \check \Pi^{N}_0) = \xi \in \cH_n$, we construct a compatible set of initial conditions $\hat X(0)$ and $\check X(0)$
as above.
Given the pedigree $\cG^{(N)}$, we let them evolve as in Section~\ref{sec:genealogies} to obtain $\hat X(1)$
and $\check X(1)$ and define $(\hat \Pi^{N}_1, \check \Pi^{N}_1)$ in the way described above. For $\eta \in \cH_n$, let
\begin{equation*}
\pi^{N}_{\textnormal{pure}} (\xi,\eta) \defeq 
\E \Big [     
\P \big (     
(\hat \Pi^{N}_1, \check \Pi^{N}_1) = \eta \, \big | \,
\cG^{(N)}, (\hat \Pi^{N}_0, \check \Pi^{N}_0) = \xi
\big )
\Big ]
\end{equation*}
Note that the exchangeability of the construction of $\cG^{(N)}$ means hat the value of this expectation does not depend on the explicit choice of the initial positions $\hat X(0)$ and $\check X(0)$. 
\end{defn}

\subsection{Separation of timescales}
\label{subsec:timescalesep}
As explained at the beginning of this Section, we want to compute the 
finite-dimensional distributions of the processes with transition matrices $\pi_{\mathsf{pure}}^{N}$, $\pi_{\mathsf{mix}}^{N,\varepsilon}$ and 
$\pi_{\mathsf{pure}}^{N,\varepsilon}$ given in Subsection~\ref{subsec:annealedtransitionmatrices}.
This seems difficult because they (specifically the former two) are only defined in a semi-explicit way via the underlying genealogies. Fortunately, for each of the aforementioned processes, complete dispersal happens on a much faster time scale than coalescence, which makes things tractable in the limit. 
This is the content of the following Lemma, which we quote from~\cite{Mohle1998a}.

\begin{lemma} \label{lem:Moehle}
Let $X_N^{} = \big ( X_N^{} (m) \big )_{m \in \N_0}$ be a sequence of time homogeneous Markov chains on a probability space 
$(\Omega,\cF,\P)$ with the same finite state space $\cS$ and let $\Pi_N^{}$ denote the transition matrix of $X_N^{}$. Assume that the following conditions are satisfied:
\begin{enumerate}
\item 
$A \defeq \lim_{N \to \infty} \Pi_N^{}$ exists and $\Pi_N^{} \neq A$ for all sufficiently large $N$.
\item 
$P \defeq \lim_{m \to \infty} A^m$ exists.
\item 
$G \defeq \lim_{N \to \infty} P B_N^{} P$ exists, where 
$B_N^{} \defeq (\Pi_N^{} - A)/ c_N^{}$ and
$c_N^{} \defeq \| \Pi_N^{} - A \|$ for all $N \in \N$.
\end{enumerate}
If the sequence of initial probability measures $\P_{X_N^{} (0)}$ of initial probability measures converges weakly to some probability measure $\mu$, then the finite-dimensional distributions of the process
$
\big ( X_N^{} ( \lfloor t / c_N^{} \rfloor  )         \big )_{t \geqslant 0}
$
converge to those of a time continuous Markov process 
$(X_t)_{t \geqslant 0}$ with initial distribution
\begin{equation*}
X_0 \stackrel{d}{=} \mu,
\end{equation*}
transition matrix $\Pi(t) \defeq P - I + \ee^{tG} = P \ee^{tG}, t > 0$ and infinitesimal generator $G$.
\end{lemma}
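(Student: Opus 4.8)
The plan is to reduce everything to the single statement that, for each fixed $t>0$, the powers $\Pi_N^{\lfloor t/c_N\rfloor}$ converge entrywise to $\Pi(t)\defeq Pe^{tG}$; the convergence of finite-dimensional distributions then follows since these are products of transition matrices paired against the initial law, which converges to $\mu$ by hypothesis. As preparation I would first record the elementary spectral facts. Since $A$ is a stochastic matrix on the finite state space $\cS$ whose powers converge (condition 2), its only eigenvalue on the unit circle is $1$ and that eigenvalue is semisimple; hence $P=\lim_m A^m$ is the spectral projection onto $\ker(A-I)$, it satisfies $P^2=P$ and $AP=PA=P$, and the convergence is geometric, $\|A^m-P\|\leqslant C\rho^m$ for some $\rho\in(0,1)$, in the submultiplicative matrix norm of the statement (for which stochastic matrices have norm $1$). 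Writing $C_N\defeq PB_NP$, condition 3 says $C_N\to G$, and $P^2=P$ gives $PG=GP=G$, hence $PG^k=G^kP=G^k$ for $k\geqslant1$ and therefore $P-I+e^{tG}=Pe^{tG}=e^{tG}P$; in particular $\Pi(s)\Pi(t)=\Pi(s+t)$ for $s,t>0$. Finally, condition 1 gives $c_N>0$ and $c_N\to0$, so $m_N\defeq\lfloor t/c_N\rfloor\to\infty$ with $m_Nc_N\to t$ and $m_Nc_N^2\to0$.

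Next I would expand $\Pi_N^{m_N}=(A+c_NB_N)^{m_N}$ multilinearly and group the resulting words by the number $k$ of factors equal to $c_NB_N$: such a word is $c_N^k\,A^{j_0}B_NA^{j_1}B_N\cdots B_NA^{j_k}$ for a composition $(j_0,\dots,j_k)$ of $m_N-k$ into $k+1$ non-negative parts, and there are exactly $\binom{m_N}{k}$ of them. Inside each word replace every $A$-block $A^{j_i}$ by $P$. Since $\|B_N\|=\|P\|=1$ and $\|A^{j}-P\|\leqslant C\rho^{j}$, a telescoping estimate bounds the replacement error at level $k\geqslant1$ by $c_N^kC\sum_{\text{compositions}}\sum_{i=0}^k\rho^{j_i}\leqslant\frac{C(k+1)}{1-\rho}c_N^k\binom{m_N-1}{k-1}\leqslant\frac{C(k+1)}{1-\rho}\,c_N\,\frac{(m_Nc_N)^{k-1}}{(k-1)!}$, and summing over $k\geqslant1$ yields a total of order $c_N(m_Nc_N+2)e^{m_Nc_N}\to0$; the level $k=0$ contributes $\|A^{m_N}-P\|\leqslant C\rho^{m_N}\to0$. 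After replacement the $k=0$ word is $P$ and, for $k\geqslant1$, the replaced word equals $(PB_NP)^k=C_N^k$ by $P^2=P$; collecting the $k=0$ term $P$ with the contributions $\binom{m_N}{k}c_N^kC_N^k$, $k\geqslant1$, and using $PC_N^k=C_N^k$, one recognizes the main term as $P(I+c_NC_N)^{m_N}=P-I+(I+c_NC_N)^{m_N}$. Since $\|c_NC_N\|\to0$ one has $(I+c_NC_N)^{m_N}=\exp\!\big(m_Nc_NC_N+O(m_Nc_N^2)\big)\to e^{tG}$, so $\Pi_N^{m_N}\to P-I+e^{tG}=\Pi(t)$.

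To finish, fix $0<t_1<\dots<t_r$ and states $x_1,\dots,x_r\in\cS$ and write, with $\widetilde t_i\defeq\lfloor t_i/c_N\rfloor$ and $\widetilde t_0\defeq0$,
\begin{equation*}
\P\big(X_N(\widetilde t_1)=x_1,\dots,X_N(\widetilde t_r)=x_r\big)=\sum_{x_0\in\cS}\P\big(X_N(0)=x_0\big)\prod_{i=1}^r\Pi_N^{\widetilde t_i-\widetilde t_{i-1}}(x_{i-1},x_i).
\end{equation*}
For large $N$ each exponent $\widetilde t_i-\widetilde t_{i-1}$ is positive with $(\widetilde t_i-\widetilde t_{i-1})c_N\to t_i-t_{i-1}$, so the previous paragraph gives $\Pi_N^{\widetilde t_i-\widetilde t_{i-1}}\to\Pi(t_i-t_{i-1})$ (and $\Pi_N^{\widetilde t_1}\to\Pi(t_1)$), while $\P\big(X_N(0)=\cdot\big)\to\mu$ by hypothesis. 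The product therefore converges to $\sum_{x_0}\mu(x_0)\Pi(t_1)(x_0,x_1)\prod_{i=2}^r\Pi(t_i-t_{i-1})(x_{i-1},x_i)$, which by the semigroup property of $\Pi(\cdot)$ on $(0,\infty)$ is exactly the corresponding finite-dimensional distribution of the time-continuous Markov process with initial law $\mu$, transition semigroup $\Pi(t)=P-I+e^{tG}=Pe^{tG}$ and generator $G$.

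The genuinely delicate point is the uniform-in-$k$ control of the replacement errors: one must marry the geometric decay $\|A^m-P\|\leqslant C\rho^m$ to the combinatorics of compositions so that the accumulated error still vanishes after summing over the unboundedly many levels $k$, and so that the same reorganization into $P(I+c_NC_N)^{m_N}$ is legitimate. This is where condition 2 does more than it appears to: the mere existence of $\lim_m A^m$ forces the peripheral spectrum of $A$ to be $\{1\}$ with $1$ semisimple, which is precisely what upgrades boundedness of the powers $A^m$ to a geometric rate of convergence to $P$; without that rate the expansion argument above would not close.
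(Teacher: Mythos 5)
The paper does not prove this lemma at all --- it is quoted verbatim from \citet{Mohle1998a} (his Lemma~1 and Theorem~1) and used as a black box --- so there is no internal proof to compare against. Your argument is, in substance, a correct reconstruction of M\"ohle's original proof: the binomial expansion of $\Pi_N^{m_N}=(A+c_N B_N)^{m_N}$, the replacement of each block $A^{j_i}$ by the projector $P$, and the resummation of the surviving terms into $P(I+c_N PB_NP)^{m_N}\to P\ee^{tG}=P-I+\ee^{tG}$ is exactly the mechanism behind his convergence theorem. The two points you single out as delicate are indeed the ones that matter and you handle both correctly: (i) condition~2 together with stochasticity of $A$ forces the peripheral spectrum to be $\{1\}$ with $1$ semisimple, whence $A^m-P=(A-P)^m$ decays geometrically --- without this rate the composition-counting estimate for the replacement error would not close; (ii) the identities $P^2=P$, $AP=PA=P$, $PG=GP=G$ are what turn the replaced words into $(PB_NP)^k$ and give both $P\ee^{tG}=P-I+\ee^{tG}$ and the semigroup property $\Pi(s)\Pi(t)=\Pi(s+t)$ on $(0,\infty)$ needed for the finite-dimensional-distribution step. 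One cosmetic caveat: the lemma fixes no matrix norm, so $\|B_N\|=1$ and $\|A^j\|=\|P\|=1$ should be read as ``bounded by a uniform constant in whatever submultiplicative norm is chosen''; by equivalence of norms on a finite-dimensional space this only changes constants and does not affect the conclusion. Your observation that the increments $\widetilde t_i-\widetilde t_{i-1}$ satisfy $(\widetilde t_i-\widetilde t_{i-1})c_N\to t_i-t_{i-1}$, so that the single-time convergence applies to each factor of the product formula, correctly closes the argument.
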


Note that in point $3$, no norm is specified for the matrices; hence, we may assume that it is chosen so that the definition of $c_N^{}$ is consistent with our definition as the annealed pair coalescence probability. 

In order to apply Lemma~\ref{lem:Moehle}, we take inspiration from~\cite{BirknerEtAl2018} and order the set $\cH_n$ in a specific way. We focus on the transition matrix 
$\pi_{\textnormal{pure}}^{N}$, but the argument goes through for 
$\pi_{\mathsf{mix}}^{N,\varepsilon}$ and 
$\pi_{\mathsf{pure}}^{N,\varepsilon}$ in pretty much the same way.

Namely, we start with the elements of $\cE_n \times \cE_n$ (seen as a subset of $\cH_n$, see \eqref{eq:embedding} and the surounding discussion), ordered arbitrarily. In between we add the remaining elements of $\mathsf{cd}^{-1}(\xi)$, again in an arbitrary fashion. This yields a decomposition
\begin{equation} \label{eq:blocks}
\pi_{\textnormal{pure}}^{N} = \big ( \overline \pi_{\textnormal{pure}}^{N} (\xi,\eta) \big )_{
(\xi,\eta) \in (\cE_n \times \cE_n)^2}
\end{equation}
where, for each $\xi$ and $\eta$ in $\cE_n \times \cE_n$, 
$\overline \pi_{\textnormal{pure}}^{N} (\xi,\eta)$ is a 
$\# \mathsf{cd}^{-1} (\xi) \times \# \mathsf{cd}^{-1} (\eta)$-matrix.

According to Lemma~\ref{lem:Moehle}, we need to compute 
\begin{equation*}
A_{\textnormal{pure}} 
\defeq
\lim_{N \to \infty} 
\pi_{\textnormal{pure}}^{N}, 
\end{equation*}
and similarly for the others. By the decomposition in \eqref{eq:blocks}, this amounts to computing the limits of submatrices
\begin{equation*}
\overline A_{\textnormal{pure}} (\xi, \eta)
\defeq 
\lim_{N \to \infty} 
\overline \pi_{\textnormal{pure}}^{N} (\xi,\eta)
\end{equation*}
for all $\xi$ and $\eta$ in $\cE_n \times \cE_n$.  It is clear that a transition from any element of 
$\mathsf{cd}^{-1} (\xi)$
to any element of
$\mathsf{cd}^{-1} (\eta)$ has probability of order $O(c_N^{})$ because it involves a coalescence, and pairwise coalescences happen in a single annealed coalescent with probability $c_N^{}$ by definition. Hence,
we have that $\overline A_{\textnormal{pure}} (\xi, \eta) = 0$ 
for $\xi \neq \eta$ so that $A_{\textnormal{pure}}$ will be block diagonal. To understand the diagonal blocks
$\overline A_{\textnormal{pure}} (\xi,\xi)$, we note that the absence of selfing implies that any $I$ (see Definition~\ref{def:tyukinspace}) containing two chromosomes, will be broken up in the next generation. On the other hand, the probability that two chromosomes meet in the same parent individual has the same order as the coalescence probability $c_N^{}$. Thus, the columns of
$\overline A_{\textnormal{pure}} (\xi,\xi)$ that correspond to the elements $\xi' \in \mathsf{cd}^{-1}(\xi)$ that are not dispersed, i.e. which contains some $I$ with $\#I = 2$, vanish.

However, whenever one gene in each of the two coalescents occupy the same chromosome, that is, whenever there is 
an $I$ and $C \in I$ such that both $\hat C$ and $\check C$ are nonempty,
$\hat C$ and $\check C$ are guaranteed to have the same parent and then will again have the same ancestral gene
 with probability $1/2$. However, the total number of ancestral chromosomes is increasing in the sense that the probability that a different gene in each of the two coalescents end up in the same chromosome is of order $O(c_N^{})$; this is because for that to happen, two chromosomes need to have the same parent, which happens with probability of order $O(c_N^{})$. To summarise:
\begin{obs}
The matrix $A_{\textnormal{pure}}$ is block diagonal. Each diagonal block
$\overline A_{\textnormal{pure}}(\xi,\xi)$ is a transition matrix  for a discrete Markov chain on $\mathsf{cd}^{-1}(\xi)$ with absorbing state $\xi$. 
\end{obs}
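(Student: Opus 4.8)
The plan is to verify the three claims in the Observation directly from the structure of the conditional one-step transitions of $\big(\hat\Pi^N,\check\Pi^N\big)$ described above, using the separation-of-timescales decomposition \eqref{eq:blocks}. The key input is the following hierarchy of orders of magnitude, all measured in the annealed pair-coalescence probability $c_N^{}$: (i) a genuine coalescence in either copy — two distinct chromosomes in $\hat\Pi^N$ (resp.\ $\check\Pi^N$) picking the same ancestral gene — has probability $O(c_N^{})$; (ii) two chromosomes landing in the same parent individual \emph{without} coalescing has probability $O(c_N^{})$ as well (it requires the same pair of parents to be chosen, then a different Mendelian coin); (iii) a chromosome that already sits in an individual carrying a second chromosome gets dispersed to a fresh individual in the very next generation with probability $1-O(c_N^{})$, because of the no-selfing assumption. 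Claim (i) is exactly the computation behind $c_N^{}$ in \eqref{eq:cN1}; claims (ii)–(iii) follow from the balls-in-boxes construction \eqref{eq:ballsinboxes} together with $V_{i,i}^{(g+1)}=0$, using that the product $\prod_{1\le i<j\le N}(V^{(g+1)}_{i,j})_{b_{ij}\downarrow}$ in the conditional pedigree law forces two lineages sharing a parent pair to exhaust a falling factorial of the same offspring count, whose expectation scales like $N\cdot\E[(V_1)_{2\downarrow}]/N^2 = O(c_N^{})$ after the appropriate normalisation.

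\textbf{Block-diagonality of $A_{\textnormal{pure}}$.} Fix $\xi=(\hat\xi,\check\xi)$ and $\eta=(\hat\eta,\check\eta)$ in $\cE_n\times\cE_n$ with $\xi\neq\eta$, and take any $\xi'\in\mathsf{cd}^{-1}(\xi)$, $\eta'\in\mathsf{cd}^{-1}(\eta)$ in $\cH_n$. Moving from $\xi'$ to $\eta'$ forces at least one pair of distinct ancestral genes (in $\hat\Pi^N$ or in $\check\Pi^N$, or in both) to merge; by item (i) the conditional-on-$V$ probability of this is $O(c_N^{})$, and averaging over $V$ preserves the bound. Hence $\overline\pi_{\textnormal{pure}}^{N}(\xi,\eta)\to 0$ entrywise, giving $\overline A_{\textnormal{pure}}(\xi,\eta)=0$ for $\xi\neq\eta$, i.e.\ $A_{\textnormal{pure}}$ is block diagonal with blocks indexed by $\cE_n\times\cE_n$. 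The same argument, with $\pi^N$ replaced by $p_\varepsilon^{}$ where appropriate and using $p(x;\zeta,\zeta')\le 2^{\#\zeta}\langle x,x\rangle$ from the discussion around \eqref{eq:noncoal.psi.s<t.0} together with $\E[\langle\widetilde V,\widetilde V\rangle]=O(c_N^{})$, handles $\pi_{\mathsf{mix}}^{N,\varepsilon}$ and $\pi_{\mathsf{pure}}^{N,\varepsilon}$, so the analogous statement holds for all three couplings.

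\textbf{Structure of the diagonal blocks.} For a fixed $\xi=(\hat\xi,\check\xi)$, restrict attention to $\overline A_{\textnormal{pure}}(\xi,\xi)$, a matrix on $\mathsf{cd}^{-1}(\xi)\subseteq\cH_n$. By item (iii), any state $\xi'\in\mathsf{cd}^{-1}(\xi)$ that is \emph{not} dispersed — i.e.\ contains some $I$ with $\#I=2$ — has, in the limit, probability $1$ of moving to a dispersed state in one step (still inside $\mathsf{cd}^{-1}(\xi)$, since no coalescence occurs with limiting probability $1$); hence the column of $\overline A_{\textnormal{pure}}(\xi,\xi)$ indexed by any non-dispersed $\xi'$ is zero. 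Among the dispersed states of $\mathsf{cd}^{-1}(\xi)$, the only limiting transitions are those in which a chromosome carrying an ancestral gene of $\hat\Pi^N$ lands in the same individual as a chromosome carrying an ancestral gene of $\check\Pi^N$ while these genes sit on \emph{distinct} chromosomes of that individual — by item (ii) this has positive limiting probability only when it is \emph{forced}, namely when a class of $\hat\xi$ and a class of $\check\xi$ already share a chromosome and hence a parent; in that case the two lineages, located on the two chromosomes of a common parent, are retained as a pair. Two \emph{different} chromosomes of the two copies meeting in the same parent is an $O(c_N^{})$ event by item (ii) and so contributes nothing in the limit. Therefore $\overline A_{\textnormal{pure}}(\xi,\xi)$ is a stochastic matrix describing a discrete Markov chain on $\mathsf{cd}^{-1}(\xi)$ whose absorbing state is $\iota(\xi)$ itself (once every lineage is on its own chromosome and no two chromosomes of the two copies are paired beyond what $\xi$ dictates, no further non-coalescing moves are possible in the limit). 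This establishes all three assertions of the Observation; the same reasoning applies verbatim to $\pi_{\mathsf{mix}}^{N,\varepsilon}$ and $\pi_{\mathsf{pure}}^{N,\varepsilon}$.

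\textbf{Main obstacle.} The routine part is the order-of-magnitude bookkeeping in items (i)–(iii); the genuinely delicate point is the combinatorial verification that, in the limit, the only surviving off-diagonal entries of $\overline A_{\textnormal{pure}}(\xi,\xi)$ are the \emph{forced} ones, and in particular that the chain on $\mathsf{cd}^{-1}(\xi)$ has $\iota(\xi)$ as its unique absorbing state rather than some smaller dispersed configuration. This requires tracking how the pairing of chromosomes into individuals (the $\cH_n$-structure rather than mere $\cS_n\times\cS_n$) evolves under one step of the two conditionally independent sets of lineages $\hat X$, $\check X$, and checking that the no-selfing mechanism disperses every ``extra'' chromosome without ever, in the limit, creating a new shared chromosome. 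I would make this precise by writing out the one-step conditional law of $\mathsf{d}(\hat\Pi^N_1,\check\Pi^N_1)$ given $\cG^{(N)}$ exactly as in Definition~\ref{def:purecouplingaveragedoverpedigree}, identifying the events of limiting probability $1$ versus $O(c_N^{})$, and reading off the block structure — a computation that is conceptually clear but notationally heavy, and which is the one point where care with the $\cH_n$ encoding is essential.
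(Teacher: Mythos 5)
Your proposal follows the same route as the paper's own justification of this Observation (which in the paper is itself only the informal discussion in the two paragraphs preceding it): off-diagonal blocks vanish because a transition between distinct $\mathsf{cd}^{-1}$-classes requires a coalescence, of probability $O(c_N^{})$; stochasticity of the diagonal blocks then follows since $A_{\textnormal{pure}}$ is a limit of stochastic matrices; and within a diagonal block the no-selfing assumption breaks up any individual carrying two ancestral chromosomes, two chromosomes meet in a common parent only with probability $O(c_N^{})$, and the only order-one dynamics is the behaviour of a chromosome shared by the two coalescent copies. Your identification of $\iota(\xi)$ as the unique absorbing state, reached from every state of $\mathsf{cd}^{-1}(\xi)$, matches what the paper needs for the subsequent computation of $P_{\textnormal{pure}} = \lim_m A_{\textnormal{pure}}^m$.

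One intermediate step is misstated, however. From ``every non-dispersed state moves to a dispersed one in one step with limiting probability one'' you conclude that ``the column of $\overline A_{\textnormal{pure}}(\xi,\xi)$ indexed by any non-dispersed $\xi'$ is zero''. The premise is a statement about the \emph{row} of $\xi'$, not its column, and the conclusion is in fact false for some non-dispersed states: if a chromosome is shared by the two copies (some $C=(\hat C,\check C)$ with both entries nonempty), then with limiting probability $1/2$ the two lineages land on the two \emph{distinct} chromosomes of their common parent, producing a state that contains an $I$ with $\#I=2$ and still lies in $\mathsf{cd}^{-1}(\xi)$; the corresponding column of $\overline A_{\textnormal{pure}}(\xi,\xi)$ therefore carries an entry $1/2$. (The paper's own discussion contains the same over-statement.) This does not damage the Observation --- block-diagonality, stochasticity of the diagonal blocks and the absorbing character of $\xi$ are unaffected, and $\lim_m A_{\textnormal{pure}}^m$ only requires eventual absorption at $\xi$, which your description of the dynamics (shared chromosomes persist with probability $1/2$ per step, split pairs are scattered by no-selfing, and no new sharing is created except with probability $O(c_N^{})$) does deliver --- but the vanishing-columns claim should be dropped or restricted to those non-dispersed states not reachable by splitting a shared chromosome.
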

The projectors needed for 2.\ in Lemma~\ref{lem:Moehle} are readily obtained. 
\begin{equation*}
P_{\textnormal{pure}} \defeq \lim_{m \to \infty} 
\big (A_{\textnormal{pure}}    \big )^m
=
\big (\overline P_{\textnormal{pure}} (\xi,\xi) \big )_{\xi \in \cE_n \times \cE_n}
\end{equation*}
where $\overline P_{\textnormal{pure}} (\xi,\xi)$ is the 
$\# \mathsf{cd}^{-1} (\xi) \times \# \mathsf{cd}^{-1} (\xi)$-matrix
that consists of only $1$s in the first column and only of $0$s everywhere else.

Next, in line with 3 in Lemma~\ref{lem:Moehle}, we set
\begin{equation} \label{eq:purecouplinggenerator}
B^{N}_{\textnormal{pure}} \defeq \frac{1}{c_N^{}} \big ( \pi_{\textnormal{pure}}^{N} - A_{\textnormal{pure}}   \big )
\end{equation}
and compute
\begin{equation*}
G_{\textnormal{pure}} \defeq \lim_{N \to \infty}
P_{\textnormal{pure}} B^{N}_{\textnormal{pure}} 
P_{\textnormal{pure}}.
\end{equation*}
A `back-of-an-envelope' calculation shows that 
$\pi^{N}_{\textnormal{pure}} P_{\textnormal{pure}}$ has the following form. For all $(\xi,\eta) \in \cH_n^2$, the $(\xi,\eta)$-entry vanishes whenever $\eta$ is not completely dispersed; this is because the $\eta$-column of $P_{\mathsf{pure}}$ then vanishes. 
On the other hand, 
if $\eta$ is completely dispersed, then the $(\xi,\eta)$-entry is given by 
$\sum_{\eta' \in \mathsf{cd}^{-1}(\eta)} \pi^{N}_{\textnormal{pure}} (\xi,\eta')$ because, due to the block structure of $P_{\mathsf{pure}}$, the $\eta$-column carries $1$s at the positions corresponding to $\mathsf{cd}^{-1} (\eta)$ and $0$s otherwise. Then, multiplying $P_{\textnormal{pure}}$ from the left, we see that 
$P_{\textnormal{pure}} \pi^{N}_{\textnormal{pure}} P_{\textnormal{pure}}$ has the following block structure.
\begin{equation*}
P_{\textnormal{pure}} \pi^{N}_{\textnormal{pure}} 
P_{\textnormal{pure}}
=
\big (\overline Q^{N}_{\textnormal{pure}} (\xi,\eta)   
\big )_{(\xi,\eta) \in (\cE \times \cE)^2}.
\end{equation*}
Here, the entries of the first column of $\overline Q_{\textnormal{pure}}^{N} (\xi,\eta)$ are all equal to 
$\sum_{\eta' \in \mathsf{cd}^{-1}(\eta)}
\pi_{\textnormal{pure}}^{N} (\xi,\eta')$, and the entries of all other columns vanish.

It remains to compute $P_{\textnormal{pure}} A
P_{\textnormal{pure}}$. Recalling our observation that $A$ is block diagonal with stochastic blocks, it is immediate that 
$P_{\textnormal{pure}} A
P_{\textnormal{pure}} =
P_{\textnormal{pure}}$. Putting all this together, we have shown that
\begin{equation*}
P_{\mathsf{pure}} B^{N}_{\mathsf{pure}} P_{\mathsf{pure}} =
\frac{1}{c_N^{}} \big ( \overline Q^{N}_{\mathsf{pure}} (\xi,\eta)  
- \delta_{\xi. \eta} \mathsf{id}(\xi,\xi) \big)_{(\xi,\eta) \in (\cE \times \cE)^2  }
\end{equation*}
where $\mathsf{id}(\xi,\xi)$ denotes the $\# \xi$-dimensional identity matrix.
Below, we will show that 
\begin{equation*}
\lim_{N \to \infty} \frac{1}{c_N^{}} 
 \big (\widetilde \pi^{N}_{\textnormal{pure}} 
(\xi,\eta) - \delta_{\xi,\eta}    \big )
\end{equation*}
exists for all $\xi, \eta \in \cE_n$, which implies 
\begin{lemma} \label{lem:annealedlimitgenerator}
The generator $G_{\textnormal{pure}}$ has the block structure
\begin{equation*}
G_{\textnormal{pure}} = \big (
\overline G_{\textnormal{pure}} (\xi,\eta)
\big )_{(\xi,\eta) \in (\cE_n \times \cE_n)^2}.
\end{equation*}
The entries of the first column of $\overline G_{\textnormal{pure}} (\xi,\eta)$ are all equal to 
\begin{equation*}
q_{\xi,\eta}^{\mathsf{pure}} \defeq 
\lim_{N \to \infty} \frac{1}{c_N^{}} 
 \big (\widetilde \pi^{N}_{\textnormal{pure}} 
(\xi,\eta) - \delta_{\xi,\eta}    \big )
\end{equation*}
with
\begin{equation*}
\widetilde \pi^{N}_{\textnormal{pure}} 
(\xi,\eta) 
\defeq
\sum_{\eta' \in \mathsf{cd}^{-1} (\eta) } 
\pi_{\textnormal{pure}}^{N} (\xi,\eta').
\end{equation*}
In particular, the finite-dimensional distributions of 
$\big ( \hat \Pi^{N}_t, \check 
\Pi^{N}_t  \big )_{t \in c_N^{} \N}$ converge, as $N \to \infty$, to those of a continuous-time Markov chain on $\cE_n \times \cE_n$ with transition rates 
$q_{\xi, \eta}^{\mathsf{pure}}$. Analogous statements hold for the other two chains as well. More precisely, the finite-dimensional distributions of
$\big ( 
\hat \Pi^N_t, \check \Pi_{t}^{(N,\varepsilon)}
\big)_{t \in c_N^{} \N}$ 
and
$
\big (
\hat
\Pi_{t}^{(N,\varepsilon)},
\check
\Pi_{t}^{(N,\varepsilon)}
\big)_{t \in c_N^{} \N}
$
converge to continuous-time Markov chains on
$\cE_n \times \cE_n$ with transition rates 
$q_{\xi,\eta}^{\mathsf{mix}, \varepsilon}$ and 
$q_{\xi,\eta}^{\mathsf{pure},\varepsilon}$ respectively, given by
\begin{equation*}
q_{\xi,\eta}^{\mathsf{mix},\varepsilon} \defeq 
\lim_{N \to \infty} \frac{1}{c_N^{}} 
 \big (\widetilde \pi^{N,\varepsilon}_{\textnormal{mix}} 
(\xi,\eta) - \delta_{\xi,\eta}    \big )
\end{equation*}
and
\begin{equation*}
q_{\xi,\eta}^{\mathsf{pure},\varepsilon} \defeq 
\lim_{N \to \infty} \frac{1}{c_N^{}} 
 \big (\widetilde \pi^{N,\varepsilon}_{\textnormal{pure}} 
(\xi,\eta) - \delta_{\xi,\eta}    \big )
\end{equation*}
with the \emph{aggregate transition probabilities}
$\widetilde \pi^{N,\varepsilon}_{\textnormal{mix}}$ and
$\widetilde \pi^{N,\varepsilon}_{\textnormal{pure}}$ given by 
\begin{equation*}
\widetilde \pi^{N,\varepsilon}_{\textnormal{mix}} 
(\xi,\eta) 
\defeq
\sum_{\eta' \in \mathsf{cd}^{-1} (\eta) } 
\pi_{\textnormal{mix}}^{N,\varepsilon} (\xi,\eta').
\end{equation*}
and
\begin{equation*}
\widetilde \pi^{N,\varepsilon}_{\textnormal{pure}} 
(\xi,\eta) 
\defeq
\sum_{\eta' \in \mathsf{cd}^{-1} (\eta) } 
\pi_{\textnormal{pure}}^{N,\varepsilon} (\xi,\eta').
\end{equation*}

\end{lemma}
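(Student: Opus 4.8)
The plan is to apply the separation-of-timescales lemma, Lemma~\ref{lem:Moehle}, to each of the three discrete-time Markov chains with transition matrices $\pi_{\mathsf{pure}}^{N}$, $\pi_{\mathsf{mix}}^{N,\varepsilon}$ and $\pi_{\mathsf{pure}}^{N,\varepsilon}$, regarded (via \eqref{eq:embedding}) on subsets of $\cH_n$ and indexed by $c_N^{}\N$. Conditions~1 and~2 of Lemma~\ref{lem:Moehle} have essentially been verified in the discussion preceding the statement: by the Observation, $A_{\mathsf{pure}} = \lim_{N}\pi^{N}_{\mathsf{pure}}$ is block diagonal along the fibres $\mathsf{cd}^{-1}(\xi)$ indexed by $\xi\in\cE_n\times\cE_n$, each diagonal block being a stochastic matrix whose unique absorbing state is the completely dispersed configuration $\xi$, so that $P_{\mathsf{pure}} = \lim_{m}A_{\mathsf{pure}}^m$ exists and equals the projector described there; the same reasoning applies to the mixed and purely naive couplings, where the fibres are even smaller since $\Pi^{(N,\varepsilon)}$ already takes values in $\cE_n$. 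Condition~3 was reduced, in the same discussion, to the existence of $\lim_{N\to\infty}c_N^{-1}\big(\widetilde\pi^{N}_{\mathsf{pure}}(\xi,\eta)-\delta_{\xi,\eta}\big)$ for all $\xi,\eta\in\cE_n\times\cE_n$, together with the analogous limits of the aggregate probabilities $\widetilde\pi^{N,\varepsilon}_{\mathsf{mix}}$ and $\widetilde\pi^{N,\varepsilon}_{\mathsf{pure}}$; granting these, Lemma~\ref{lem:Moehle} immediately yields the asserted block structure of the generators $G_{\mathsf{pure}}$, $G_{\mathsf{mix}}^{\varepsilon}$, $G_{\mathsf{pure}}^{\varepsilon}$ and the convergence of the finite-dimensional distributions. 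So the whole task reduces to establishing the existence of these aggregate transition rates and identifying them.

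For the mixed coupling $(\hat\Pi^N,\check\Pi^{(N,\varepsilon)})$ and the pair of naive coalescents $(\hat\Pi^{(N,\varepsilon)},\check\Pi^{(N,\varepsilon)})$ I would argue directly. By Definitions~\ref{def:mixedpairtransitions} and~\ref{def:purelynaivetransitions}, $\pi^{N,\varepsilon}_{\mathsf{mix}}$ and $\pi^{N,\varepsilon}_{\mathsf{pure}}$ are expectations over the offspring array $V$ of, respectively, $\pi^{N}(V;\cdot,\cdot)\,p_\varepsilon^{}(\widetilde V;\cdot,\cdot)$ and $p_\varepsilon^{}(\widetilde V;\cdot,\cdot)\,p_\varepsilon^{}(\widetilde V;\cdot,\cdot)$. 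Aggregating over $\mathsf{cd}^{-1}(\eta)$ as in \eqref{eq:transitionsconditionedonnumbersaggregated}, using Lemma~\ref{lem:largerep} to replace $\widetilde\pi^{N}(V;\cdot,\cdot)$ by $p(\widetilde V;\cdot,\cdot)$ up to $O(1/N)$, and inserting the two regimes of \eqref{eq:pepsilon}, one finds that the off-diagonal aggregate probability splits into a ``small-family'' part carrying the factor $\1_{\|\widetilde V\|<\varepsilon}\,c_N^{}c_{\mathsf{pair}}^{}$ and a ``GLIP'' part of the shape $\E^{(N)}\big[\1_{\|\widetilde V\|\geqslant\varepsilon}\,p(\widetilde V;\hat\xi,\hat\eta)\,p(\widetilde V;\check\xi,\check\eta)\big]$ (with $p$ replaced by $\widetilde\pi^N$ in one factor for the mixed chain). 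Since $p(x;\cdot,\cdot)$ is $O(\langle x,x\rangle)$ off the diagonal, dividing by $c_N^{}$ and invoking \eqref{eq:PhiNconv} in the form \eqref{eq:PhiNconvprime} restricted to $\{\|x\|\geqslant\varepsilon\}$, together with $\P^{(N)}(\|\widetilde V\|<\varepsilon)\to 1$, gives convergence of the GLIP part to the integral of $\langle x,x\rangle^{-1}p(x;\hat\xi,\hat\eta)p(x;\check\xi,\check\eta)$ against $\Xi$ over $\{\|x\|\geqslant\varepsilon\}$, and of the small-family part to the corresponding $c_{\mathsf{pair}}^{}$-multiple of a pair-coalescence indicator; the diagonal rate is then fixed by the zero-row-sum property of a generator. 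These are precisely the ``rather cumbersome computations'' and I would relegate them to Appendix~\ref{app:lemmaproof}.

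The genuinely delicate case is the purely pedigree-driven coupling $(\hat\Pi^N,\check\Pi^N)$, because $\pi^{N}_{\mathsf{pure}}$ is only defined implicitly, through two conditionally independent families of coalescing random walks on one \emph{shared} pedigree. My approach would be to condition first on the offspring array $V$ of the relevant backward generation: by the exchangeability \eqref{eq:ballsinboxes} of the pedigree time-slice given $V$, the conditional one-step transition depends only on the $\cH_n$-datum, and the ``balls-in-boxes'' representation lets one express the probability that prescribed subsets of the at most $2n$ ancestral genes of the two copies land on prescribed ancestral genes one generation back as an $\E^{(N)}[\cdot\mid V]$ of products of falling factorials of the relevant offspring counts $\widehat V_i$, exactly as in the derivation of \eqref{eq:aggregatedtransprobs}. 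The structural point is that a single GLIP forces \emph{both} copies to merge according to the \emph{same} paintbox $\widetilde V$, but with independent ball-throwing, so that conditionally on the paintbox the two mergers are independent --- this is what produces the product $p(x;\hat\xi,\hat\eta)\,p(x;\check\xi,\check\eta)$ in the limiting GLIP rate --- whereas the accumulation of small families near $\mathbf{0}$ reproduces, by the same computation that gives $c_{\mathsf{pair}}^{}$ in \eqref{eq:cpair}, independent Kingman-type pair mergers at rate $c_{\mathsf{pair}}^{}$ for each pair of each copy separately, with all cross-copy and multi-pair coincidences of order $O(c_N^2)$. The hard part will be the uniform-in-$N$ combinatorial control on $\cH_n$ of how ancestral genes of the two copies can come to occupy a common individual (and how such configurations are immediately dispersed again in the absence of selfing); this is the diploid, GLIP-enriched counterpart of the bookkeeping carried out in \cite{TyukinThesis2015} for the pure Kingman case, and once it is in place the passage to the limit via \eqref{eq:PhiNconv} is routine. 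The details are deferred to Appendix~\ref{app:lemmaproof}.
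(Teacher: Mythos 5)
Your proposal is correct and follows essentially the same route as the paper: Möhle's separation-of-timescales lemma applied to the three couplings, with conditions 1--2 supplied by the block-diagonal structure of $A$ and the dispersal projector, and condition 3 reduced to the existence of the aggregate rates, which are then computed exactly as you describe (the direct expectation computations for the mixed and purely naive couplings, and the balls-in-boxes/falling-factorial computation on $\cH_n$ for the shared-pedigree coupling, relegated to the appendix). Your identification of the key structural point --- one shared paintbox with independent ball-throwing yielding the product $p(x;\hat\xi,\hat\eta)\,p(x;\check\xi,\check\eta)$, and the mass near $\mathbf{0}$ yielding the $c_{\mathsf{pair}}$ terms --- matches the paper's Lemmas on the aggregate rates.
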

\begin{proof}
The statement about the f.d.d. convergence follows immediately from Lemma~\ref{lem:Moehle}. Proving the existence of the limits is the content of Subsection~\ref{subsec:aggregates}.
\end{proof}

\subsection{Aggregate transition probabilities and their scaling limits}
\label{subsec:aggregates}

To finish the proof of Lemma~\ref{lem:l2approximation} and thus of our main theorem, all that remains to do is to compute for $\xi, \eta \in \cE_n \times \cE_n$ the asymptotic transition rates 
$q_{\xi,\eta}^{\textnormal{pure}}$, 
$q_{\xi,\eta}^{\textnormal{pure},\varepsilon}$ and 
$q_{\xi,\eta}^{\textnormal{mix},\varepsilon}$. This computation will rely on a more explicit description of the `aggregate transition matrices' 
$\widetilde \pi^{N}_{\textnormal{pure}}$ and 
$\widetilde \pi^{N,\varepsilon}_{\textnormal{mixed}}$; the basic idea is similar to the argument in Subsection~\ref{subsec:transitionsconditionalonoffspringnumbers} for a single coalescent copy.

Let $\xi = (\hat \xi, \check \xi)$ and $\eta = (\hat \eta, \check \eta)  
\in \cE_n \times \cE_n$ 
with $\dot b \defeq \# \dot \xi$ such that
$\dot \eta$ arises from $\dot \xi$ by merging $\dot r$ groups of classes of sizes $\dot k_1,\ldots,\dot k_{\dot r} \geqslant 2$ with
$\dot s = \dot b - \sum_{\ell = 1}^{\dot r} \dot k_\ell$ not participating in any merger. 

Intuitively, the fact that $\xi$ is completely dispersed means that all blocks of $\hat \xi$ as well as $\check \xi$ represent different genes located on different individuals, and that the sets of genes represented by 
the blocks of $\hat \xi$ and those of $\check \xi$ are disjoint.
As in Subsection~\ref{subsec:transitionsconditionalonoffspringnumbers}, we consider w.l.o.g. only $0$-genes and recall that $ \widehat V_i$ denotes the number of individuals that have $i$ as their $0$-parent.
We start with the computation of $q_{\xi,\eta}^{\mathsf{pure}}$.
\begin{lemma} \label{lem:qpure}
For 
$\xi = (\hat \xi, \check \xi),
\eta = (\hat \eta, \check \eta) \in \cE_n \times \cE_n \subseteq \cH_n$
 and $\xi \neq \eta$, 
\begin{equation*}
\lim_{N \to \infty}
\frac{1}{c_N^{}} 
\widetilde \pi_{\textnormal{pure}}^{N} (\xi,\eta) = 
\int_{\Delta \setminus \{\mathbf{0}\}} 
p(x;\hat \xi,\hat \eta) p(x;\check \xi, \check \eta) 
\frac{\Xi(\dd x)}{\langle x, x \rangle}
+
c_{\textnormal{pair}}^{}
\1_{\hat \xi \vdash \hat \eta, \check \xi = \check \eta^{}}
+
c_{\textnormal{pair}}^{}
\1_{\hat \xi = \hat \eta, \check \xi \vdash \check \eta}^{} 
\eqdef q_{\xi,\eta}^{\mathsf{pure}},
\end{equation*}
where $p$ is given in \eqref{eq:paintboxtransitionprime} and 
$c_{\textnormal{pair}}^{} = 1 - \Xi 
\big (
\Delta \setminus \{ \mathbf{0} \} 
\big )$.
\end{lemma}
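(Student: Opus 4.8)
The plan is to compute the aggregate transition probability $\widetilde\pi^N_{\textnormal{pure}}(\xi,\eta)$ by conditioning on the offspring matrix $V=V^{(1)}$ of the single generation one step back, then decompose the one-generation contribution into a ``GLIP part'' (large families) and a ``small-family part'', and finally match each with its claimed limit. First I would write, using Definition~\ref{def:purecouplingaveragedoverpedigree} together with the conditional independence of the two copies $\hat X$, $\check X$ given the pedigree and the averaging over $\cG^{(N)}$ given $V$ (as in Subsection~\ref{subsec:transitionsconditionalonoffspringnumbers}),
\begin{equation*}
\widetilde\pi^N_{\textnormal{pure}}(\xi,\eta)
= \E^{(N)}\!\left[\,\widetilde\pi^{N}(V;\hat\xi,\hat\eta)\,\widetilde\pi^{N}(V;\check\xi,\check\eta)\,\right] + O(c_N^2),
\end{equation*}
where the $O(c_N^2)$ absorbs the contribution of events in which a chromosome shared between the two copies is involved in a coalescence (such ``coupled'' events happen only when two chromosomes meet in a common parent, which costs an extra factor $c_N$, and the argument from the ``back-of-an-envelope'' discussion preceding Lemma~\ref{lem:annealedlimitgenerator} shows these do not contribute to the limiting rate when both $\xi$ and $\eta$ are completely dispersed). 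Here I use that, because the sets of genes carried by $\hat\xi$ and $\check\xi$ are disjoint for a completely dispersed $\xi\in\cE_n\times\cE_n$, the conditionally-on-$V$ averaged transition for each copy is exactly the aggregate $\widetilde\pi^N(V;\cdot,\cdot)$ of \eqref{eq:aggregatedtransprobs}, up to that same $O(c_N^2)$.

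Next I would split the expectation according to whether the generation is a GLIP. Fix the discrete paintbox $\widetilde V=\varphi\big((V_i/(2N))_i\big)$ of \eqref{eq:discretepaintbox}. On $\{\|\widetilde V\|\ge\varepsilon\}$, Lemma~\ref{lem:largerep} gives $\widetilde\pi^N(V;\hat\xi,\hat\eta)=p(\widetilde V;\hat\xi,\hat\eta)+O(1/N)$ and likewise for the checked copy; multiplying and using the bound $p(x;\cdot,\cdot)\le 2^{\#\xi}\langle x,x\rangle$ (from the paintbox interpretation, as recalled in Section~\ref{sec:inhomcoal}) to control cross terms, one gets
\begin{equation*}
\frac{1}{c_N}\,\E^{(N)}\!\left[\1_{\|\widetilde V\|\ge\varepsilon}\,p(\widetilde V;\hat\xi,\hat\eta)\,p(\widetilde V;\check\xi,\check\eta)\right]
\;\longrightarrow\;
\int_{\{\|x\|\ge\varepsilon\}} p(x;\hat\xi,\hat\eta)\,p(x;\check\xi,\check\eta)\,\frac{\Xi(\dd x)}{\langle x,x\rangle}
\end{equation*}
as $N\to\infty$, by the vague convergence $c_N^{-1}\cL(\widetilde V)(\dd x)\to\langle x,x\rangle^{-1}\Xi(\dd x)$ of Remark~\ref{Rk:Xi'vsXi} (applied to the bounded continuous integrand $x\mapsto p(x;\hat\xi,\hat\eta)p(x;\check\xi,\check\eta)/\langle x,x\rangle$ on $\{\|x\|\ge\varepsilon\}$, choosing $\varepsilon$ outside the at most countably many atoms of $\Xi$ on spheres). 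The error from replacing $\widetilde\pi^N$ by $p(\widetilde V;\cdot,\cdot)$ is $O(1/N)\cdot\E^{(N)}[\1_{\|\widetilde V\|\ge\varepsilon}\langle \widetilde V,\widetilde V\rangle]=O(N^{-1}c_N)$, hence negligible after dividing by $c_N$. Letting $\varepsilon\downarrow0$ afterwards yields the full integral over $\Delta\setminus\{\mathbf 0\}$; the interchange of limits is justified since, for the product of two non-diagonal transitions, the integrand is $O(\langle x,x\rangle)$ near $\mathbf 0$ and $\int_\Delta\Xi(\dd x)<\infty$, while for the ``only one copy moves'' contributions the small-$\|x\|$ regime is exactly what produces the $c_{\textnormal{pair}}$ terms, handled next.

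Finally I would treat the small-family contribution $\{\|\widetilde V\|<\varepsilon\}$, which is where the two Kingman pair-coalescence terms come from. On this event at most one of the two copies can make a non-void transition without incurring an extra factor $c_N$, so the leading contribution is (say) $\widetilde\pi^N(V;\hat\xi,\hat\eta)$ with $\hat\xi\vdash\hat\eta$ and $\check\xi=\check\eta$, plus the symmetric term, plus $O(c_N^2)$. Summing over generations and using the now-standard computation of the annealed pair-coalescence rate — $c_N^{-1}$ times the expected probability that a fixed pair coalesces, restricted to $\|\widetilde V\|<\varepsilon$, converges to $1-\Xi(\{\|x\|\ge\varepsilon\})$, which as $\varepsilon\downarrow0$ tends to $c_{\textnormal{pair}}=1-\Xi(\Delta\setminus\{\mathbf 0\})$ by the definition \eqref{eq:cpair} and the ``escape of mass to $\mathbf 0$'' discussion around \eqref{eq:cpair.r1} — gives precisely $c_{\textnormal{pair}}\1_{\hat\xi\vdash\hat\eta,\check\xi=\check\eta}+c_{\textnormal{pair}}\1_{\hat\xi=\hat\eta,\check\xi\vdash\check\eta}$ in the limit. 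Adding the GLIP and small-family limits, and checking that they are disjoint in the sense that no single target $\eta\neq\xi$ receives mass from both a genuine pair merger and a nontrivial paintbox (a paintbox event that happens to effect a single pair merger contributes to the integral, and its total mass is already counted there), yields the stated formula. I expect the main obstacle to be the bookkeeping that justifies dropping all the ``coupled'' $O(c_N^2)$ terms uniformly and the careful interchange of the $N\to\infty$ and $\varepsilon\downarrow0$ limits near $\mathbf 0$ — i.e.\ showing that no mass is lost or double-counted at the boundary between the multiple-merger regime and the Kingman regime; the rest is a direct application of Remark~\ref{Rk:Xi'vsXi}, Lemma~\ref{lem:largerep}, and the structural observations already recorded before Lemma~\ref{lem:annealedlimitgenerator}.
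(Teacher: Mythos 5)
Your overall architecture (condition on the offspring matrix $V$, split into GLIP and small-family regimes, apply the vague convergence of $c_N^{-1}\cL(\widetilde V)$, recover $c_{\mathsf{pair}}^{}$ from the mass escaping to $\mathbf{0}$) matches the spirit of the paper's argument, and your identification of the two pair-merger terms and the disjointness of the two regimes is correct. However, your very first step contains the essential gap.

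You assert
\begin{equation*}
\widetilde\pi^N_{\textnormal{pure}}(\xi,\eta)
= \E^{(N)}\!\left[\,\widetilde\pi^{N}(V;\hat\xi,\hat\eta)\,\widetilde\pi^{N}(V;\check\xi,\check\eta)\,\right] + O(c_N^2),
\end{equation*}
justifying it by the conditional independence of the two copies. But the two copies are conditionally independent given the \emph{full pedigree} $\cG^{(N)}$, not given $V$ alone: they share the random balls-in-boxes assignment of children to parent pairs and the parental-role coin flips. Conditional on $V$, the error in your display is the conditional covariance of the two (pedigree-measurable) transition probabilities, and controlling it is exactly the hard part of the lemma, not an $O(c_N^2)$ afterthought. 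In a GLIP both conditional transition probabilities are $\Theta(1)$, so one must actually show the covariance there is $O(1/N)$; and in ``moderately large family'' generations (e.g.\ $V_{(1)}$ of order $N^{\beta}$ with $\beta<1$) the two copies' coalescence events are genuinely positively correlated through the shared family, at an order that is \emph{not} $O(c_N^2)$ in general --- this contribution is only killed in the limit because it sits inside $\int_{\|x\|<\varepsilon} p(x;\hat\xi,\hat\eta)p(x;\check\xi,\check\eta)\langle x,x\rangle^{-1}\Xi(\dd x)=O(\varepsilon^2)$ after the $N\to\infty$ limit, and then $\varepsilon\downarrow 0$. The paper avoids the factorization claim altogether: it computes the \emph{joint} transition probability exactly via the balls-in-boxes formula with combined multiplicities $K_\ell$ (the analogue of \eqref{eq:aowie}), i.e.\ $\frac{1}{(N)_{\hat b+\check b\downarrow}}\E[\prod_\ell(\widehat V_\ell)_{K_\ell\downarrow}\mid V]\,2^{-\hat b-\check b}$, and only in the $N\to\infty$ limit does the product structure $p(x;\hat\xi,\hat\eta)\,p(x;\check\xi,\check\eta)$ emerge from the asymptotics of the joint falling-factorial moments. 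To repair your proof you would have to establish the asymptotic factorization of these joint moments, which amounts to redoing that computation; as written, you have assumed the conclusion. A secondary point: the paper also needs a consistency/recursion step to pass from the case where every block merges to the general case with non-participating blocks, which your sketch does not address, although your route via $p(\widetilde V;\xi,\eta)$ for general $\eta$ could in principle absorb it.
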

\begin{proof}
See Appendix~\ref{app:lemmaproof}.
\end{proof}

Next, we deal with the mixed term.
\begin{lemma} \label{lem:mixedrates}
For $\varepsilon > 0$, $\xi$ and $\eta$ as above and $\xi \neq \eta$, 
\begin{equation*}
\begin{split}
&\lim_{N \to \infty} \frac{1}{c_N^{}}  \widetilde \pi_{\textnormal{mix}}^{N,\varepsilon} \big ( (\hat \xi, \check \xi), (\hat \eta, \check \eta)   \big ) \\
& \quad =
\1_{\check \xi = \check \eta} \int_{\Delta \setminus \Delta_{\geqslant \varepsilon}} p(x;\hat \xi, \hat \eta)  \frac{\Xi(\dd x)}{\langle x, x \rangle} 
+
\int_{\Delta_{\geqslant \varepsilon}} p(x;\hat \xi, \hat \eta) p(x; \check \xi, \check \eta) \frac{\Xi(\dd x)}{\langle x, x \rangle} \\
& \quad \, + c_{\textnormal{pair}}^{} \1_{\hat \xi \vdash \hat \eta, \check \xi = \check \eta} 
+ c_{\textnormal{pair}}^{} \1_{\hat \xi  = \hat \eta, 
\check \xi \vdash \check \eta} \eqdef q_{\xi,\eta}^{\mathsf{mix},\varepsilon},
\end{split}
\end{equation*}
where $p$ is given in \eqref{eq:paintboxtransitionprime} and 
$c_{\textnormal{pair}}^{} = 1 - \Xi 
\big (
\Delta \setminus \{ \mathbf{0} \} 
\big )$.
\end{lemma}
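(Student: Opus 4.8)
The plan is to mirror the proof of Lemma~\ref{lem:qpure} (which is relegated to Appendix~\ref{app:lemmaproof}), adapting it to the asymmetry between the two copies introduced by the $\varepsilon$-cutoff on the naive side. Recall from Definition~\ref{def:mixedpairtransitions} that $\pi_{\mathsf{mix}}^{N,\varepsilon}\big((\hat\xi,\check\xi),(\hat\eta,\check\eta)\big) = \E^{(N)}\big[\pi^N(V;\hat\xi,\hat\eta)\,p_\varepsilon^{}(\widetilde V;\check\xi,\check\eta)\big]$, so that after summing over $\mathsf{cd}^{-1}(\hat\eta,\check\eta)$ we obtain
\begin{equation*}
\widetilde\pi_{\mathsf{mix}}^{N,\varepsilon}(\xi,\eta) = \E^{(N)}\Big[\widetilde\pi^{N}(V;\hat\xi,\hat\eta)\,p_\varepsilon^{}(\widetilde V;\check\xi,\check\eta)\Big],
\end{equation*}
where $\widetilde\pi^{N}(V;\hat\xi,\hat\eta)$ is the aggregate conditional transition probability of a single coalescent copy studied in Subsection~\ref{subsec:transitionsconditionalonoffspringnumbers} and Lemma~\ref{lem:largerep}. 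The first step is to split the expectation according to whether the generation under consideration is a GLIP, i.e.\ according to the indicator $\1_{\|\widetilde V\|\geqslant\varepsilon}$ appearing in the definition \eqref{eq:pepsilon} of $p_\varepsilon^{}$.

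On the event $\{\|\widetilde V\|\geqslant\varepsilon\}$, we use Lemma~\ref{lem:largerep} to replace \emph{both} factors by paintbox probabilities: $\widetilde\pi^N(V;\hat\xi,\hat\eta) = p(\widetilde V;\hat\xi,\hat\eta)+O(1/N)$ and, by \eqref{eq:pepsilon}, $p_\varepsilon^{}(\widetilde V;\check\xi,\check\eta) = p(\widetilde V;\check\xi,\check\eta)+O(c_N^2)$. The error terms are negligible after dividing by $c_N^{}$ because $\P^{(N)}(\|\widetilde V\|\geqslant\varepsilon)$ is itself $O(c_N^{})$ (this follows from \eqref{eq:PhiNconv} together with the fact that $\|\widetilde V\|^2 = \langle\widetilde V,\widetilde V\rangle = \tfrac12\langle x,x\rangle$ controls the mass that does not escape to $\mathbf 0$). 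Then $c_N^{-1}\,\E^{(N)}\big[p(\widetilde V;\hat\xi,\hat\eta)p(\widetilde V;\check\xi,\check\eta)\1_{\|\widetilde V\|\geqslant\varepsilon}\big]$ converges, by the vague convergence in \eqref{eq:PhiNconv2}/Remark~\ref{Rk:Xi'vsXi} and a standard bounded-continuous-test-function argument (the integrand $p(x;\hat\xi,\hat\eta)p(x;\check\xi,\check\eta)\langle x,x\rangle^{-1}$ is bounded and continuous on $\Delta_{\geqslant\varepsilon}$, using the bound $p(x;\xi',\eta')\leqslant 2^{\#\xi'}\langle x,x\rangle$ from the discussion around \eqref{eq:noncoal.psi.s<t.0} when $\eta'\neq\xi'$), to $\int_{\Delta_{\geqslant\varepsilon}} p(x;\hat\xi,\hat\eta)p(x;\check\xi,\check\eta)\,\langle x,x\rangle^{-1}\Xi(\dd x)$, which is the second integral in the claim. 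One must check the boundary $\Xi(\partial\Delta_{\geqslant\varepsilon})=0$; if it fails for particular $\varepsilon$, restrict to a co-countable set of good $\varepsilon$, which suffices since Lemma~\ref{lem:l2approximation} only needs the $\varepsilon\to0$ limit.

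On the complementary event $\{\|\widetilde V\|<\varepsilon\}$, the factor $p_\varepsilon^{}(\widetilde V;\check\xi,\check\eta)$ equals $c_N^{}c_{\mathsf{pair}}^{}\1_{\check\xi\vdash\check\eta}+O(c_N^2)$, so this branch contributes, after dividing by $c_N^{}$, only to transitions where $\check\xi=\check\eta$ (paintbox mergers on the hat-side with no merger on the check-side) plus to pure pair mergers. Concretely, the term $\E^{(N)}\big[\widetilde\pi^N(V;\hat\xi,\hat\eta)\1_{\check\xi=\check\eta}\1_{\|\widetilde V\|<\varepsilon}\big]/c_N^{}$ splits further into the small-family part that produces the Kingman pair rate $c_{\mathsf{pair}}^{}\1_{\hat\xi\vdash\hat\eta}$ — this is exactly where we import the identity \eqref{eq:cpair.r1}/\eqref{eq:cpair} and the argument of Lemma~\ref{lem:qpure} for the absence of large progeny — and the non-GLIP-but-below-$\varepsilon$ paintbox part giving $\int_{\Delta\setminus\Delta_{\geqslant\varepsilon}} p(x;\hat\xi,\hat\eta)\,\langle x,x\rangle^{-1}\Xi(\dd x)$; the $c_N^{}c_{\mathsf{pair}}^{}\1_{\check\xi\vdash\check\eta}$ piece, multiplied by $\widetilde\pi^N(V;\hat\xi,\hat\eta)=\1_{\hat\xi=\hat\eta}+O(c_N^{})$, yields $c_{\mathsf{pair}}^{}\1_{\hat\xi=\hat\eta,\,\check\xi\vdash\check\eta}$. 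Assembling the four pieces gives precisely $q_{\xi,\eta}^{\mathsf{mix},\varepsilon}$.

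The main obstacle, as in Lemma~\ref{lem:qpure}, is bookkeeping the contribution of the small families rigorously: one has to show that the part of $\widetilde\pi^N(V;\hat\xi,\hat\eta)$ coming from generations with $\|\widetilde V\|<\varepsilon$ contributes, in the limit, exactly the Kingman pair-coalescence rate $c_{\mathsf{pair}}^{}$ on pairs of blocks and nothing else (no spurious simultaneous mergers), and that the cross term between a below-$\varepsilon$ paintbox event on the hat-side and a pair merger on the check-side vanishes at order $c_N^{}$. This requires the uniform control of $\pi^N(V;\hat\xi,\cdot)$ in $V$ and the moment estimates on $(\widehat V_i)$ developed for the single-copy case; since the hat-side factor here is literally the same object $\widetilde\pi^N$ already analysed, and the check-side factor $p_\varepsilon^{}$ is explicit, the computation is a routine — if tedious — extension of the appendix proof of Lemma~\ref{lem:qpure}, and we defer the details to Appendix~\ref{app:lemmaproof}.

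\begin{proof}[Proof of Lemma~\ref{lem:mixedrates}]
See Appendix~\ref{app:lemmaproof}; the argument follows the proof of Lemma~\ref{lem:qpure}, splitting the expectation in Definition~\ref{def:mixedpairtransitions} (aggregated over $\mathsf{cd}^{-1}(\eta)$) according to the indicator $\1_{\|\widetilde V\|\geqslant\varepsilon}$ and using Lemma~\ref{lem:largerep}, the convergence \eqref{eq:PhiNconv2}, and the identity \eqref{eq:cpair.r1}.
\end{proof}
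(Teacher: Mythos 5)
Your proposal is correct and follows essentially the same route as the paper's proof: the same decomposition of $\widetilde\pi_{\mathsf{mix}}^{N,\varepsilon}$ according to $\1_{\|\widetilde V\|\geqslant\varepsilon}$, Lemma~\ref{lem:largerep} plus the vague convergence of $c_N^{-1}\mathcal{L}(\widetilde V)$ for the GLIP part, and the explicit form of $p_\varepsilon^{}$ together with the known single-copy limit from \cite{BirknerEtAl2018} to extract the two $c_{\mathsf{pair}}^{}$ terms and the below-$\varepsilon$ integral (the paper obtains the latter by subtracting the $\geqslant\varepsilon$ contribution from the full single-copy rate, which is the rigorous form of the split you describe). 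The caveat about $\Xi(\partial\Delta_{\geqslant\varepsilon})=0$ holding outside a discrete set of $\varepsilon$ also matches the paper.
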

\begin{proof}
Recall the definitions of $\pi_{\mathsf{mix}}^{N,\varepsilon}$
(Definition~\ref{def:mixedpairtransitions}) and
$\widetilde \pi_{\mathsf{mix}}^{N,\varepsilon}$. We will decompose 
$\widetilde \pi_{\mathsf{mix}}^{N,\varepsilon}$
to account for the contributions of generations with low and high individual progeny, respectively, corresponding to the two integrals on the right-hand side of the claim. We write 
$\widetilde 
\pi_{\mathsf{mix}}^{N,\varepsilon} = 
\widetilde
\pi_{\mathsf{mix},<}^{N,\varepsilon}
+
\widetilde
\pi_{\mathsf{mix},\geqslant}^{N,\varepsilon}
$
with
\begin{equation*}
\widetilde
\pi_{\mathsf{mix},<}^{N,\varepsilon} \big ( (\hat \xi, \check \xi),
(\hat \eta, \check \eta)  \big )
\defeq 
\E^{(N)} \Big [ \1_{\| \widetilde V \| < \varepsilon }
\widetilde
\pi^N (V; \hat \xi, \hat \eta )
p_\varepsilon^{} (\widetilde V; \check \xi, \check \eta)
\Big ]
\end{equation*}
and
\begin{equation*}
\widetilde \pi_{\mathsf{mix},\geqslant}^{N,\varepsilon} \big ( (
\hat \xi, \check \xi),
(\hat \eta, \check \eta)  \big )
\defeq 
\E^{(N)} \Big [ \1_{\| \widetilde V \| \geqslant \varepsilon }
\widetilde
\pi^N (V; \hat \xi, \hat \eta )
p_\varepsilon^{} (\widetilde V; \check \xi, \check \eta)
\Big ],
\end{equation*}
where
$
\widetilde \pi^N (V;\hat \xi, \hat \eta)
$
is the aggregated transition probability (conditional on the total offspring numbers) for a \emph{single} coalescent that was given in \eqref{eq:aggregatedtransprobs}.
Recall the definition of
$p_\varepsilon^{}$ from \eqref{eq:pepsilon}.
For $\| \widetilde V \| < \varepsilon$ (and $\check \xi \neq \check \eta$), 
$p_\varepsilon^{}
\big ( \widetilde V ; \check \xi, \check \eta \big ) =
\1_{\check \xi \vdash \check \eta} c_N^{} c_{\mathsf{pair}}^{}
+ O(c_N^2)
$ so by summation, with \eqref{eq:transitionsconditionedonnumbers},
\eqref{eq:transitionsconditionedonnumbersaveraged} and
\eqref{eq:transitionsconditionedonnumbersaggregated}, and writing 
$\widetilde \pi^{N}
\defeq
\E^{(N)} \big [ \widetilde \pi^N (V;\cdot,\cdot) \big ]$
for the annealed transition matrix of a single coalescent, we get, for $\check \xi \neq \check \eta$,
\begin{equation*}
\begin{split}
& 0 \leqslant \widetilde  \pi_{\mathsf{mix},<}^{N,\varepsilon} \big ( (\hat \xi, \check \xi),
(\hat \eta, \check \eta)  \big ) \\
& \quad = 
\1_{\check \xi \vdash \check \eta} c_N^{} c_{\mathsf{pair}}^{}
\E^{(N)} \Big [ \1_{\| \widetilde V \| < \varepsilon }
\widetilde \pi^{N} (V; \hat \xi, \hat \eta )
\Big ] + O(c_N^2) \\
& \quad \leqslant \1_{\check \xi \vdash \check \eta} c_N^{} c_{\mathsf{pair}}^{} 
\widetilde \pi^N (\hat \xi, \hat \eta) + O(c_N^2).
\end{split}
\end{equation*}
Note that 
$\widetilde \pi^N \big ( \hat \xi, \hat \eta  \big )$
goes to $\1_{\hat \xi = \hat \eta}$ as
$N \to \infty$.
Thus, dividing by $c_N^{}$ and letting $N \to \infty$ gives us
\begin{equation*}
\lim_{N \to \infty} \frac{1}{c_N^{}} 
\widetilde  \pi_{\mathsf{mix},<}^{N,\varepsilon} \big ( (\hat \xi, \check \xi),
(\hat \eta, \check \eta)  \big )
=
c_{\mathsf{pair}}^{} \1_{\hat \xi = \hat \eta, \, \check \xi \vdash \check \eta }
\quad
\textnormal{ for } \check \xi \neq \check \eta.
\end{equation*}
On the other hand,
when 
$\check \xi = \check \eta$
(and hence 
$\hat \xi \neq \hat \eta$
by our assumption that $\xi \neq \eta$),
we use the fact that 
$p_\varepsilon^{}(\widetilde V;\cdot,\cdot)$ 
is a transition probability whence
$p_\varepsilon^{}(\widetilde V; \check \xi, \check \xi) = 1 - \sum_{\check \eta \neq \check \xi} p_\varepsilon^{} 
(\widetilde V; \check \xi, \check \eta)$. Thus, by what we have just shown, we have
\begin{equation*}
\lim_{N \to \infty} 
\frac{1}{c_N^{}} 
\widetilde  \pi_{\mathsf{mix},<}^{N,\varepsilon} \big ( (\hat \xi, 
\check \xi),
(\hat \eta, \check \eta)  \big )
=
\lim_{N \to \infty} 
\frac{1}{c_N^{}} 
\widetilde \pi^{N} (\hat \xi, \hat \eta) -
\lim_{N \to \infty} 
\frac{1}{c_N^{}} 
\E^{(N)} \Big [ \1_{\| \widetilde V \| \geqslant \varepsilon }
\big ( p(\widetilde V; \hat \xi, \hat \eta ) + O(N^{-1}) \big ) \Big ].
\end{equation*}
As was shown in~\cite[Eq.~(1.7) and Theorem A.5]{BirknerEtAl2018},
the first limit evaluates to
\begin{equation*}
\int_{\Delta \setminus \{ { \bf 0 } \} } p(x;\hat \xi, \hat \eta)  \frac{\Xi(\dd x)}{\langle x, x \rangle} + 
c_{\mathsf{pair}}^{} \1_{\hat \xi \vdash \hat \eta}.
\end{equation*}
By the vague convergence assumption for the law of $\widetilde V$ (see Remark~\ref{Rk:Xi'vsXi}) and the theorem of dominated convergence, the second limit becomes (except maybe for some discrete set of $\varepsilon$)
\begin{equation*}
\int_{\Delta_{\geqslant \varepsilon}} p(x;\hat \xi,\hat \eta)
\frac{\Xi(\dd x)}{\langle x,  x \rangle }.
\end{equation*}
Combining the two cases $\check \xi = \check \eta$ and 
$\check \xi \neq \check \eta$, we arrive at
\begin{equation*}
\lim_{N \to \infty} \frac{1}{c_N^{}} 
\widetilde  \pi_{\mathsf{mix},<}^{N,n,\varepsilon} \big ( (\hat \xi, \check \xi),
(\hat \eta, \check \eta)  \big )
=
c_{\mathsf{pair}}^{} \1_{\hat \xi= \hat \eta, \check \xi \vdash \check \eta} +
c_{\mathsf{pair}}^{} \1_{\hat \xi \vdash \hat \eta, \check \xi = \check \eta} + 
\1_{\check \xi = \check \eta} \int_{\Delta \setminus \Delta_{ \geqslant \varepsilon}} p(x;\hat \xi,\hat \eta)
\frac{\Xi(\dd x)}{\langle x,  x \rangle },
\end{equation*}
while Lemma~\ref{lem:largerep}, the vague convergence for $\widetilde V$ and the theorem of dominated convergence immediately yield
\begin{equation*}
\lim_{N \to \infty} \frac{1}{c_N^{}} 
\widetilde 
\pi_{\mathsf{mix},\geqslant}^{N,n,\varepsilon} \big ( (\hat \xi, \check \xi),
(\hat \eta, \check \eta)  \big )
=
\int_{\Delta_{\geqslant \varepsilon}} 
p(x;\hat \xi,\hat \eta) 
p(x;\check \xi,\check \eta)
\frac{\Xi(\dd x)}{\langle x,x \rangle}.
\end{equation*}
\end{proof}

Finally, we deal with two coupled naive coalescents.
\begin{lemma} \label{lem:qpureepsilon}
For $\varepsilon > 0$, $\xi$ and $\eta$ as above and $\xi \neq \eta$, 
\begin{equation*}
\begin{split}
&\lim_{N \to \infty} \frac{1}{c_N^{}}  \widetilde \pi_{\textnormal{pure}}^{N,\varepsilon} \big ( (\hat \xi, \check \xi), (\hat \eta, \check \eta)   \big ) \\
& \quad =
\int_{\Delta_{\geqslant \varepsilon}} p(x;\hat \xi, \hat \eta) p(x;\check \xi,\check \eta)  \frac{\Xi(\dd x)}{\langle x, x \rangle} 
 + c_{\textnormal{pair}}^{} \1_{\hat \xi \vdash \hat \eta, \check \xi = \check \eta} 
+ c_{\textnormal{pair}}^{} \1_{\hat \xi  = \hat \eta, \check \xi \vdash \check \eta} \eqdef q_{\xi,\eta}^{\mathsf{pure},\varepsilon},
\end{split}
\end{equation*}
where $p$ is given in \eqref{eq:paintboxtransitionprime} and 
$c_{\textnormal{pair}}^{} = 1 - \Xi 
\big (
\Delta \setminus \{ \mathbf{0} \} 
\big )$.
\end{lemma}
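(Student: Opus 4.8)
The plan is to follow the scheme of the proof of Lemma~\ref{lem:mixedrates}, which here simplifies considerably because both copies of the $\varepsilon$-naive coalescent are entirely explicit. By Definition~\ref{def:inhomcoalescentfullygeneral} the two copies $\hat\Pi^{(N,\varepsilon)}$ and $\check\Pi^{(N,\varepsilon)}$ share the pedigree --- and hence the same sequence $(\widetilde V^{(g)})_{g\in\N}$ of rescaled ordered offspring frequencies and the same cutoff point process $\Psi^{(N)}_\varepsilon$ --- but are driven by \emph{independent} families $\cC_{\Psi^{(N)}_\varepsilon}$ and $\cK_{\gamma_N^{}}$. Consequently, conditional on the offspring matrix $V$ of the relevant generation the two coordinates move independently with marginal transition kernel $p_\varepsilon^{}(\widetilde V;\cdot,\cdot)$ from \eqref{eq:pepsilon}, and since the pair takes values in $\cE_n\times\cE_n$, a set of completely dispersed states on which $\mathsf{cd}$ is the identity, one has $\widetilde\pi^{N,\varepsilon}_{\textnormal{pure}}(\xi,\eta)=\pi^{N,\varepsilon}_{\textnormal{pure}}(\xi,\eta)=\E^{(N)}\big[p_\varepsilon^{}(\widetilde V;\hat\xi,\hat\eta)\,p_\varepsilon^{}(\widetilde V;\check\xi,\check\eta)\big]$ for $\eta\in\cE_n\times\cE_n$, with $\widetilde V$ as in \eqref{eq:discretepaintbox} (this is Definition~\ref{def:purelynaivetransitions}).

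First I would split this expectation according to whether $\|\widetilde V\|<\varepsilon$ or $\|\widetilde V\|\geqslant\varepsilon$. On $\{\|\widetilde V\|<\varepsilon\}$, \eqref{eq:pepsilon} gives $p_\varepsilon^{}(\widetilde V;\zeta,\zeta')=\1_{\zeta\vdash\zeta'}c_N^{}c_{\textnormal{pair}}^{}+O(c_N^2)$ for $\zeta\neq\zeta'$ (uniformly in $\widetilde V$) and $p_\varepsilon^{}(\widetilde V;\zeta,\zeta)=1+O(c_N^{})$, so the product $p_\varepsilon^{}(\widetilde V;\hat\xi,\hat\eta)\,p_\varepsilon^{}(\widetilde V;\check\xi,\check\eta)$ is $O(c_N^2)$ when both coordinates move and equals $\big(\1_{\hat\xi\vdash\hat\eta}\1_{\check\xi=\check\eta}+\1_{\hat\xi=\hat\eta}\1_{\check\xi\vdash\check\eta}\big)c_N^{}c_{\textnormal{pair}}^{}+O(c_N^2)$ otherwise; multiplying by $\P^{(N)}(\|\widetilde V\|<\varepsilon)\to1$, dividing by $c_N^{}$ and letting $N\to\infty$ produces the two indicator terms in the claim. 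On $\{\|\widetilde V\|\geqslant\varepsilon\}$, \eqref{eq:pepsilon} gives $p_\varepsilon^{}(\widetilde V;\zeta,\zeta')=p(\widetilde V;\zeta,\zeta')+O(c_N^2)$ uniformly, so --- using here that both copies perform the \emph{same} $\widetilde V$-merger, but with independent paintbox draws, which is why only a single argument $x$ ultimately enters --- the product equals $p(\widetilde V;\hat\xi,\hat\eta)\,p(\widetilde V;\check\xi,\check\eta)+O(c_N^2)$. Since $\xi\neq\eta$, at least one of $\hat\xi\neq\hat\eta$ or $\check\xi\neq\check\eta$ holds; recalling that $p(x;\zeta,\zeta')\leqslant2^{n}\langle x,x\rangle$ for $\zeta\neq\zeta'$ and $p\leqslant1$ in general (see the discussion around \eqref{eq:paintboxtransitionprime}), the function $x\mapsto p(x;\hat\xi,\hat\eta)p(x;\check\xi,\check\eta)/\langle x,x\rangle$ is bounded and continuous on $\Delta_{\geqslant\varepsilon}$. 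Writing
\[
\frac{1}{c_N^{}}\,\E^{(N)}\Big[\1_{\|\widetilde V\|\geqslant\varepsilon}\,p(\widetilde V;\hat\xi,\hat\eta)\,p(\widetilde V;\check\xi,\check\eta)\Big]=\int_{\Delta_{\geqslant\varepsilon}}p(x;\hat\xi,\hat\eta)\,p(x;\check\xi,\check\eta)\,\frac{1}{c_N^{}}\,\cL\big(\widetilde V\big)(\dd x),
\]
the vague convergence $c_N^{-1}\cL(\widetilde V)(\dd x)\to\langle x,x\rangle^{-1}\Xi(\dd x)$ from Remark~\ref{Rk:Xi'vsXi} then yields $\int_{\Delta_{\geqslant\varepsilon}}p(x;\hat\xi,\hat\eta)\,p(x;\check\xi,\check\eta)\,\Xi(\dd x)/\langle x,x\rangle$, valid for all but the (at most countable) set of $\varepsilon$ with $\Xi(\partial\Delta_{\geqslant\varepsilon})\neq0$. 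Adding the two contributions and discarding the $O(c_N^2)/c_N^{}=O(c_N^{})$ remainder gives $q^{\mathsf{pure},\varepsilon}_{\xi,\eta}$ as stated.

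The computation is routine once the product representation of $\pi^{N,\varepsilon}_{\textnormal{pure}}$ is established; the only points needing a little care --- and handled exactly as in Lemma~\ref{lem:mixedrates} --- are the uniformity in $\widetilde V$ of the $O(c_N^2)$ errors, the restriction to $\varepsilon$ with $\Xi(\partial\Delta_{\geqslant\varepsilon})=0$, and the bookkeeping of which coordinate stays put, which is precisely what separates the two $c_{\textnormal{pair}}^{}$-indicator terms from the single mixed integral. In contrast to Lemma~\ref{lem:qpure} and Lemma~\ref{lem:mixedrates}, no semi-explicit genealogical transition probabilities $\widetilde\pi^{N}(V;\cdot,\cdot)$ enter, so I expect this to be the easiest of the three rate computations rather than the main obstacle.
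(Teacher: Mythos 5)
Your proposal is correct and follows essentially the same route as the paper's proof: starting from the product representation $\widetilde\pi^{N,\varepsilon}_{\textnormal{pure}}=\E^{(N)}[p_\varepsilon^{}(\widetilde V;\hat\xi,\hat\eta)\,p_\varepsilon^{}(\widetilde V;\check\xi,\check\eta)]$, splitting on $\{\|\widetilde V\|\geqslant\varepsilon\}$ versus $\{\|\widetilde V\|<\varepsilon\}$, using vague convergence of $c_N^{-1}\cL(\widetilde V)$ for the former and the explicit form \eqref{eq:pepsilon} for the latter. Your added remarks on uniformity of the error terms and on the continuity points of $\Xi(\partial\Delta_{\geqslant\varepsilon})$ are consistent with (and slightly more careful than) the paper's presentation.
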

\begin{proof}
Recall from Definition~\ref{def:purelynaivetransitions} that
\begin{equation*}
\widetilde \pi_{\textnormal{pure}}^{N,\varepsilon} 
\big ( (\hat \xi,\hat \eta), 
(\check \xi,\check \eta) \big )
=
\pi_{\textnormal{pure}}^{N,\varepsilon} 
\big ( (\hat \xi,\hat \eta), 
(\check \xi,\check \eta) \big )
=
\E \Big [
p_\varepsilon^{} ( \widetilde V; \hat \xi, \hat \eta) 
p_\varepsilon^{} (\widetilde V; \check \xi, \check \eta )
\Big ];
\end{equation*}
we may drop the tilde because the couple of naive coalescents is defined on $\cE_n \times \cE_n$ to begin with.  We split
\begin{equation*}
\E \Big [
p_\varepsilon^{} ( \widetilde V; \hat \xi, \hat \eta) 
p_\varepsilon^{} (\widetilde V; \check \xi, \check \eta )
\Big ]
=
\E \Big [ \1_{\| \widetilde V \| \geqslant \varepsilon}
p_\varepsilon^{} ( \widetilde V; \hat \xi, \hat \eta) 
p_\varepsilon^{} (\widetilde V; \check \xi, \check \eta )
\Big ]
+
\E \Big [ \1_{\| \widetilde V \| < \varepsilon}
p_\varepsilon^{} ( \widetilde V; \hat \xi, \hat \eta) 
p_\varepsilon^{} (\widetilde V; \check \xi, \check \eta )
\Big ].
\end{equation*}
Dividing by $c_N^{}$ and letting $N \to \infty$, the first expectation converges to
\begin{equation*}
\int_{\Delta_{\geqslant \varepsilon}} p(x;\hat \xi, \hat \eta) p(x;\check \xi,\check \eta)  \frac{\Xi(\dd x)}{\langle x, x \rangle} 
\end{equation*}
by the vague convergence of $c_N^{-1} \cL(\widetilde V)$ (and dominated convergence).
To compute the limit of the second expectation, recall that for
$\| \widetilde V \| < \varepsilon$
and
$\cdot \in \{\wedge, \vee  \}$,
\begin{equation*}
p_{\varepsilon}^{} (\widetilde V; \dot \xi,\dot \eta) 
=
c_N^{} c_{\mathsf{pair}}^{} \1_{\dot \xi \vdash \dot \eta} + 
 \big (1 - O(c_N^{})\big ) \1_{\dot \xi = \dot  \eta } + O(c_N^2).
\end{equation*}
Therefore,
\begin{equation*}
\lim_{N \to \infty} \frac{1}{c_N^{}}
\E \Big [ \1_{\| \widetilde V \| < \varepsilon}
p_\varepsilon^{} ( \widetilde V; \hat \xi, \hat \eta) 
p_\varepsilon^{} (\widetilde V; \check \xi, \check \eta )
\big ] = 
c_{\textnormal{pair}}^{} \1_{\hat \xi \vdash \hat \eta, \check \xi = \check \eta} 
+ c_{\textnormal{pair}}^{} \1_{\hat \xi  = \hat \eta, \check \xi \vdash \check \eta}.
\end{equation*}
\end{proof}

We finally have everything in place to prove Lemma~\ref{lem:l2approximation}.

\begin{proof}[Proof of Lemma~\ref{lem:l2approximation}]

By the preliminary discussion at the beginning of this section, we have
\begin{equation*}
\begin{split}
& \E^{(N)} \Big [
\Big (
\P^{(N)} \big ( \Pi^{N}_{\widetilde t_1^{}} =  \xi_1^{}, \ldots, 
\Pi^{N}_{\widetilde t_k^{}} = \xi_k^{} \, \big | \, \cA^{(N)} \big ) \\
& \quad -
\P^{(N)} \big ( \Pi_{t_1^{}}^{(N, \varepsilon)} = \xi_1^{},
\ldots,
\Pi_{t_k^{}}^{(N,\varepsilon)} = \xi_k^{}
         \, \big | \, \Psi^{(N)}  \big )
\Big )^2 
\Big ] \\
& \quad = 
\P^{(N)} \Big ( \big ( \hat \Pi^N_{ t_i^{}},
\check \Pi^N_{t_i^{}} \big ) = \big(\xi_i^{}, \xi_i^{}
\big )  \textnormal{ for all } i = 1, \ldots, k   \Big ) \\
& \quad - 2 
\P^{(N)} \Big ( \big ( \hat \Pi^{(N,\varepsilon)}_{t_i^{}},
 \check \Pi^N_{ t_i^{}} \big )^ = \big(\xi_i^{}, \xi_i^{}
\big )  \textnormal{ for all } i = 1, \ldots, k   \Big ) \\
& \quad +
\P^{(N)} \Big ( \big ( \hat \Pi^{(N,\varepsilon)}_{t_i^{}},
  \check \Pi^{(N,\varepsilon)}_{t_i^{}} \big ) = \big(\xi_i^{}, \xi_i^{}
\big )  \textnormal{ for all } i = 1, \ldots, k   \Big ).
\end{split}
\end{equation*}
recalling that we write 
$\widetilde t = \lfloor t / c_N^{} \rfloor$ for the rescaled time.
By our computations in this subsection and 
Lemma~\ref{lem:annealedlimitgenerator}, the right hand converges as $N \to \infty$
to
\begin{equation} \label{eq:80r}
\begin{split}
& {\bf P} \big ( 
\Pi^{\infty,\mathsf{pure}}_{t_i^{}} = \xi_i^{} 
\textnormal{ for all } i = 1,\ldots,k
\big ) \\
& \quad -2
{\bf P} \big ( 
\Pi^{\infty,\mathsf{mix},\varepsilon}_{t_i^{}} = \xi_i^{} 
\textnormal{ for all } i = 1,\ldots,k
\big ) \\
& \quad +
{\bf P} \big ( 
\Pi^{\infty,\mathsf{pure},\varepsilon}_{t_i^{}} = \xi_i^{} 
\textnormal{ for all } i = 1,\ldots,k
\big )
\end{split},
\end{equation}
where $\Pi^{\infty,\mathsf{pure}}$, $\Pi^{\infty,\mathsf{mix},\varepsilon}$ and $\Pi^{\infty,\mathsf{pure},\varepsilon}$ are 
continuous-time Markov chains with transition rates 
$q_{\xi,\eta}^{\mathsf{pure}}$, $q_{\xi,\eta}^{\mathsf{mix},\varepsilon}$ and 
$q_{\xi,\eta}^{\mathsf{pure},\varepsilon}$, respectively.
By Lemmas \ref{lem:qpure}, \ref{lem:mixedrates} and \ref{lem:qpureepsilon}, 
\begin{equation*}
\lim_{\varepsilon \to 0} q_{\xi,\eta}^{\mathsf{mix},\varepsilon} 
=
\lim_{\varepsilon \to 0} q_{\xi,\eta}^{\mathsf{pure},\varepsilon}
=
q_{\xi,\eta}^{\mathsf{pure }},
\end{equation*}
which implies that the \eqref{eq:80r} vanishes for
$\varepsilon \to 0$.
\end{proof}

\section{Examples}\label{sec:examples} 

In this section, we apply our main theorem, Theorem \ref{thm:fddconvergence}, to various
examples of the diploid Cannings model. These include the examples in \citet[Section 2]{BirknerEtAl2018} where the corresponding annealed limits ($n$-$\Xi$-coalescent processes) as $N\to\infty$ were computed. We refer the reader to \citet[Section 2]{BirknerEtAl2018} for details and motivation of these examples. 

\subsection{Diploid Wright-Fisher model} 
\label{ex:diploidWF}

Our first example is the diploid Wright-Fisher model with no selfing,  where each individual is independently assigned two distinct parents uniformly at random. 
The offspring numbers therefore follow a multinomial distribution with uniform weights:
\begin{equation*}
  \left(V_{i,j}\right)_{1 \leqslant i < j \leqslant N} \mathop{=}^d {\rm Multinomial}\left(N;\;\frac{2}{N(N-1)}, \cdots, \frac{2}{N(N-1)}\right).
\end{equation*}
This model is a special case considered in \cite{Mohle1998a} when the selfing rate is zero.
In this case, $c_N = 1/(2N)$ for all $N\geqslant 1$, so \eqref{eq:limcN=0} holds. Furthermore, \eqref{eq:PhiNconv} and Theorem \ref{thm:fddconvergence} hold with $\Xi'=\Xi=\delta_0$ and
$c_{\textnormal{pair}}^{} = 1$. That is, the quenched law is the Kingman coalescent. This special case of Theorem \ref{thm:fddconvergence} has been proved in~\cite{TyukinThesis2015} as mentioned in Remark \ref{rmk:tyukin}.

\subsection{Diploid population model with random individual fitness}
\label{ex:diploidind}

This and the example in the next section include diploid versions of the (haploid) model of \cite{schweinsberg2003coalescent}.  These can produce two distinct forms of ``diploid Beta coalescents'' as $N\to\infty$.

Let $(W_i)_{1 \leqslant i \leqslant N}$ be i.i.d. copies of a nonnegative random variable $W$ with $\E[W]>0$.
Suppose that, given 
$(W_i)_{1 \leqslant i \leqslant N}$, the offspring numbers follow the following  multinomial distribution:
\begin{equation*}
  \left(V_{i,j}\right)_{1 \leqslant i < j \leqslant N} \mathop{=}^d {\rm Multinomial}\left(N;\;\frac{W_1W_2}{Z_N}, \frac{W_1W_3}{Z_N} \cdots, \frac{W_{N-1}W_N}{Z_N}\right),
\end{equation*}
whenever $Z_N:=\sum_{1\leqslant i<j\leqslant N}W_iW_j>0$. Section 2.1 of \cite{BirknerEtAl2018} considers separately the finite variance case and the case of (strictly) regularly varying tail.

In the first case where it is assumed that $\E[W^2]<\infty$, $c_N$ is of order $N^{-1}$ and  Theorem \ref{thm:fddconvergence} holds with $\Xi'=\Xi=\delta_0$  and
$c_{\textnormal{pair}}^{} = 1$.  Here, as for the model in Section~\ref{ex:diploidWF}, the quenched law is the Kingman coalescent.

In the second case, it is  assumed that $\P(W\geqslant z)\sim c_{W}\,z^{-\alpha}$ as $z\to\infty$, where $c_{W}\in(0,\infty)$ and $\alpha\in(1,2)$ are constants.
This model is a diploid version of that in \cite{schweinsberg2003coalescent}.
In this case,  $c_N$ is of order $N^{1-\alpha}$ and  Theorem \ref{thm:fddconvergence} holds with 
\begin{equation}\label{2foldBeta}
\Xi= \int_{[0,1]} \delta_{(\frac{z}{4},\frac{z}{4},0,0,0,\cdots)}\,{\rm Beta}(2-\alpha,\alpha)(\dd z),
\end{equation}
where the ${\rm Beta}(2-\alpha,\alpha)(\dd z)$ measure has density
\[
\frac{1}{\Gamma(2-\alpha)\Gamma(\alpha)}z^{1-\alpha}(1-z)^{\alpha-1}, \quad z\in (0,1).
\]
Therefore, $c_{\textnormal{pair}}^{} =0$ and hence there is no additional Kingman part in the quenched limit, as described in Definition~\ref{def:inhomcoalescentpreliminary} and Definition~\ref{def:coalescents}. The quenched limit in Theorem \ref{thm:fddconvergence} is determined solely by the large paintbox-merging events associated with a Poisson point process $\Psi$ on 
$[0,\infty) \times ( \Delta \setminus \{ \mathbf{0} \}  )$ with intensity
$\dd t \, \frac{1}{\langle x,x \rangle} \Xi (\dd x)$. Note that  $\Psi\in \cN_{\mathsf{loc}}^2$ almost surely, so the inhomogeneous $(\Psi,c_{\textnormal{pair}}^{})$-coalescent described in Definition~\ref{def:coalescents}
is well-defined. Also, note that since all atoms of $\Psi$ are of the form $\delta_{(z/4,z/4,0,0,0,\ldots)}$, $\Psi$ could alternatively be represented as a Poisson point process on
$[0,\infty) \times [0,1]$ with intensity 
$\dd t  {\rm Beta}(2-\alpha,\alpha)(\dd z)$. 

\subsection{Supercritical Galton-Watson process for parent couples}
\label{ex:GWforcouples}

In this example, we consider parent \textit{couples} that may produce a large family, unlike the previous example in which \textit{individual} parents may have many offspring due 
to unusually large fitness. This will lead to a different type of Beta-coalescent as $N\to\infty$, in which $\delta_{(\frac{z}{4},\frac{z}{4},0,0,0,\cdots)}$ in \eqref{2foldBeta} would change to $\delta_{(\frac{z}{4},\frac{z}{4},\frac{z}{4},\frac{z}{4},0,0,0,\cdots)}$. This change
corresponds to a change of the large paintbox-merging events, from 2 chromosomes of a single parent with many offspring to 4 chromosomes of a parent couple that produces a large family. Also in this example, $\Psi$ is equivalent to a Poisson point process on $[0,\infty) \times [0,1]$. 

To be more precise, let $X^{(N)}_{i,j}, 1\leqslant i\leqslant j\leqslant N$, be the ``potential offspring" of parents $i$ and $j$. Suppose $X^{(N)}_{i,j}=X^{(N)}_{j,i}$ and $X^{(N)}_{i,i}=0$ (no selfing). The  next generation is formed by sampling $N$ of all potential offspring at random without replacement (whenever $\sum_{i<j}X^{(N)}_{i,j}\geqslant N$). Then, we let $V_{i,j}$ be the number of offspring of parents $i$ and $j$ among that sample.

Assume that $(X^{(N)}_{i,j})_{1\leqslant i\leqslant j\leqslant N}$ are i.i.d. with $\P(X^{(N)}_{i,j}>0)\sim c\,N^{-1}$ as $N\to\infty$, where $c\in(0,\infty)$ is a constant. Assume also that the conditional law
$\mathcal{L}(X^{(N)}_{i,j}\,|\, X^{(N)}_{i,j}>0)$ is equal to the law $\mathcal{L}(X)$ of a (nonnegative) random  variable $X$ that does not depend on $N$ and satisfies $\E[X]\in (2/c,\,\infty)$.

Then results analogous to those of the previous example hold. Namely, if $\E[X^2]<\infty$, then 
 $c_N$ is of order $N^{-1}$ and  Theorem \ref{thm:fddconvergence} holds with $\Xi'=\Xi=\delta_0$  and
$c_{\textnormal{pair}}^{} = 1$, i.e. the quenched law is the Kingman coalescent.
Suppose, instead, $\P(X \geqslant k)\sim c_{X}\,k^{-\alpha}$ as $k\to\infty$, where $c_{X}\in(0,\infty)$ and $\alpha\in(1,2)$ are constants. 
This model is another diploid version of that in \cite{schweinsberg2003coalescent}.
In this case, $c_N^{}$ is of order $N^{1-\alpha}$ and  Theorem \ref{thm:fddconvergence} holds with 
\begin{equation}\label{4foldBeta}
\Xi= \int_{[0,1]} \delta_{(\frac{z}{4},\frac{z}{4},\frac{z}{4},\frac{z}{4},0,0,0,\cdots)}\,{\rm Beta}(2-\alpha,\alpha)(\dd z).
\end{equation}

\subsection{Occasional large families of a couple} 
\label{ex:HRcouple}

Consider a diploid population that involves rare but large reproduction events due to a single pair of distinct individuals that have a large number of offspring, such as the model from~\cite{BirknerEtAl2013a} or that from~\cite{DiamantidisEtAl2024}. 

The model from~\cite{BirknerEtAl2013a}, also discussed in \cite[Section 2.3.2]{BirknerEtAl2018}, is as follows. At each generation we randomly choose two distinct individuals
$\{I_1, I_2\}$ from $[N]$ to form a couple. This couple has a random number $\Psi_N$ of children together but with no one else, i.e. $V^{\mathsf{hr}}_{I_1,I_2}=\Psi_N$ and $V_{I_i,j}^{\mathsf{hr}}=0$ for $j\neq I_{3-i}$, $i=1,2$. The other $N-2$ individuals $[N]\setminus \{I_1,I_2\}$ in the parent generation give birth to $N-\Psi_N$ children according to the diploid Wright-Fisher model with no selfing. Formally, the offspring numbers are therefore given (for $i \neq j$) by
$V_{i,j}^{} = V_{i,j}^{\mathsf{hr}} + V_{i,j}^{\mathsf{wf}}$
where $V_{i,j}^{\mathsf{wf}}$ is distributed like $V_{i,j}^{}$ in Section~\ref{ex:diploidWF}, with $N$ replaced by $N - \Psi_N^{}$.
The limiting behavior depends on the laws 
$\big ( \mathcal{L}(\Psi_N)\big )_{N}$. 
Suppose that there exist  constants $\psi\in(0,1)$ and $\gamma>0$ such that
\begin{equation}\label{law_PsiN}
\P(\Psi_N=\lfloor\psi N\rfloor) =1- \P(\Psi_N=1) =N^{-\gamma}.
\end{equation}
This means the large family occurs with a small probability $N^{-\gamma}$ in each generation and constitutes a fixed fraction $\psi$ of the entire population. The larger the $\gamma$, the rarer the large family.

In this case,  Theorem \ref{thm:fddconvergence} holds and the limiting behavior of the conditional coalescent depends on whether $\gamma <1$, $\gamma=1$ or $\gamma>1$. Note that 
$c_N^{}$ is of the order $N^{-(\gamma \wedge 1)}$.
\begin{itemize}
\item[I.]When $\gamma <1$, the large family is not so rare,  Theorem \ref{thm:fddconvergence} holds with $\Xi=\delta_{(\frac{\psi}{4},\frac{\psi}{4},\frac{\psi}{4},\frac{\psi}{4},0,0,0,\cdots)}$. In particular, the quenched limit has no Kingman part and is determined solely by the large paintbox-merging event as follows: according to a Poisson process with rate $\int_{ \Delta \setminus \{ \mathbf{0} \}  }\frac{1}{\langle x,x \rangle} \Xi (\dd x)=\frac{4}{\psi^2}$, it performs a $(\frac{\psi}{4},\frac{\psi}{4},\frac{\psi}{4},\frac{\psi}{4},0,0,0,\cdots)$-merger as described around \eqref{eq:paintboxtransitionprime}.

\item[II.]When $\gamma =1$, Theorem \ref{thm:fddconvergence} holds with $\Xi=\frac{\psi^2}{\psi^2+2}\delta_{(\frac{\psi}{4},\frac{\psi}{4},\frac{\psi}{4},\frac{\psi}{4},0,0,0,\cdots)} +\frac{2}{\psi^2+2}\delta_{\bf 0}$ which is a mixture of the other two cases (interpolated between the case $\psi\to \infty$ and $\psi\to 0$). 
The quenched limit can be more explicitly described as follows: according to a Poisson process with rate $\int_{ \Delta \setminus \{ \mathbf{0} \}  }\frac{1}{\langle x,x \rangle} \Xi (\dd x)=\frac{\psi^2}{\psi^2+2}\frac{4}{\psi^2}=\frac{4}{\psi^2+2}$, it performs a $(\frac{\psi}{4},\frac{\psi}{4},\frac{\psi}{4},\frac{\psi}{4},0,0,0,\cdots)$-merger, and independently, Kingman coalescence occurs at rate $c_{\textnormal{pair}}^{} =\frac{2}{\psi^2+2}$. 

\item[III.]When $\gamma >1$, the large family is extremely rare and the quenched limit is the Kingman's coalescent.
\end{itemize}
Note that the above limits are obtained after rescaling time so that one unit of time is about $1/c_N$ generations. In particular, when $\gamma=1$, 
 the (annealed) rate of coalescence  for any fixed pair of lineages is exactly $\frac{4}{\psi^2+2}\frac{\psi^2}{4}+c_{\textnormal{pair}}=1$.
The model from~\cite{DiamantidisEtAl2024} is slightly different and  time was rescaled so that one unit of time corresponds to roughly $N^{\gamma \wedge 1}$ many generations, but we anticipate that Theorem \ref{thm:fddconvergence} would still hold true and the quenched limit  should be the same up to time-change by a constant factor.

\subsection{Occasional large families of an individual} \label{ex:HRindiv}

Suppose, instead of choosing a couple $\{I_1, I_2\}$ that gives rise to $\Psi_N$ many children, we choose a single individual $I_1$ which randomly mates with (possibly many different) partners. That is,
at each generation we choose a uniformly distributed individual 
$I_1 \in [N]$, specify that this individual has a random number $\Psi_N$ of children and that the other parents of each of these children are chosen uniformly and independently from 
$[N] \setminus \{I_1\}$. The remaining  $N-\Psi_N$ children are produced 
according to the diploid Wright-Fisher model  with potential parents among 
$[N] \setminus \{I_1\}$.

Suppose $(\Psi_N )_N$ satisfies \eqref{law_PsiN}. Then  Theorem \ref{thm:fddconvergence} holds and the quenched limit is the same as in Section \ref{ex:HRcouple}, but with $\delta_{(\frac{\psi}{4},\frac{\psi}{4},\frac{\psi}{4},\frac{\psi}{4},0,0,\cdots)}$ replaced by $\delta_{(\frac{\psi}{4},\frac{\psi}{4},0,0,0,0,\cdots)}$.

The models in Sections \ref{ex:HRcouple} and \ref{ex:HRindiv} can be extended to 
incorporates different number of potential parents (not necessarily exactly two parents); see \citet[Section 2.3.2]{BirknerEtAl2018}. 

\subsection{A two-sex version of the model} 
\label{Ex:two-sex}

Following Remark~\ref{rmk:2sexmodel}, a simple example along the lines of \cite{DiamantidisEtAl2024} but
with asymmetric sex roles could look as follows: let $0<\beta<1$,
$\lambda>0$, put $\alpha_N = \lambda/N$. With probability
$1-\alpha_N$, $(O_{k,\ell})$ has a symmetric multinomial distribution
$\mathrm{Multinomial}(N, u^{(N)})$ where $u^{(N)}=(u^{(N)}_{k,\ell})$
has $u_{k,\ell}^{(N)}=1/(rN \cdot (1-r)N)$. This corresponds to assigning
to each child uniformly a parent of sex~$1$ and a parent of sex~$2$,
as in a two-sex Wright-Fisher model.  With probability $\alpha_N$,
there is a highly reproductive individual of sex~$1$ and we put
$(O_{k,\ell}) \defeq (O'_{k,\ell}) + (O''_{k,\ell})$ with
$(O'_{k,\ell}) \sim \mathrm{Multinomial}(N-\lfloor \beta N \rfloor;
u^{(N)})$ and independently $(O''_{k,\ell})$ has only one non-zero row
whose position is uniform on $\{1,2,\dots,k\}$ and whose entries are
$\mathrm{Multinomial}(\lfloor \beta N \rfloor; 1/((1-r)N), \dots,
1/((1-r)N))$. One randomly chosen individual of sex~$1$ has
$\beta N$ offspring with randomly chosen partners, the other children
are created as in a two-sex Wright-Fisher model.

The quenched limit in this example looks similar to the one in Section~\ref{ex:HRindiv}. More precisely, we have
\begin{equation*}
\Xi = \frac{\lambda \beta^2 r (1-r)}{1+ \lambda \beta^2 r(1-r)} \delta_{(\beta/2, \beta/2, 0, \ldots )} + 
\frac{1}{1+ \lambda \beta^2 r(1-r)} \delta_{{\mathbf 0}}
.
\end{equation*}
Note that taking $\lambda = 2$, $r=1/2$ and $\beta = \psi$, this agrees precisely with Section~\ref{ex:HRindiv}.



\section{Illustrating the effects of pedigrees on data}
\label{sec:data}

To show some of the ways population pedigrees shape patterns of genetic variation, we simulated gene genealogies for multiple unlinked loci conditional on the pedigree.  We used the model in Section~\ref{ex:HRindiv}, in which large families are composed of half siblings with one highly successful parent.  We assumed a sample of size $n=100$, where a single gene is taken from each of $100$ individuals.  We implemented a two-parameter version of the model with $\gamma=1$, that is with large families and Kingman coalescent events occurring on the same time scale.  The first parameter, $\psi$, is again the fraction of the population replaced by each large family.  The second parameter, $\lambda$, is a scalar determining the relative rate of large families compared to Kingman coalescent events, cf.\ \citet{DiamantidisEtAl2024}.

In this model, the pedigree is just a list of times of large families.  Between these there is a Kingman coalescent.  At each large-family event, the remaining blocks trace back to one of the highly successful parent's two chromosomes, each with probability $\psi/4$, or escape the event with probability $1-\psi/2$.  In the latter case, a block is either not in the large family (probability $1-\psi$) or traces back to one of the highly successful parent's many partners (probability $\psi/2$).  

Figure~\ref{fig:sfs} shows the site-frequency spectrum (SFS) for each of five simulated pedigrees for three values of $\psi$ and two values of $\lambda$.  When $\lambda=10^6$, nearly all coalescence is due to large families and there is a relatively strong pedigree effect, unless $\psi$ is small.  Broadly, pedigrees under this model have two effects on the SFS.  First, they modulate the relative number of low-frequency polymorphic sites, the lower curves having more of these.  Singleton polymorphisms in particular will be abundant if the time back to the first large family is long.  Second, they create bumps in the SFS at intermediate frequencies.  When $\lambda$ is large, the first event in the ancestry will comprise two mergers, each with expected size $n\psi/4$ lineages.  Intuitively, the corresponding bump in the SFS will be pronounced if the time to the second event in the ancestry is long.  When $\lambda=1$, most coalescence is Kingman coalescence owing to the very fast rate ($\propto n^2$) of these in the ancestry.  The relatively small number of large families that occur in the ancestry when the number of blocks is already small due to binary mergers.  The SFS is not much different than that of a purely Kingman coalescent, though a pedigree effect may occasionally be seen when $\psi$ is large.  

\begin{figure}[!ht]
\includegraphics[scale=0.88]{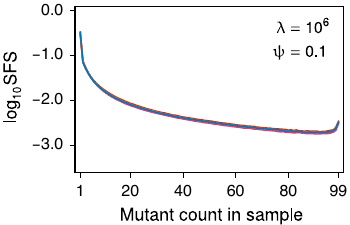}$\quad$\includegraphics[scale=0.88]{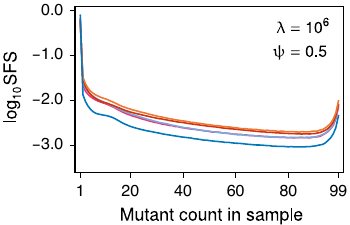}$\quad$\includegraphics[scale=0.88]{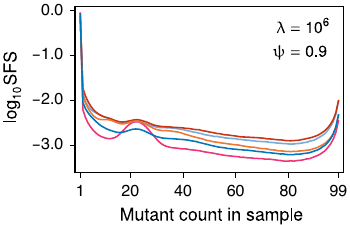}\\
\vspace{8pt}\\
\includegraphics[scale=0.88]{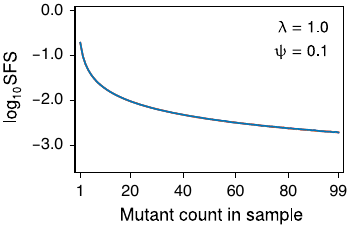}$\quad$\includegraphics[scale=0.88]{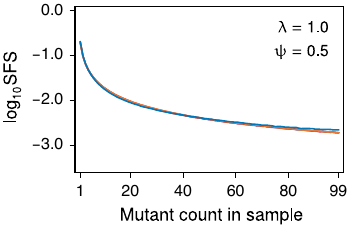}$\quad$\includegraphics[scale=0.88]{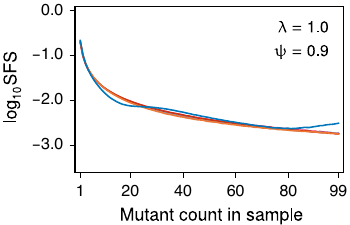}
\caption{\small Log base $10$ of the expected SFS, specifically the expected proportion of polymorphic sites for each possible count of the mutant base in a sample of size $n=100$, for three values of $\psi\in\{0.1,0.5,0.9\}$ under the $\delta_{(\frac{\psi}{4},\frac{\psi}{4},0,0,\cdots)}$ model with a Kingman component, in which $\lambda$ is the rate of $\delta_{(\frac{\psi}{4},\frac{\psi}{4},0,0,\cdots)}$ events relative to Kingman evens.  In the top row ($\lambda=10^6$) large reproduction events dominate Kingman events.  In the bottom row ($\lambda=1.0$) both occur at the same rate.  The five lines in each panel are the SFS for five independent pedigrees (lists of times of large families).  For each of these, the SFS was estimated from the simulated gene genealogies of one million unlinked loci.}\label{fig:sfs}
\end{figure}

Figure~\ref{fig:Ttotal} shows the effects of pedigrees on the total length of the gene genealogy, in the case where large reproduction events dominate Kingman events ($\lambda=10^6$).  We quantified the importance of the pedigree effect using the law of total variance,
\begin{equation}
\textrm{Var}{\left(T_\textrm{total}\right)} = \mathbb{E}{\left[\textrm{Var}{\left(T_\textrm{total}|G\right)}\right]} + \textrm{Var}{\left(\mathbb{E}{\left[T_\textrm{total}|G\right]}\right)} \label{eq:totalVarTtotal}
\end{equation}
in which the term $\textrm{Var}{\left(\mathbb{E}{\left[T_\textrm{total}|G\right]}\right)}$ represents the portion explained by the pedigree.  The left panel of Figure~\ref{fig:Ttotal} plots the two terms on the right-hand side of \eqref{eq:totalVarTtotal} as fractions of the total for a series of values of $\psi$.  We did this by computing $\textrm{Var}{\left(T_\textrm{total}\right)}$ for a single locus on $10^5$ simulated pedigrees, and $\mathbb{E}{\left[\textrm{Var}{\left(T_\textrm{total}|G\right)}\right]}$ for $20000$ loci on $2000$ simulated pedigrees. On the far left, as $\psi$ approaches zero, the particular features of the pedigree have little effect and the bulk of the variation is among loci given the pedigree.  The opposite is true on the far right, as $\psi$ approaches one.  But even if $\psi=1$, variation among loci given the pedigree does not decrease to zero because ancestral lineages do not necessarily coalesce when a large family occurs.  When $\psi=1$, each ancestral lineage still has probability $1/2$ of not even tracing back to the highly successful individual.

\begin{figure}[!ht]
\centering
\includegraphics[scale=0.88]{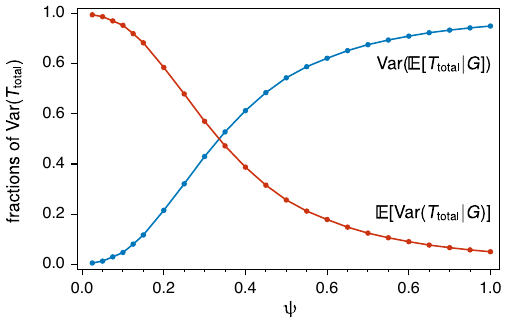}$\qquad$\includegraphics[scale=0.88]{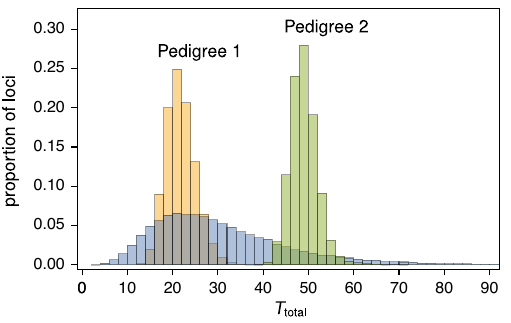}
\caption{\small Effects of pedigrees on the total length of the gene genealogy for $n=100$ under the $\delta_{(\frac{1}{4},\frac{1}{4},0,0,\ldots)}$ model with a negligible Kingman component ($\lambda=10^6$).  Left: Relative importance of the two sources of variation in $T_\textrm{total}$ among loci from \eqref{eq:totalVarTtotal} expressed as fractions of the total variance, for values of $\psi$ ranging from $0.025$ to $1$.  The contributions sum to one, but both are displayed for illustration.  Right: Distributions of $T_\textrm{total}$ among $50000$ unlinked loci given each of two randomly generated pedigrees for the case $\psi=1$, compared to the corresponding distribution (in blue) for the annealed model, i.e.\ the distribution of $T_\textrm{total}$ at a single locus among $50000$ randomly generated pedigrees.}\label{fig:Ttotal}
\end{figure}

The right panel of Figure~\ref{fig:Ttotal} displays the pedigree effect on the distribution of $T_\textrm{total}$ for the extreme case, $\psi=1$.  The two different simulated pedigrees have distinct means of $T_\textrm{total}$ and much narrower distributions than the corresponding distribution among pedigrees (shown in blue).  From these simulated data we have $\widehat{\textrm{Var}}{\left(T_\textrm{total}|G_1\right)}=10.5$, $\widehat{\textrm{Var}}{\left(T_\textrm{total}|G_2\right)}=9.0$ and $\widehat{\textrm{Var}}{\left(T_\textrm{total}\right)}=227.3$ as estimates.  These give a rough estimate of $0.043$ for the fraction of the total variance due to variance given the pedigree which even with only two pedigrees is comparable to the value ($0.051$) for $\psi=1.0$ in the left panel.

\section{Further remarks and possible extensions}
\label{sect:extensions}

Conditional on the population pedigree in the diploid biparental model of reproduction of \citet{BirknerEtAl2018}, Theorem~\ref{thm:fddconvergence} establishes a new limiting inhomogeneous coalescent process in which simultaneous multiple mergers occur at fixed times determined by past outcomes of high reproductive success.  It can also include a Kingman component if the pedigree includes enough small reproduction events, or be fully characterized by the Kingman coalescent if large reproduction events become negligible in the limit.  These results, like those for the simple illustrative model in \citet{DiamantidisEtAl2024}, have consequences for coalescent-based inference and simulation as well as for future modeling.  

Figures~\ref{fig:sfs} and~\ref{fig:Ttotal} illustrate the potentially dramatic effects of pedigrees on the site-frequency spectrum and the total length of the gene genealogy, which is a key quantity affecting the overall level of genetic variation. Again, Figures~\ref{fig:sfs} and~\ref{fig:Ttotal} depict variation among loci in the same sample of individuals on a single shared pedigree.  The differences among pedigrees in these figures are entirely due to differences in the times at which large reproduction events have occurred in the ancestry of the population.  They suggest that multilocus genetic data may contain signatures of particular large reproduction events in the past.  Thus, inference shifts to the times and sizes of fixed events in the past, away from or in addition to the population-level parameters important in establishing those events, such as the rate at which large families occur.  For some additional discussion of these issues, see \citet[page~15]{DiamantidisEtAl2024}.

Although Theorem~\ref{thm:fddconvergence} also proves that it is not necessary to keep track of the entire pedigree, even just accounting for a series of times and sizes of large reproduction events presents challenges for analysis and simulation.  Predictions for various measures of genetic variation should be straightforward to compute, at least in principle.  But recall that the chance of a pairwise coalescent event in a generation with a large family whose offspring comprise a fraction $x$ of the population is just $x^2/4$, so very many small-$x$ events will need to be accounted for in generating such predictions.  Similarly, simulations of gene genealogies will be slow when small-$x$ events are frequent.  

For example, populations with highly skewed distributions of offspring numbers for which in the traditional (unconditional) approach would have been modeled using a Beta-coalescent \citep{schweinsberg2003coalescent}, e.g.\ as arising from the examples in Sections~\ref{ex:diploidind} and \ref{ex:GWforcouples}, may present a computational challenge under the new conditional framework.  The $\mathrm{Beta}(2-\alpha,\alpha)$ measure for $\alpha \in (1,2)$ has probability density
\begin{equation}
  \label{betadensity}
  \frac{1}{\Gamma(2-\alpha)\Gamma(\alpha)} x^{1-\alpha}(1-x)^{\alpha-1}, \quad 0 < x < 1, 
\end{equation}
which is of course manageable.  But the Poisson point process
\begin{equation}
  \label{PPP}
  \Phi = \sum_{j} \delta_{(x_j,t_j)}
\end{equation}
on $(0,1] \times [0,\infty)$ underlying the construction of a
$\mathrm{Beta}(2-\alpha,\alpha)$-coalescent has intensity measure
$\nu(\dd x) \dd t$ where $\nu$ has density
\begin{equation}
  \label{betadensity2}
  \frac{1}{\Gamma(2-\alpha)\Gamma(\alpha)} x^{-1-\alpha}(1-x)^{\alpha-1}, \quad 0 < x < 1 ,
\end{equation}
in other words $\nu(\dd x) = x^{-2} \mathrm{Beta}(2-\alpha,\alpha)(\dd x)$.   Since $\nu$ is an infinite measure, one will have to suitably truncate it for practical applications and thus replace \eqref{PPP} by an approximation which has only finitely many atoms, and possibly add an artificial ``effective'' pair merger rate. The required precision of the approximation will likely depend on the sample size. We leave these issues to future work.

\paragraph{Sex-specific patterns of inheritance} Recall from Remark~\ref{rmk:2sexmodel} that two-sex population models are in principle included in our framework as long as one focuses on an autosomal locus which is transmitted symmetrically by both sexes. We believe that a suitable adaptation of our results will continue to hold for a sex-linked locus, e.g.\ on an X chromosome. After all, which individual an ancestral gene resides in, and in particular also the sex of that individual, averages very quickly on the time scale $1/c_N$.

\paragraph{Multi-locus genealogies and recombination} Our main result, Theorem~\ref{thm:fddconvergence}, describes the law of the gene genealogy of the sample at a single autosomal locus conditioned on the pedigree. It also describes the joint conditional law for two or more freely recombining loci - e.g.\ residing on different chromosomes - because then this conditional law is simply the product of the single-locus conditional law, as observed in Section~\ref{sec:data}. The situation is different for linked loci. In this situation, we expect a conditional analogue to the results in\ \cite{BirknerEtAl2013c}: Consider say two loci on the same chromosome, assume that each child inherits from each parent with probability $1-\rho c_N$ both loci from the same parental chromosome and with probability $\rho c_N$ the two loci from different parental chromosomes, where $\rho>0$ is a recombination parameter (we implicitly assume $\rho c_N < 1$, which always holds for large $N$). Then we expect under the assumptions of Theorem~\ref{thm:fddconvergence} the conditional law of the joint gene genealogies at the two loci to be approximately described by an inhomogeneous $(\Psi,c)$-ancestral recombination graph, where lineages which are ancestral for both loci split independently at rate $\rho$ in addition to the mechanisms under the $(\Psi, c)$-coalescent. We leave the details to future work.

\paragraph{Corresponding forward-time models} While backward-time coalescent models are well suited to the analysis of samples from natural populations, it is also interesting to ask about forward-time models that are connected to an (inhomogeneous) $(\psi,c)$-coalescent $\Pi^{\psi,c}$ from Definition~\ref{def:inhomcoalescentfullygeneral} via duality. For a given space $E$ of neutral ``types'', these are time-inhomogeneous Markov processes with values in the probability measures on $E$, intuitively they are ``quenched versions'' of $\Xi$-Fleming-Viot processes (e.g.\ \cite{BirknerEtAl2009}). For a formal construction, one could either construct their transition semigroups using the duality approach from \cite{Evans1997} or one could extend the ``look down'' construction from \cite{BirknerEtAl2009} to a time-inhomogeneous scenario.

In general, these would be processes with discrete jumps at particular times, potentially with Wright-Fisher diffusion in between.  Their predictions could be applied to data from laboratory evolution experiments like the famous one by \citet{Buri1956} using \textit{Drosophila melanogaster}.  Importantly, the same set of jumps with fixed times would affect all loci in the genome.  Already it is known for forward-time (``annealed'') versions of $\Lambda$-coalescents and $\delta_\psi$-coalescents that properties, including the time to fixation of a mutant under neutrality and the probability of fixation of a mutant under selection, differ quite dramatically from those of the Wright-Fisher diffusion \citep{DerEtAl2011,DerEtAl2012,BirknerEtAl2024,EldonAndStephan2024,CorderoEtAl2025}.  \citet{DerEtAl2011} applied such a model to the data of \citet{Buri1956} and suggested that even this small experiment ($107$ replicate populations of $8$ males and $8$ females over $19$ generations) provided some evidence of non-diffusive evolution.  Thus, the application of quenched and diploid (``$\Xi$'' rather than just ``$\Lambda$'') versions of these models is another interesting area for future work.

\appendix
\section*{Appendix}
\section{Jump-hold characterisation of inhomogeneous coalescents}
\label{app:jumphold}

To complement our description of the inhomogeneous $(\psi,c)$-coalescent processes in terms of stochastic flows in Section~\ref{sec:inhomcoal}, we briefly outline an alternative description by providing the distribution of the first jump. This point of view is particularly useful for carrying out simulations.

Given a point configuration 
$\psi 
=
\sum_i \delta_{ (t^{(i)}, x^{(i)} )}^{}
\in \cN^2_{\mathsf{loc}}$, recall that the expression
\begin{equation}\label{eqq:noncoal.psi.s<t.0}
\prod_{i \, : \, s < t^{(i)} < u} 
p(x^{(i)}; \xi,\xi)
\end{equation}
is well-defined and strictly positive for
$0 \leqslant s < u < \infty$ and is the probability that, using the notation introduced in \eqref{eq:paintboxeff} in Section~\ref{sec:inhomcoal}, 
$\cC_\psi^{} \cap (s,u) = \varnothing$.
This means that no GLIP caused a coalescence event during the time interval $(s,u)$. By taking the logarithm, we can rewrite this as
\begin{equation} \label{eqq:Gnon.coal.s.u.psi}
  G^{\mathsf{nc}}_n(s,u ; \psi) := \exp\left( - \int_{(s,u) \times \Delta} g^{\mathsf{nc}}_n(x) \, \psi\big(\dd(t,x)\big) \right),
\end{equation}
where 
\begin{equation} \label{eqq:gnnc}
  g^{\mathsf{nc}}_n(x)
  = - \log\left( \sum_{\ell=0}^n \sum_{\substack{i_1,i_2,\dots,i_l \in \N \\ \text{pairw.\ diff.}}}
  \binom{n}{\ell} x_{i_1} x_{i_2} \cdots x_{i_\ell} \big( 1 - |x|_1^{} \big)^{n-\ell} \right),
  \quad x \in \Delta \setminus \{ \bf 0 \}.
\end{equation}
The superscript `$\mathsf{nc}$' stands for `non-coalescence',
note that $g^{\mathsf{nc}}_n(x) = -\log p(x; \xi^{(n)}_0, \xi^{(n)}_0)$ with
$\xi^{(n)}_0 \defeq \big  \{\{1\},\{2\},\dots,\{n\} \big \} \in \mathcal{E}_n$.

To take the independent pairwise `Kingman' mergers into account, we note that since those happen at rate $c$, the probability that none of them occur on the time interval $(s,u)$ is simply
$\exp \big (
-c \binom{n}{2} (u-s)
\big )$.
The probability that we do not see any coalescence event on the time interval
$(s,u)$ (either a Kingman- or a multiple merger) is therefore given by
\begin{align} 
  \label{eqq:Gnon.coal.s.u.psi+pair}
  G^{\mathsf{nc}}_n(s,u ; c, \psi)
  :=
  G^{\mathsf{nc}}_n(s,u ; \psi) \exp\left( - c \textstyle{ \binom{n}{2} (u-s)} \right).
\end{align}
Assuming $\Pi_t^n = \xi$ and for any $\eta \in \cE_n$,
in order for us to observe the transition $\xi \to \eta$ at exactly time $u$, two things need to happen. First, there must be no coalescence on the time interval $(t,u)$. Secondly, we need to jump to $\eta$ at time $u$.
The probability that the latter happens by virtue of a large merger is 
\begin{equation*}
\sum_{i \geqslant 1} p(x^{(i)};\xi,\eta) 
\delta_{t^{(i)}}^{} (\dd u),
\end{equation*}
while the probability that this happens due to a Kingman merger is 
\begin{equation*}
c \1_{\xi \vdash \eta} \dd u.
\end{equation*}
Multiplying this with the non-coalescence probability in \eqref{eqq:Gnon.coal.s.u.psi+pair}, we obtain
\begin{equation*}
\begin{split}
& G_{\# \xi}^{\mathsf{nc}} (t,u;c,\psi) 
\Big(
c \1_{\xi \vdash \eta} \dd u 
+
\sum_{i \geqslant 1} 
p(x^{(i)};\xi,\eta) 
\delta_{t^{(i)}}^{} (\dd u)
\Big ) \\
& \quad =
c \1_{\xi \vdash \eta} 
G_{\# \xi}^{\mathsf{nc}} (t,u;c,\psi)  \dd u
+
\sum_{i \geqslant 1} 
G_{\# \xi}^{\mathsf{nc}} (t,u;c,\psi)
p(x^{(i)};\xi,\eta) 
\delta_{t^{(i)}}^{} (\dd u) \\
& \quad =
c \1_{\xi \vdash \eta} 
G_{\# \xi}^{\mathsf{nc}} (t,u;c,\psi)  \dd u
+
\sum_{i \geqslant 1} 
G_{\# \xi}^{\mathsf{nc}} (t,t^{(i)};c,\psi)
p(x^{(i)};\xi,\eta) 
\delta_{t^{(i)}}^{} (\dd u).
\end{split}
\end{equation*}
To summarise:
\begin{lemma}
\label{lem:inhomogeneouscoalescentjump}
For $c \geqslant 0$,
$\psi \in \cN_{\mathsf{loc}}^2$ and
$t \geqslant 0$, denote by 
$J^{\psi, c}_t := \inf\{ u > t : \Pi^{\psi,c}_u \neq \Pi^{\psi,c}_t \}$
the time of the first jump after time $t$.
The joint law of the first jump time and target state after time $t$ is given by 
\begin{align} \label{eqq:jumplaw}
  & {\bf P}\big( J^{\psi,c}_t \in \dd u, \,
    \Pi^{\psi,c}_{J^{\psi, c}_t} = \eta \,\big|\,  \Pi^{c, \psi}_t = \xi \big) \notag \\
  & = \sum_{i \, : \, t < t^{(i)} } G^{\mathsf{nc}}_{\#\xi}(t,t^{(i)} ; c, \psi) p(x^{(i)}; \xi,\eta) \delta_{t^{(i)}}(\dd u)
    \notag \\
  & \quad + c \1_{\xi \vdash \eta}  G^{\mathsf{nc}}_{\#\xi}(t,u ; c, \psi) \dd u,
\end{align}
\end{lemma}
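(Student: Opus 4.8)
The plan is to read off the law of the first jump directly from the flow construction in Definition~\ref{def:inhomcoalescentfullygeneral}. Recall that $\Pi^{\psi,c}$ is built from the superposition $\cC_\psi \cup \cK_c$ of two independent families of marked points on $(0,\infty)\times\cE_n$, where $\cK_c$ has $\cK_c^{i,j}$ a rate-$c$ Poisson process for each pair $\{i,j\}$, and $\cC_\psi$ retains the atom $(t^{(i)},\alpha_{t^{(i)},x^{(i)}})$ whenever the independent draw $\alpha_{t^{(i)},x^{(i)}}\sim p(x^{(i)};\xi_0^{(n)},\cdot)$ is nontrivial. First I would observe that, since $\Pi^{\psi,c}$ is a time-inhomogeneous jump-hold Markov chain, it suffices to compute the joint law of the first jump time $J_t^{\psi,c}$ after time $t$ and of the state $\Pi^{\psi,c}_{J_t^{\psi,c}}$, conditionally on $\Pi^{\psi,c}_t=\xi$. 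By the Markov property at time $t$ and the exchangeability of the coagulator mechanism (a block of $\xi$ merges according to which block of $\alpha$ its index lands in, independently of how $\xi$ itself was formed), we may replace $\xi_0^{(n)}$ by $\xi$ in all the paintbox draws; so the relevant drivers on $(t,\infty)$ are: for each $i$ with $t^{(i)}>t$, an independent $\alpha^{(i)}\sim p(x^{(i)};\xi,\cdot)$ realizing a nontrivial merger of $\xi$ with probability $1-p(x^{(i)};\xi,\xi)$; and, for each pair $\{i',j'\}$ of \emph{blocks} of $\xi$, an independent $\mathsf{Exp}(c)$ clock realizing the pair-coalescence $\xi\vdash\eta$.

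Next I would compute the no-coalescence probability on an interval $(t,u)$. Because the $\alpha^{(i)}$ are independent, the chance that none of the GLIP atoms in $(t,u)$ triggers a merger of $\xi$ is $\prod_{i\,:\,t<t^{(i)}<u}p(x^{(i)};\xi,\xi)$, which equals $G^{\mathsf{nc}}_{\#\xi}(t,u;\psi)$ after taking the logarithm and rewriting the product as an integral against $\psi$, using the identity $g^{\mathsf{nc}}_{\#\xi}(x)=-\log p(x;\xi_0^{(\#\xi)}_0,\xi_0^{(\#\xi)}_0)$ from \eqref{eqq:gnnc} together with the fact that $p(x;\xi,\xi)$ depends on $\xi$ only through $\#\xi$. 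This last fact is immediate from \eqref{eq:paintboxtransitionprime}: with $r=0$ the weight only sees the number of blocks. Independently, the superposition of the $\binom{\#\xi}{2}$ pair clocks is a rate-$c\binom{\#\xi}{2}$ Poisson process, so the chance it has no point in $(t,u)$ is $\exp(-c\binom{\#\xi}{2}(u-t))$; multiplying gives $G^{\mathsf{nc}}_{\#\xi}(t,u;c,\psi)$ as in \eqref{eqq:Gnon.coal.s.u.psi+pair}. The convergence/positivity issues for the possibly-dense product are exactly those already handled in the discussion around \eqref{eq:noncoal.psi.s<t.0}, which I would simply cite.

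Finally I would assemble the joint law by a standard first-passage decomposition: the first jump occurs either at one of the discrete times $t^{(i)}$ (driven by $\cC_\psi$) or at an exponential time (driven by $\cK_c$), and the two mechanisms are independent. Conditioned on no coalescence before $u$: the density (in $u$) of a Kingman-type pair merger into $\eta$ with $\xi\vdash\eta$ is $c\,\1_{\xi\vdash\eta}$, contributing $c\,\1_{\xi\vdash\eta}\,G^{\mathsf{nc}}_{\#\xi}(t,u;c,\psi)\,\dd u$; and the probability of a GLIP-driven jump to $\eta$ exactly at $u=t^{(i)}$ is $p(x^{(i)};\xi,\eta)$ (the probability that $\alpha^{(i)}$ encodes precisely the coarsening $\xi\to\eta$), contributing a $\delta_{t^{(i)}}(\dd u)$ atom weighted by $G^{\mathsf{nc}}_{\#\xi}(t,t^{(i)};c,\psi)\,p(x^{(i)};\xi,\eta)$, where one uses that the density $G^{\mathsf{nc}}_{\#\xi}(t,u;c,\psi)$ is left-continuous at $t^{(i)}$ and that $t^{(i)}$ carries no Kingman mass almost surely. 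Summing over $i$ with $t^{(i)}>t$ and adding the absolutely continuous part yields \eqref{eqq:jumplaw}. The main obstacle is purely bookkeeping rather than analytic: one must be careful that, even when $\psi$ has atoms dense in $(t,\infty)$, only effectively finitely many of them produce a genuine jump of $\xi$ (so $J_t^{\psi,c}>t$ a.s.), and that the tie-breaking rule in Definition~\ref{def:inhomcoalescentfullygeneral} never affects the \emph{first} jump because, almost surely, no two atoms of $\cC_\psi\cup\cK_c$ share a time; both points follow from the local finiteness established just after Definition~\ref{def:inhomcoalescentfullygeneral} (via the $\cN^2_{\mathsf{loc}}$ bound $1-p(x;\xi,\xi)\le K\langle x,x\rangle$) and a routine null-set argument.
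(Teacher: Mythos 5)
Your proposal is correct and follows essentially the same route as the paper: the paper's Appendix derives the lemma by exactly this decomposition, identifying the no-coalescence probability on $(t,u)$ with $G^{\mathsf{nc}}_{\#\xi}(t,u;c,\psi)$ (product of $p(x^{(i)};\xi,\xi)$ over atoms in the interval times the Kingman factor $\exp(-c\binom{\#\xi}{2}(u-t))$) and then multiplying by the atomic part $\sum_i p(x^{(i)};\xi,\eta)\,\delta_{t^{(i)}}(\dd u)$ plus the absolutely continuous part $c\,\1_{\xi\vdash\eta}\,\dd u$. Your additional remarks on the dense-atom and tie-breaking issues are consistent with the discussion the paper gives around \eqref{eq:noncoal.psi.s<t.0} and after Definition~\ref{def:inhomcoalescentfullygeneral}.
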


\section{Proof of Lemma~\ref{lem:qpure}}
\label{app:lemmaproof}

\begin{proof}
We first treat the case where $\hat s = 0 = \check s$. We first fix a choice of ancestral genes for $\hat \eta$ and $\check \eta$, letting
$(\hat c_1^{}, \hat \imath_1^{}),\ldots,
(\hat c_{\hat r}^{}, \hat \imath_{\hat r}^{}) \in 
\{0,1\} \times [N]$ and 
$(\check c_1^{}, \check \imath_1^{}),\ldots,
(\check c_{\check r}^{}, \check \imath_{\check r}^{}) \in 
\{0,1\} \times [N]$
be respectively pairwise distinct. However, we do not assume 
$(\hat c_1^{},\hat \imath_1^{}),\ldots,
(\hat c_{\hat r}^{}, \hat \imath_{\hat r}^{}),
(\check c_1^{}, \check \imath_1^{}),\ldots,
(\check c_{\check r}^{},\check \imath_{\check r}^{}) \in 
\{0,1\} \times [N]
$
to be pairwise distinct. This is due to the summation in the definition of
$\widetilde \pi_{\mathsf{pure}}^{N}$, which amounts to ignoring the grouping of ancestral genes into diploid individuals as well as which genes are shared by the two coalescents.

To proceed,
we need to compute the probability that the $\hat k_j^{}$ genes corresponding to the classes of $\hat \xi$ that make up the 
$j$-th block of $\hat \eta$ have the ancestor 
$(\hat c_j^{}, \hat \imath_j^{})$
and that the $\check k_j^{}$ genes corresponding to the classes of $\check \xi$ that make up the $j$-th block of $\check \eta$ have the ancestor 
$(\check c^{}_j, \check \imath^{}_j)$. The same reasoning as in Subsection~\ref{subsec:transitionsconditionalonoffspringnumbers} shows that this probability is given by
\begin{equation}\label{eq:aowie}
\frac{1}{(N)_{\hat b + \check b \downarrow}} \E
\Bigg [
\prod_{\ell \in \{ \hat \imath_1^{}, \ldots, \hat \imath_{\hat r}^{},
         \check \imath_1^{}, \ldots, \check \imath_{\check r} 
\}  }
\big (  \widehat V_{\ell} \big )_{K_\ell \downarrow}
\Bigg ]
\cdot 
2^{-\hat b - \check b}.
\end{equation}
where 
\begin{equation} \label{eq:Kelldef}
K_\ell \defeq
\sum_{j = 1}^{\hat r} \1_{\ell = \hat \imath_j^{}} \hat k^{}_j 
+
\sum_{j = 1}^{\check r} \1_{\ell = \check \imath_j^{}} 
\check k^{}_j.
\end{equation}
is how often individual $\ell$ is picked as the $0$-parent.
Note that $K_\ell$ contains up to $4$ nontrivial contributions, a maximum of $2$ for each of the two sums.

The first factor in \eqref{eq:aowie} is the probability of the genes in question to choose the correct 
$0$-parents, and 
$2^{-\hat b - \check b}$ is the conditional probability of picking the correct one of the two potential ancestral genes in each case.

Summing over all choices of ancestral genes gives
\begin{equation} \label{eq:pitildepurewrittenout}
\begin{split}
&\frac{1}{c_N^{}} \widetilde \pi_{\textnormal{pure}}^{N} (\xi, \eta)
\\ & \quad =
\sum_{\substack{  
(\hat c_1^{}, \hat \imath_1^{}), \ldots, 
(\hat c_{\hat r}^{} , \hat \imath_{\hat r}^{}) 
\in \{0,1\} \times [N] \\
\textnormal{pairwise distinct}
}} \,
\sum_{\substack{  
(\check c_1^{}, \check \imath_1^{}), \ldots, 
(\check c_{\check r}^{} , \check \imath_{\check r}^{}) 
\in  \{ 0,1 \} \times [N]\\
\textnormal{pairwise distinct}
}}
\frac{2^{-\hat b - \check b}}{c_N^{} (N)_{\hat b + \check b \downarrow}} \E
\Bigg [
\prod_{\ell \in \{ \hat \imath_1^{}, \ldots, \hat \imath_{\hat r}^{},
         \check \imath_1^{}, \ldots, \check \imath_{\check r}^{} 
\}  }
\big (  \widehat V_{\ell} \big )_{K_\ell \downarrow}
\Bigg ],
\end{split}
\end{equation}

We now rewrite the sum in \eqref{eq:pitildepurewrittenout}. First, we group the terms in the double sum according to the number $d \defeq \# \{\hat \imath_1^{},\ldots,
\hat \imath_{\hat r}^{},
\check \imath^{}_1,\ldots,\check \imath_{\check r}^{} \}$ of distinct individuals. Note that $d$ can take values from $d_{\textnormal{min}}^{} \defeq \lceil \hat r/2 \rceil \vee \lceil \check r / 2 \rceil$ to 
$d_{\textnormal{max}}^{} \defeq \hat r + \check r$. 

For each value of $d$, we have a choice of $d$ distinct indices 
$j_1^{}, \ldots, j_d^{} \in [N]$. In order to establish a  correspondence between the choice of
$j_1^{}, \ldots, j_d^{}$ and 
$\hat \imath_1^{}, \ldots, \hat \imath_{\hat r}^{},\check \imath^{}_1, \ldots, \check \imath^{}_{\check r}$, we furthermore choose two injective maps $\hat \varrho : [\hat r] \hookrightarrow \{0,1\} \times [d] $ and 
$\check \varrho : [\check r] \hookrightarrow 
\{0,1\} \times [d]$. 

Clearly, to each choice of $d$, $j_1^{},\ldots, j_d^{}$ and of $\hat \varrho$ and $\check \varrho$ we can uniquely associate 
pairwise distinct $(\hat c_1^{}, \hat \imath_1^{}),\ldots,(\hat c_{\hat r}^{}, \hat \imath_{\hat r}^{})$ and 
$(\check c_1^{},\check \imath^{}_1),\ldots,
(\check c_{\check r}^{},\check \imath_{\check r}^{})$ by letting 
$(\hat c_\ell^{}, \hat \imath_{\ell}^{}) \defeq (\hat \varrho(\ell)_1^{}, j_{\hat \varrho (\ell)^{}_2}^{})$ and 
$(\check c_\ell^{}, \check \imath_{\ell}^{}) \defeq
(\check \varrho(\ell)_1^{}, j_{\check \varrho (\ell)_2^{}}^{})$ for
$\ell \in [\hat r]$ and $\ell \in [\check r]$, respectively.

Conversely, given $(\hat c_1^{}, 
\hat \imath_1^{}),\ldots,
(\hat c_{\hat r}^{}, \hat \imath_{\hat r}^{})$ and 
$(\check c_1^{}, \check \imath^{}_1),\ldots,
(\check c_{\check r}^{}, \check \imath_{\check r}^{})$, we pick 
$d$ and
pairwise distinct $j_1^{}, \ldots, j_d^{} \in [N]$ so that
$\{ j_1^{}, \ldots, j_d^{} \} = 
\{ \hat \imath_1^{}, \ldots, \hat \imath_{\hat r}^{}, \check \imath^{}_1, \ldots, \check \imath^{}_{\check r}  \}$. 
The injections $\hat \varrho$ and $\check \varrho$ are then uniquely determined via $\hat \varrho (\ell) \defeq ( \hat c_\ell^{}, \widetilde \ell)$ for all $\ell \in [\hat r]$, where $\widetilde \ell$ is the unique element of $[d]$ such that $j_{\widetilde \ell} = \hat \imath_\ell^{}$. Analogously for all 
$\ell \in [\check r]$, we define
$\hat \varrho (\ell) \defeq (\check c^{}_\ell, \widetilde \ell)$ with 
$\widetilde \ell$ being the unique element of $[d]$ such that
$j_{\widetilde \ell}^{} = \check \imath_\ell^{}$. 

This works just as well when $[N]$ is replaced by an arbitrary index set, e.g., $\N$. Note that this correspondence is not one-to-one. In fact, it is one-to-$d!$ because for any fixed $d$, there are $d!$ permutations of
$j_1^{}, \ldots, j_d^{}$. 

With this, \eqref{eq:pitildepurewrittenout} becomes
\begin{equation*}
\begin{split}
&\frac{1}{c_N^{}} \widetilde \pi_{\textnormal{pure}}^{N} (\xi, \eta)
\\ & \quad = 
\sum_{d = d_{\textnormal{min}}^{}}^{d_{\textnormal{max}}^{}}
\frac{1}{d!} \sum_{\substack{j_1^{},\ldots,j_d^{} \in [N] \\ \textnormal{distinct} }}
\sum_{\hat \varrho: [\hat r] \hookrightarrow  
\{0,1\} \times [d]}
\sum_{\check \varrho: [\check r] \hookrightarrow 
\{0,1\} \times [d]}
\frac{2^{-\hat b - \check b}}{c_N^{} (N)_{\hat b + \check b \downarrow}}
\E \Big [
\prod_{\ell \in \{j_1^{},\ldots,j_d^{} \} } ( \widehat V_\ell)_{K_\ell \downarrow}
\Big] \\
& \quad = \sum_{d = d_{\textnormal{min}}^{}}^{d_{\textnormal{max}}^{}}
\frac{1}{d!} 
\sum_{\varrho_1^{}: [\hat r] \hookrightarrow \{0,1 \} \times [d]}
\sum_{\varrho_2^{}: [\check r] \hookrightarrow \{0,1 \} \times [d]}
\frac{(N)_{d \downarrow} 2^{-\hat b - \check b}}{c_N^{} (N)_{\hat b + \check b \downarrow}}
\E \Big [
\prod_{\ell =1 }^d ( \widehat V_\ell)_{\widetilde K_\ell \downarrow}
\Big],
\end{split}
\end{equation*}
where we used the exchangeability of the $ \widehat V_i$ in the second step by letting individual $\ell \in [d]$ take the role of individual $j_\ell^{}$ for each $\ell \in [d]$. This implies that 
$\widetilde K_\ell$ is given by
\begin{equation*}
\widetilde K_\ell \defeq 
\hat k_{\hat \varrho^{-1}(0,\ell)} 
+
\hat k_{\hat \varrho^{-1}(1,\ell)}
+
\check k_{\check \varrho^{-1}(0,\ell)}
+
\check k_{\check \varrho^{-1}(1,\ell)};
\end{equation*}
we set $\dot k_{\dot \varrho^{-1}(\ell,c)} \defeq 0$
whenever there is no preimage. 

Combining Lemma 3.3 and Eqs.(6,7) (cf.~Remark~\ref{rmk:relevantoffspring}) in~\cite{BirknerEtAl2018}, we see that
\begin{equation} \label{eq:quotation}
\begin{split}
&\lim_{N \to \infty}
\frac{(N)_{d \downarrow}}{c_N^{} (N)_{\hat b+ \check b \downarrow}}
\E \Big [
\prod_{\ell =1 }^d ( \widehat V_\ell)_{\widetilde K_\ell \downarrow}
\Big]  = 
\lim_{N \to \infty}
\frac{1}{c_N^{} N^{\hat b + \check b- d} }
\E \Big [
\prod_{\ell =1 }^d ( \widehat V_\ell)_{\widetilde K_\ell \downarrow}
\Big] \\
& \quad = 
\lim_{N \to \infty}
\frac{1}{c_N^{} N^{\hat b + \check b - d}} 2^{\hat b + \check b }
\E \Big [
\prod_{\ell = 1}^d (V_\ell)_{\widetilde K_\ell \downarrow}
\Big] \\
& \quad  =
\int_{\Delta \setminus \{ \mathbf{0} \}}
\sum_{\substack{j_1^{}, \ldots, j_d^{}
= 1 \\ \textnormal{distinct}}}^\infty 
\prod_{\ell \in [d]} x_{j_\ell^{}}^{\widetilde K_\ell} 
\frac{2 \Xi'(\dd x)}{\langle x, x \rangle} + 
\1_{d=1, \widetilde K_1^{} = 2}
2 \Xi'\big (\{ \bf 0  \} \big).
\end{split}
\end{equation}

For all $x \in \Delta$, we write
\begin{equation} \label{eq:phixrelabelling}
\varphi(x) = (x_1^{}/2,x_1^{}/2,x_2^{}/2,x_2^{}/2,\ldots)
\eqdef
(y_{0,1}^{},y_{1,1}^{},y_{0,2}^{},y_{1,2}^{},\ldots).
\end{equation}
Let us now deal with the product inside the integral.
Recalling the definition of $\widetilde K_\ell$, 
we obtain
\begin{equation*}
\begin{split}
&\prod_{\ell \in [d]} x_{j_\ell^{}}^{\widetilde K_\ell} \\
& \quad =
\prod_{\ell \in [d]} x_{j_\ell^{}}^{\hat k_{\hat \varrho^{-1}(0,\ell)} 
+
\hat k_{\hat \varrho^{-1}(1,\ell)}
+
\check k_{\check \varrho^{-1}(0,\ell)}
+
\check k_{\check \varrho^{-1}(1,\ell)}} \\
& \quad =
2^{\hat b + \check b} \prod_{\ell \in [d]} 
y_{0,j_\ell^{}}^{\hat k_{\hat \varrho^{-1}(0,\ell)}}
y_{1,j_\ell^{}}^{\hat k_{\hat \varrho^{-1}(1,\ell)}}
\prod_{\ell \in [d]}
y_{0,j_\ell^{}}^{\check k_{\check \varrho^{-1}(0,\ell)}}
y_{1,j_\ell^{}}^{\check k_{\check \varrho^{-1}(1,\ell)}}.
\end{split}
\end{equation*}
Because $\hat \varrho$ and $\check \varrho$ are injective, 
$\hat \varrho^{-1}(0,\ell)$ and $\hat \varrho^{-1}(1,\ell)$ with
$\ell$ ranging from $1$ to $d$ are just a rearrangement of 
$[\hat r]$. Hence,
\begin{equation*}
\prod_{\ell \in [d]} 
y_{0,j_\ell^{}}^{\hat k_{\hat \varrho^{-1}(0,\ell)}}
y_{1,j_\ell^{}}^{\hat k_{\hat \varrho^{-1}(1,\ell)}}
=
\prod_{\ell \in [\hat r]} 
y_{\hat \varrho (\ell)_1^{}, j_{\hat \varrho (\ell)_2^{}}}^{\hat k_\ell}.
\end{equation*}
Similarly,
\begin{equation*}
\prod_{\ell \in [d]} 
y_{j_\ell^{},0}^{\check k_{\check \varrho^{-1}(\ell,0)}}
y_{j_\ell^{},1}^{\check k_{\check \varrho^{-1}(\ell,1)}}
=
\prod_{\ell \in [\check r]} 
y_{ \check \varrho(\ell)_1^{},j_{\check \varrho(\ell)_2^{}}}^{\check k_\ell}
\end{equation*}
Together with (47), this shows that
\begin{equation*}
\begin{split}
&\lim_{N \to \infty}
\frac{(N)_{d \downarrow} 2^{-\hat b - \check b}}{c_N^{} (N)_{\hat b + \check b \downarrow}}
\E \Big [
\prod_{\ell =1 }^d ( \widehat V_\ell)_{\widetilde K_\ell \downarrow}
\Big] \\
&\quad =
\sum_{\substack{j_1^{}, \ldots, j_d^{} = 1 \\ \textnormal{distinct}}}^\infty 
\int_{\Delta \setminus \{ \mathbf{0} \}}
\prod_{\ell \in [\hat r]} 
y_{\hat \varrho (\ell)^{}_1, j_{\hat \varrho(\ell)^{}_2}^{} }^{\hat k_\ell}
\prod_{\ell \in [\check r]} 
y_{\check \varrho (\ell)^{}_1, j_{\check \varrho (\ell)^{}_2}^{}}^{\check k_\ell}
\frac{2 \Xi' (\dd x )}{\langle x,x \rangle}
+ 
\1_{d=1, \widetilde K_1^{} = 2} 2 \Xi'\big (\{ \bf 0  \} \big).
\end{split}
\end{equation*}
Let us ignore the term with the indicator for the time being. Summing over the choices of $j_1^{},\ldots,j_d^{}$ and the injections
$\hat \varrho$ and $\check \varrho$, we see that
\begin{equation*}
\begin{split}
&\lim_{N \to \infty} \frac{1}{c_N^{}} \widetilde \pi_{\textnormal{pure}}^{N} (\xi, \eta)
\\ & \quad =
\sum_{d = d_{\textnormal{min}}^{}}^{d_{\textnormal{max}}^{}}
\frac{1}{d!} \sum_{\substack{j_1^{},\ldots,j_d =1 \\ \textnormal{distinct} }}^\infty
\sum_{\hat \varrho: [\hat r] \hookrightarrow
\{0,1\} \times [d]}
\sum_{\check \varrho: [\check r] \hookrightarrow 
\{0,1\} \times [d]}
\int_{\Delta \setminus \{ \mathbf{0} \}}
\prod_{\ell \in [\hat r]} y_{\hat \varrho (\ell)^{}_1, j_{\hat \varrho(\ell)^{}_2}^{}}^{\hat k_\ell}
\prod_{\ell \in [\check r]} y_{\check \varrho (\ell)^{}_1, j_{\check \varrho(\ell)^{}_2}^{} }^{\check k_\ell}
\frac{2 \Xi' (\dd x )}{\langle x,x \rangle}\textbf{}.
\end{split}
\end{equation*}

The fact that $j_1^{},\ldots,j_d^{}$ are distinct and 
$\hat \varrho$
as well as $\check \varrho$ are injective means that 
the indices of the $y$s in both products respectively are pairwise distinct elements of
$\{0,1\} \times \N$. Conversely, for each such choice, there are $d!$ possible choices of $j_1^{},\ldots,j_d^{}$ and $\hat \varrho, \check \varrho$ (see above). This last expression is therefore equal to
\begin{equation*} 
\int_{\Delta \setminus \{ \mathbf{0}    \} }
\Big (
\sum_{\substack{(\hat c_1^{}, \hat \jmath_1^{}),\ldots,
(\hat c_{\hat r}^{}, \hat \jmath_{\hat r}^{}) \in  \{0,1 \} \times \N  \\ \textnormal{distinct} }}
\prod_{\ell \in [\hat r]}
y_{\hat c_\ell^{}, \hat \jmath_\ell^{}}^{\hat k_\ell}
\Big )
\Big (
\sum_{\substack{(\check c_1^{}, \check \jmath_1^{}),\ldots,
(\check c_{\check r}^{}, \check \jmath_{\check r}^{}) \in \{0,1 \} \times \N \\ \textnormal{distinct} }}
\prod_{\ell \in [ \check r]} 
y_{\check c_\ell^{}, \check \jmath_\ell^{}}^{\check k_\ell} 
\Big )
\frac{2 \Xi' (\dd x )}{\langle x,x \rangle}.
\end{equation*}
Recalling the definition of $\hat k$ and $\check k$ from the beginning of this subsection, their connection to
$\hat \xi, \check \xi, \hat \eta, \check \eta$, \eqref{eq:paintboxtransitionprime} for $s=0$
and \eqref{eq:phixrelabelling}
we see that the two brackets in the integral are equal 
(for fixed~$x$) to $p(\varphi(x); \hat \xi, \check \xi)$ and 
$p(\varphi(x);\hat \eta, \check \eta)$, respectively. We finally see that
\begin{equation*}
\begin{split}
&\lim_{N \to \infty} \frac{1}{c_N^{}} \widetilde \pi_{\textnormal{pure}}^{N} (\xi, \eta) \\
& \quad =
\int_{\Delta \setminus \{ \mathbf{0}    \} } 
p(\varphi(x);\hat \xi,\check \xi) 
p(\varphi(x);\hat \eta, \check \eta)
\frac{\Xi'(\dd x)}{\langle \varphi(x), \varphi(x) \rangle } 
= 
\int_{\Delta \setminus \{ \mathbf{0} \}} 
p(x;\hat \xi, \check \xi) p(x;\hat \eta, \check \eta)
\frac{\Xi(\dd x)}{\langle x, x \rangle }.
\end{split}
\end{equation*}
In this last step, we used that $\langle \varphi(x), \varphi(x) \rangle = \langle x,x \rangle /2$ and that $\Xi = \Xi' \circ \varphi^{-1}$.
But we have to keep in mind that we assumed $d>1$ and/or
$\widetilde K_1 > 2$.
A moment's thought reveals that
\begin{equation*} \1_{d=1, \widetilde K_1^{} = 2} 
2 \Xi'\big ({\bf 0}  \big )
=
 c_{\mathsf{pair}}^{} \1_{\hat \xi \vdash \hat \eta, \,
\check \xi = \check \eta} 
+
 c_{\mathsf{pair}}^{} \1_{\hat \xi = \hat \eta, \,
\check \xi \vdash \check \eta}.
\end{equation*}

We now tackle the case that $\dot k_j = 1$ for some 
$\dot \in \{\wedge, \vee\}$ and $j \in 1,\ldots,\dot r$, i.e. that there are blocks which do not merge. 

It is not hard to see that 
\eqref{eq:pitildepurewrittenout} holds
also in general (without assuming
$\hat s = \check s = 0$); we just need to replace the upper indices $\hat r$ and $\check r$ by 
$\hat r + \hat s$ and $\check r + \check s$, respectively, while also letting 
$\hat k_\ell \defeq 1$
and 
$\check k_\ell \defeq 1$
for $\ell \in \{\hat r + 1, \ldots, \hat r + \hat s \}$
and
$\ell \in \{\check r + 1, \ldots, \check r + \check s \}$,
respectively. We also extend the definition of the $K_\ell$ 
accordingly.

This also immediately shows that $\pi^{N}_{\mathsf{pure}}$ only depends on $\hat r$,$\hat s$,$\check r$,$\check s$ as well as
$\hat k_1,\ldots,\hat k_{\hat r}$ and 
$\check k_1,\ldots,\check k_{\check r}$, allowing us to write
\begin{equation*}
\widetilde \pi^{N}_{\mathsf{pure}} (\xi,\eta) 
=
\widetilde \pi^{N}_{\mathsf{pure}} 
\big (\hat r, \hat s, \check r, \check s;
\hat k_1,\ldots,\hat k_{\hat r};
\check k_1,\ldots,\check k_{\check r} 
\big).
\end{equation*}
or, alternatively,
\begin{equation*}
\widetilde \pi^{N}_{\mathsf{pure}} (\xi,\eta) 
=
\widetilde \pi^{N}_{\mathsf{pure}} 
\big (\hat r + \hat s, \check r + \check s;
\hat k_1,\ldots,\hat k_{\hat r + \hat s};
\check k_1,\ldots,\check k_{\check r + \check s} \big).
\end{equation*}

In this notation, we have the following asymptotic `consistency relation' as $N\to \infty$; see~\cite[p.~19]{BirknerEtAl2018}.
\begin{equation*}
\begin{split}
& \widetilde \pi^N_{\mathsf{pure}} 
\big ( \hat r + \hat s + 1, \check r + \check s;
\hat k_1,\ldots,\hat k_{\hat r + \hat s},1;
\check k_1,\ldots,\check k_{\check r + \check s} \big ) \\
& \quad \sim
\widetilde \pi^N_{\mathsf{pure}} 
\big ( \hat r + \hat s, \check r + \check s;
\hat k_1,\ldots,\hat k_{\hat r + \hat s};
\check k_1,\ldots,\check k_{\check r + \check s} \big ) \\
& \quad - 
\widetilde \pi^N_{\mathsf{pure}} 
\big ( \hat r + \hat s, \check r + \check s;
\hat k_1 + 1,\ldots,\hat k_{\hat r + \hat s};
\check k_1,\ldots,\check k_{\check r + \check s} \big ) \\
&\quad - \ldots \\
&\quad - 
\widetilde \pi^N_{\mathsf{pure}} 
\big ( \hat r + \hat s, \check r + \check s;
\hat k_1,\ldots,\hat k_{\hat r + \hat s} + 1;
\check k_1,\ldots,\check k_{\check r + \check s} \big ),
\end{split}
\end{equation*}
and similarly in the second component:
\begin{equation*}
\begin{split}
& \widetilde \pi^N_{\mathsf{pure}} 
\big ( \hat r + \hat s, \check r + \check s + 1;
\hat k_1,\ldots,\hat k_{\hat r + \hat s},1;
\check k_1,\ldots,\check k_{\check r + \check s} \big ) \\
& \quad \sim
\widetilde \pi^N_{\mathsf{pure}} 
\big ( \hat r + \hat s, \check r + \check s;
\hat k_1,\ldots,\hat k_{\hat r + \hat s};
\check k_1,\ldots,\check k_{\check r + \check s} \big ) \\
& \quad - 
\widetilde \pi^N_{\mathsf{pure}} 
\big ( \hat r + \hat s, \check r + \check s;
\hat k_1,\ldots,\hat k_{\hat r + \hat s};
\check k_1 + 1,\ldots,\check k_{\check r + \check s} \big ) \\
&\quad - \ldots \\
&\quad - 
\widetilde \pi^N_{\mathsf{pure}} 
\big ( \hat r + \hat s, \check r + \check s;
\hat k_1,\ldots,\hat k_{\hat r + \hat s};
\check k_1,\ldots,\check k_{\check r + \check s} + 1 \big ).
\end{split}
\end{equation*}
Note that an analogous recursion holds for the limiting rates
$q^{\mathsf{pure}}_{\xi, \eta}$. Using this, we can deduce the general case from the special case $\hat s = \check s = 0$.

To prove the recursion for 
$\widetilde \pi_{\mathsf{pure}}^N$, 
we slightly rewrite \eqref{eq:pitildepurewrittenout} (to be precise, its generalised version):
\begin{equation*}
\begin{split}
& \widetilde \pi^N_{\mathsf{pure}} 
\big ( \hat r + \hat s + 1, \check r + \check s;
\hat k_1,\ldots,\hat k_{\hat r + \hat s},1;
\check k_1,\ldots,\check k_{\check r + \check s} \big ) \\
& \quad  =
\sum_{\substack{  
(\hat c_1^{}, \hat \imath_1^{}), \ldots, 
(\hat c_{\hat r + \hat s + 1}^{}, \hat \imath_{\hat r + \hat s + 1}^{}) 
\in \{0,1 \} \times [N] \\
\textnormal{pairwise distinct}
}} \,
\sum_{\substack{  
(\check c_1^{}, \check \imath_1^{}), \ldots,
(\check c_{\check r + \check s}^{}, \check \imath_{\check r + \check s}^{}) 
\in \{ 0,1 \} \times [N] \\
\textnormal{pairwise distinct}
}} \\
& \quad \quad \quad \cdot
\frac{2^{-\hat b^{} - \check b^{} - 1}}{c_N^{} (N)_{\hat b^{} + \check b^{}+1 \downarrow}} \E
\Big [
\prod_{\ell \in \{ \hat \imath_1^{}, \ldots, \hat \imath_{\hat r + \hat s + 1}^{},
         \check \imath_1^{}, \ldots, \check \imath_{\check r + \check s}^{} 
\}  }
\big (  \widehat V_{\ell} \big )_{K_\ell \downarrow}
\Big ] \\
& \quad  =
\sum_{\substack{  
(\hat c_1^{}, \hat \imath_1^{}), \ldots,
(\hat c_{\hat r + \hat s}^{}, \hat \imath_{\hat r + \hat s }^{}) 
\in  \{0,1 \} \times [N] \\
\textnormal{pairwise distinct}
}} \,
\sum_{\substack{  
(\check c_1^{}, \check \imath_1^{}), \ldots, (\check c_{\check r + \check s}^{}, \check \imath_{\check r + \check s}^{}) 
\in \{ 0,1 \} \times [N]  \\
\textnormal{pairwise distinct}
}} \frac{2^{-\hat b^{} - \check b^{} - 1}}{c_N^{} (N)_{\hat b^{} + \check b^{}+1 \downarrow}} \\
& \quad \quad \quad \cdot
 \E
\Big [
\prod_{\ell \in \{ \hat \imath_1^{}, \ldots, \hat \imath_{\hat r + \hat s}^{},
         \check \imath_1^{}, \ldots, \check \imath_{\check r + \check s}^{} 
\}  }
\big (  \widehat V_{\ell} \big )_{K_\ell \downarrow}
\sum_{
\substack{
(c, i) 
\in \{0,1\} \times [N] \\
\textnormal{distinct from } \\
(\hat c_1^{}, \hat \imath_1^{}), \ldots, ( \hat c_{\hat r + \hat s}^{}, \hat \imath_{\hat r + \hat s}^{}) 
}
}
\big (  \widehat V_i^{} - K_i  \big )
\Big ] \\
& \quad  =
\sum_{\substack{  
(\hat c_1^{}, \hat \imath_1^{}), \ldots, (\hat \imath_{\hat c_{\hat r + \hat s}^{}, \hat r + \hat s }^{}) 
\in \{0,1\} \times [N] \\
\textnormal{pairwise distinct}
}} \,
\sum_{\substack{  
(\check c_1^{}, \check \imath_1^{}), \ldots, (\check c_{\check r + \check s}^{}, \check \imath_{\check r + \check s}^{}) 
\in \{ 0,1 \}  \times [N] \\
\textnormal{pairwise distinct}
}} \frac{2^{-\hat b^{} - \check b^{} - 1}}{c_N^{} (N)_{\hat b^{} + \check b^{}+1 \downarrow}} \\
& \quad \quad \quad \cdot
 \E
\Big [
\prod_{\ell \in \{ \hat \imath_1^{}, \ldots, \hat \imath_{\hat r + \hat s}^{},
        \check \imath_1^{}, \ldots, \check \imath_{\check r + \check s}^{}
\}  }
\big (  \widehat V_{\ell} \big )_{K_\ell \downarrow}
\big (
2N - \hat b^{} -  \check b^{} - (  \widehat V_{\hat \imath_1^{}} - K_{\hat \imath_1^{}}) - \ldots - ( \widehat V_{i_{\hat r + \hat s}^{}} - K_{i_{\hat r + \hat s}^{}}))
\big )
\Big ].
\end{split}
\end{equation*}

This shows the claim, keeping in mind that, for
$\ell \in \{ 
\hat \imath_1^{},\ldots,\hat \imath_{\hat r + \hat s}^{}
\}$,
incrementing $K_\ell$ is by \eqref{eq:Kelldef} equivalent to incrementing $\hat k_j$ for the unique $j$ such that $\hat \imath_j^{} = \ell$.
\end{proof}

\section{Proof of \eqref{eq:cpair.r1} and sufficient conditions for $c_{\mathsf{pair}}=1$}
\label{app:sufficient}

\begin{proof}
Assumption~\eqref{eq:PhiNconv} yields, for $\varepsilon > 0$ s.t.\ 
$\Xi'(\partial B_\varepsilon(\mathbf{0})) = 0$ and
with
$|| (V_1, V_2,\dots, V_N) ||_2^2 := \sum_{i=1}^N V_i^2$,
\begin{align*}
  & \frac{1}{4N^2}\frac1{2 c_N} \E^{(N)}\Big[ || (V_1, V_2,\dots, V_N) ||_2^2
  \1\big( || (V_1, V_2,\dots, V_N) ||_2^2 > 4\varepsilon^2 N^2 \big) \Big] \notag \\
  & = \frac1{8 N c_N} \E^{(N)}\Big[ V_1^2 \1\big( || (V_1, V_2,\dots, V_N) ||_2^2 > 4 \varepsilon^2 N^2 \big) \Big]
    \mathop{\longrightarrow}_{N\to\infty} \Xi'\big( \Delta \setminus B_\varepsilon(\mathbf{0}) \big).
\end{align*}
Furthermore, for $\varepsilon > 0$ and 
any $(V_1,\dots,V_N)$ with $|| (V_1, V_2,\dots, V_N) ||_2^2 \geqslant 4\varepsilon^2 N^2$, we have
\begin{align*}
& \sum_{i=1}^N V_i^2
  \geqslant \sum_{i=1}^N V_i(V_i-1)
  \geqslant \Big( 1 - \frac{1}{2\varepsilon^2 N} \Big)
  \sum_{i=1}^N V_i^2
\end{align*}
where we used that $\sum_{i=1}^N V_i= 2N$ in the second inequality. Therefore, using \eqref{eq:cN1},
\begin{align*}
  c_{\mathsf{pair}}
  & = 1 - \lim_{\varepsilon \downarrow 0}
  \lim_{N\to\infty} \frac1{8 N c_N} \E^{(N)}\Big[ V_1^2 \1\big( V_1^2 + V_2^2 +\cdots + V_N^2 > 4 \varepsilon^2 N^2 \big) \Big] \notag \\
  & = \lim_{\varepsilon \downarrow 0}
  \lim_{N\to\infty} \frac{\E^{(N)}[V_1(V_1-1)] - \E^{(N)}\Big[ V_1(V_1-1) \1\big( V_1^2 + V_2^2 +\cdots + V_N^2 > 4 \varepsilon^2 N^2 \big) \Big]}{\E^{(N)}[V_1(V_1-1)]}.
\end{align*}
This proves \eqref{eq:cpair.r1}.

Next, we find sufficient conditions under which $c_{\mathsf{pair}}=1$. Let $f_N(V):=\frac{\sum_{i=1}^NV_i^2}{\E^{(N)}[V_1^2]N}$ for all $N\in\mathbb{N}$. Then $\E^{(N)}[f_N(V)]=1$ and 
\begin{align}\label{E:1-cpair}
&\frac{\E^{(N)}\Big[ V_1^2 \1\big( V_1^2 + V_2^2 +\cdots + V_N^2 > 4 \varepsilon N^2 \big) \Big]}{\E^{(N)}[V_1^2]}
=  \E^{(N)}\left[ f_N(V) \1\left(f_N(V)> \frac{\varepsilon\,4N}{\E^{(N)}[V_1^2]} \right) \right].
\end{align}
Suppose 
$\big (f_N(V)\big )_{N\in\mathbb{N}}$ is uniformly integrable,
then \eqref{E:1-cpair} tends to zero as $N\to\infty$ for every $\varepsilon>0$, implying $c_{\mathsf{pair}}=1$ by \eqref{eq:cpair.r1}; recall that $c_N^{} \to 0$ and therefore $\lim_{N \to \infty} \E^{(N)}[V_1^2 ] / N = 0$.
Note that $f_N(V)\leqslant \frac{1}{4N}\sum_{i=1}^NV_i^2$ because 
$\E^{(N)}[V_1^2] \geqslant  
\E^{(N)} [V_1]^2 = 4$. 

Part (i) then follows because 
$\big (
\frac{1}{N}\sum_{i=1}^NV_i^2
\big )_{N\in\mathbb{N}}$ is uniformly integrable if the triangular array 
$(V_i^2)_{1\leqslant i\leqslant N,\,N\in\mathbb{N}}$ is uniformly integrable.
For part (ii), note  that \eqref{E:1-cpair} is bounded above by 
\begin{align*}
\frac{\E^{(N)}\Big[ V_1^{2p} \Big]^{\frac{1}{p}} \, \P^{(N)}\left(f_N(V)> \frac{\varepsilon\,4N}{\E^{(N)}[V_1^2]} \right)^{\frac{p-1}{p}}  }{
\E^{(N)}[V_1^2]}
\leqslant  \frac{\E^{(N)}\Big[ V_1^{2p} \Big]^{\frac{1}{p}} \, \big(\frac{1}{4N\varepsilon} \big)^{\frac{p-1}{p}}  }{\E^{(N)}[V_1^2]^{\frac{1}{p}}}
\leqslant  \frac{\E^{(N)}\Big[ V_1^{2p} \Big]^{\frac{1}{p}} \, \big(\frac{1}{4N\varepsilon} \big)^{\frac{p-1}{p}}  }{4^{\frac{1}{p}}},
\end{align*}
according to H\"older's inequality and the Markov inequality.
If $\lim_{N\to\infty}\frac{\E^{(N)}[V_1^{2p}]}{N^{p-1}}=0$ for some $p\in(1,\infty)$, then  \eqref{E:1-cpair} tends to zero as $N\to\infty$, for every $\varepsilon>0$.
\end{proof}

\bibliography{main.bib}

\end{document}